\newtheorem{theorem}{Theorem}[section]
\newtheorem{lemma}[theorem]{Lemma}
\newtheorem{corollary}[theorem]{Corollary}
\newtheorem{proposition}[theorem]{Proposition}
\theoremstyle{definition}
\newtheorem{definition}[theorem]{Definition}
\newtheorem{fact}[theorem]{Fact}
\newtheorem{claim}[theorem]{Claim}
\newtheorem{the main theorem}[theorem]{The Main Theorem}
\newtheorem*{notation}{Notation}
\newtheorem*{acknowledgement}{Acknowledgement}
\numberwithin{equation}{section}
\begin{document}
\title[]{Forcing a set model of $Z_3\, +$ Harrington's Principle}

\author{Yong Cheng}
\address{Institut f$\ddot{u}$r mathematische Logik und Grundlagenforschung, Fachbereich Mathematik und Informatik, Universit$\ddot{a}$t M$\ddot{u}$nster, Einsteinstr. 62, 48149  M$\ddot{u}$nster, Germany}

\email{world-cyr@hotmail.com}

\keywords{Harrington's Principle, strong reflecting property, remarkable cardinal,  Baumgartner's forcing.}
\subjclass[msc2010]{03E35, 03E55, 03E30}



\begin{abstract}
Let $Z_3$ denote $3^{rd}$ order arithmetic. Let Harrington's Principle, {\sf HP}, denote the statement that there is a real $x$ such that every $x$--admissible ordinal is a cardinal in $L$. In this paper, assuming  there exists a remarkable cardinal with a weakly inaccessible cardinal above  it, we force a set model of $Z_3\, + \, {\sf HP}$ via set forcing without reshaping.
\end{abstract}

\maketitle


\section{Introduction}

Harrington proved in 1978 the following classical theorem which stimulates the research on the relationship between large cardinals and  determinacy hypothesis since then.

\begin{theorem}\label{Martin-Harrinton theorem}
(Harrington, \cite{Harrington1}) \quad $(ZF) \quad  Det(\Sigma_1^1)$ implies  $0^{\sharp}$ exists.
\end{theorem}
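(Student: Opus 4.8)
The plan is to follow Harrington's route and split the implication into two parts. Write $\mathsf{HP}$ for Harrington's Principle — there is a real $x$ such that every $x$-admissible ordinal is a cardinal in $L$ — and prove (A) $Det(\Sigma^1_1)\Rightarrow\mathsf{HP}$, and (B) $\mathsf{HP}\Rightarrow 0^{\sharp}$ exists. Part (B) is the core-model half, where the actual consistency strength is extracted; part (A) is the only place analytic determinacy enters.

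For (B): fix $x$ witnessing $\mathsf{HP}$ and assume toward a contradiction that $0^{\sharp}$ does not exist. Pick a singular cardinal $\kappa$ (say $\kappa=\aleph_{\omega}$, or $\aleph_{\omega_1}$ for extra safety) and let $\alpha$ be the least $x$-admissible ordinal $>\kappa$; the standard L\"owenheim--Skolem bound for admissible sets gives $\kappa<\alpha<\kappa^{+}$, since the least admissible set having $\kappa$ and $x$ as members has cardinality $\kappa$. By $\mathsf{HP}$, $\alpha$ is a cardinal of $L$, and since $\kappa$ is also a cardinal of $L$ this forces $(\kappa^{+})^{L}\le\alpha<\kappa^{+}$. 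But Jensen's covering lemma, under $\neg 0^{\sharp}$, yields $(\kappa^{+})^{L}=\kappa^{+}$ for every singular $\kappa$ — a contradiction, so $0^{\sharp}$ exists. (To stay closer to the bare ZF statement one can replace the appeal to covering by a direct Silver-style extraction of an uncountable set of order-indiscernibles for $L$ from the witness $x$.)

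For (A): I would use the game $G$ in which Player I controls the even positions of the play, producing a real $x$, Player II controls the odd positions, producing a real $y$, and II wins iff $y$ codes a tuple — a well-order $R$ of some type $\alpha$, a well-order $S$ of some type $\beta<\alpha$, a surjection $f\colon\beta\to\alpha$, a well-order $T$ of some type $\gamma\ge\alpha$, and a set $E$ presenting $L_{\gamma}$ as a well-founded model — such that $f\in L_{\gamma}$ (as read off $E$) and $\alpha$ is $x$-admissible, i.e.\ $L_{\alpha}[x]\models\mathrm{KP}$ (a condition hyperarithmetic in $x$ and $R$, granting that the codes are well-founded). Certifying ``$f\in L$'' by the player-supplied code $E$, rather than by the $\Sigma^1_2$ statement $f\in L$, keeps II's payoff $\Pi^1_1$, so $G$ is determined by $Det(\Sigma^1_1)$ (equivalently $Det(\Pi^1_1)$). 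If II had a winning strategy $\tau$ (a real), let Player I play $\tau$ itself; then II's reply $y$ is recursive in $\tau$, so the component $R$ lies in $L_{\alpha}[\tau]$ — here $\alpha$, being $\tau$-admissible and not an $L$-cardinal, is well above $\omega$ — while $\alpha$ is $\tau$-admissible, so $L_{\alpha}[\tau]\models\mathrm{KP}$ can form the Mostowski collapse of $R$, forcing the ordinal $\alpha\in L_{\alpha}[\tau]$, which is absurd. Hence Player I has a winning strategy $\sigma$, and the remaining task is to read off from $\sigma$ one real witnessing $\mathsf{HP}$: a L\"owenheim--Skolem argument first reduces ``every $x$-admissible ordinal is a cardinal of $L$'' to its restriction to \emph{countable} admissibles (collapse any offending $f\in L_{\gamma}$ together with $\alpha$ to countable objects), so it suffices to defeat II's countable data; then Steel's forcing for coding a set of ordinals into a real \emph{while preserving admissibility} is used to run $\sigma$ against a sufficiently generic II-play, the admissibility-preservation being precisely what reconciles the stage-by-stage interdependence of $x$ and $y$ and isolates a real $x^{*}$ none of whose countable admissibles can fail to be a cardinal of $L$.

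The hard part will be this last step. Naively, Player I's winning strategy only tells us that for each $y$, that particular $y$ fails to code bad data for $\sigma\ast y$, whereas $\mathsf{HP}$ needs a single fixed $x^{*}$ that defeats all bad data at once; closing this gap is exactly what forces in the coding forcing, and the delicate verification is that the forcing creates no spurious admissible ordinals, so that genericity transports the strategy's success to every relevant $\alpha$. A secondary nuisance, already flagged, is keeping the payoff inside $\Sigma^1_1$ while still transmitting enough set theory — which is why the membership $f\in L$ is certified by a supplied well-founded code rather than simply asserted.
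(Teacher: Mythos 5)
Your decomposition is exactly the one the paper itself records as ``the known proofs'' of this theorem: first $Det(\Sigma^1_1)\Rightarrow{\sf HP}$, then ${\sf HP}\Rightarrow 0^{\sharp}$ exists (the latter being Silver's Theorem~\ref{Silver's theorem}). The paper does not reprove Harrington's theorem --- it only cites \cite{Harrington1} and states this two-step strategy --- so there is no detailed in-paper argument to compare against. Your part (B) is fine as a ZFC argument: the least $x$-admissible $\alpha$ above a singular $\kappa$ satisfies $\kappa<\alpha<\kappa^{+}$, so ${\sf HP}$ gives $(\kappa^{+})^{L}\leq\alpha<\kappa^{+}$, contradicting Jensen covering under $\neg 0^{\sharp}$ (for the bare ZF statement one does need the Silver-style indiscernible extraction you mention, since covering is a ZFC result). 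Your game in part (A) is set up correctly: having Player II certify $f\in L$ and the admissibility of $\alpha$ by supplied well-founded codes keeps II's payoff $\Pi^1_1$, and the diagonalization ruling out a winning strategy $\tau$ for II (play $x=\tau$, collapse the supplied well-order inside the admissible set $L_{\alpha}[\tau]$) is the standard and correct argument.

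The genuine gap is the step you yourself flag: passing from ``Player I has a winning strategy $\sigma$'' to a \emph{single} real $x^{*}$ all of whose admissibles are $L$-cardinals. This is not a finishing touch; it is essentially the entire content of Harrington's theorem beyond Silver's, and naming ``Steel's forcing preserving admissibility'' does not discharge it. What is missing is: (i) the precise tagged-tree forcing over an appropriate $L_{\gamma}$ that generically codes a collapsing function $f\in L$ of a non-$L$-cardinal admissible $\alpha$ into a legal play $y$ for II; (ii) the retagging/homogeneity lemmas showing this forcing adds no new admissible ordinals in the relevant range, so that $\alpha$ remains admissible relative to the real produced by the interactive play $\sigma\ast y$ (this is the point where the stage-by-stage dependence of I's real on II's moves must be controlled, since a priori $\sigma$ only refutes each particular $y$ relative to the real $\sigma\ast y$, not relative to one fixed $x^{*}$); and (iii) the conclusion that such a generic $y$ would defeat $\sigma$, contradicting that $\sigma$ is winning, whence every countable $\sigma$-admissible is an $L$-cardinal, and the L\"owenheim--Skolem reflection you describe lifts this to all admissibles. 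As written, the proposal is a correct outline with the decisive lemma asserted rather than proved.
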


\begin{definition}
Let Harrington's Principle, {\sf HP} for short, denote the following statement: $\exists x\in 2^{\omega}\forall \alpha(\alpha$ is $x$-admissible $\rightarrow \alpha$ is an $L$-cardinal).
\end{definition}

\begin{theorem}\label{Silver's theorem}
(Silver, \cite{Harrington1}) \quad (ZF) \quad {\sf HP} implies $0^{\sharp}$ exists.
\end{theorem}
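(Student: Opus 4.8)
The plan is to prove the contrapositive with the help of Jensen's Covering Lemma: assuming $0^{\sharp}$ does not exist, I will exhibit, for an arbitrary real $x$, an $x$-admissible ordinal that is not a cardinal of $L$, so that {\sf HP} fails. Since the Covering Lemma uses AC, the first move is to relativize to $L[x]$. Fix $x$ and pass to $L[x]\models\,$ZFC. Because ``$0^{\sharp}$ exists'' is a $\Sigma^1_3$ statement whose witness is a $\Pi^1_2$-singleton, Shoenfield absoluteness shows that if it held in $L[x]$ it would hold in $V$; hence $0^{\sharp}$ fails in $L[x]$ too. Moreover ``$\alpha$ is $x$-admissible'', i.e.\ $L_\alpha[x]\models\,$KP, and ``$\alpha$ is a cardinal of $L$'' are absolute between $V$ and $L[x]$, since $L_\alpha[x]$ and $L$ are computed the same way in each. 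So it is enough to produce, inside $L[x]$, an $x$-admissible ordinal that is not an $L$-cardinal; that is, we may assume $V=L[x]$, and hence AC together with the existence of $\aleph_\omega$ and $\aleph_{\omega+1}$.

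The key local fact is that for every infinite cardinal $\kappa$ there is an $x$-admissible ordinal strictly between $\kappa$ and $\kappa^+$. Since $\kappa^+$ is regular, $L_{\kappa^+}[x]$ is admissible; using the definable well-ordering of $L[x]$ to supply Skolem functions, choose $M\prec L_{\kappa^+}[x]$ with $\kappa\cup\{\kappa\}\subseteq M$ and $\abs{M}=\kappa$. Admissibility is given by a $\Pi_2$ scheme, so it passes to elementary substructures and survives the transitive collapse; thus the collapse of $M$ is some $L_\gamma[x]\models\,$KP with $\kappa<\gamma<\kappa^+$, the upper bound because $\abs{M}=\kappa$ and the lower bound because $\kappa+1\subseteq M$ and $M$ also contains, for instance, all the ordinals $\kappa+n$ $(n<\omega)$, which are definable from $\kappa$. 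Hence the least $x$-admissible ordinal above $\kappa$ is at most $\gamma$ and therefore lies in $(\kappa,\kappa^+)$.

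Apply this with $\kappa=\aleph_\omega$, which is singular. As $0^{\sharp}$ does not exist, the Covering Lemma holds for $L$, so $(\aleph_\omega^+)^L=\aleph_\omega^+=\aleph_{\omega+1}$, and $\aleph_\omega$ is a cardinal of $L$ because $L\subseteq V$. Thus $(\aleph_\omega,\aleph_{\omega+1})$ lies strictly between two consecutive cardinals of $L$ and so contains no cardinal of $L$, whereas by the previous paragraph it contains an $x$-admissible ordinal $\alpha$. This $\alpha$ is $x$-admissible but not an $L$-cardinal, and since $x$ was arbitrary, {\sf HP} fails. Contrapositively, {\sf HP} implies that $0^{\sharp}$ exists.

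I expect the main obstacle to be exactly the passage from ZF to ZFC needed to invoke the Covering Lemma; this is what the reduction to $L[x]$ handles, and the one point requiring care there is the absoluteness of ``$0^{\sharp}$ exists'' (Shoenfield absoluteness plus the fact that $0^{\sharp}$ is a $\Pi^1_2$-singleton). The absoluteness of $L_\alpha[x]\models\,$KP and of ``$\alpha$ is a cardinal of $L$'', the Skolem-hull estimate, and the standard consequence of covering for successors of singular cardinals are all routine.
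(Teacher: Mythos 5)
The paper states this theorem without proof, citing Silver via Harrington's article, so there is no internal argument to compare against; judged on its own, your proof is correct. It is, however, a genuinely different route from Silver's original one (which is what the citation points to). You prove the contrapositive by importing Jensen's Covering Lemma: assuming $\neg 0^{\sharp}$ you pass to $L[x]$ (where ZFC holds and $\neg 0^{\sharp}$ persists, since the $\Sigma^1_3$ assertion ``$0^{\sharp}$ exists'' is upward absolute from inner models by Shoenfield applied to its $\Pi^1_2$ iterability clause), use covering to get $(\aleph_{\omega}^{+})^{L}=\aleph_{\omega}^{+}$ there, and locate an $x$-admissible ordinal in the $L$-cardinal-free interval $(\aleph_{\omega},\aleph_{\omega}^{+})$ by collapsing a Skolem hull of $L_{\aleph_{\omega}^{+}}[x]$; the absoluteness of $x$-admissibility and of being an $L$-cardinal then transfers the counterexample back to $V$, and since $x$ was arbitrary {\sf HP} fails. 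All of these steps are sound. Silver's argument is direct rather than contrapositive: from the hypothesis that every $x$-admissible ordinal is an $L$-cardinal one extracts, from the club of $x$-admissible (and stable) ordinals below $\omega_1$, a set of indiscernibles for $L$ and hence $0^{\sharp}$ itself. Each approach buys something. Yours is short modulo one large black box (the Covering Lemma) and yields only a contradiction, not the sharp. Silver's is self-contained relative to an analysis of admissibility and, importantly for this paper's theme, localizes to $Z_4$ (it needs only $\omega_1$ and $H_{\omega_2}$), whereas your argument requires $\aleph_{\omega+1}$ to exist and therefore would not by itself establish the paper's remark that $Z_4\,+\,{\sf HP}$ implies $0^{\sharp}$ exists. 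For the theorem as stated over ZF, though, your proof is complete.
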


\begin{definition}
\begin{enumerate}[(i)]
  \item $Z_{2}= ZFC^{-} +$ Any set is Countable.\footnote{$ZFC^{-}$ denotes $ZFC$  with the Power Set Axiom deleted and Collection instead of Replacement.}
  \item $Z_{3}= ZFC^{-} + \mathcal{P}(\omega)$ exists + Any set is of cardinality $\leq \beth_1$.
      \item $Z_4=ZFC^{-}+ \mathcal{P}(\mathcal{P}(\omega))$ exists + Any set is of cardinality $\leq \beth_2$.
\end{enumerate}
\end{definition}

$Z_2, Z_3$ and $Z_4$ are the corresponding axiomatic systems for Second Order Arithmetic (SOA), Third Order Arithmetic and Fourth Order Arithmetic. Note that $Z_3\vdash H_{\omega_1}\models Z_2, Z_4\vdash H_{\beth_1^{+}}\models Z_3$ and $``\exists A\subseteq\omega_1(V=L[A]) \, + \, Z_3"\vdash\omega_1$ is the largest cardinal.

The known proofs of Theorem \ref{Martin-Harrinton theorem} are done in two steps: first show that $Det(\Sigma_1^1)$ implies  {\sf HP} and then show that {\sf HP} implies $0^{\sharp}$ exists. We observe that the first step is provable in  $Z_2$. For the proof of $``Z_2\, +\, Det(\Sigma_1^1)$ implies  {\sf HP}", see \cite{YongCheng}. In this paper, we aim to prove the following main theorem.

\begin{the main theorem}\label{the main thm of the thesis}
(Set forcing) \quad Assuming there exists a remarkable cardinal with a weakly inaccessible cardinal above it, we can force a model of $Z_3\, + \, {\sf HP}$.
\end{the main theorem}

As a corollary, $Z_3\, + \,  {\sf HP}$ does not imply $0^{\sharp}$ exists. But $Z_4\, + \, {\sf HP}$ implies $0^{\sharp}$ exists which we construe as part of the folklore, cf.\cite{Harrington1}. So $Z_4$ is the minimal system in higher order arithmetic to show that {\sf HP} implies  $0^{\sharp}$ exists.
The Main Theorem \ref{the main thm of the thesis} is proved via set forcing and we do not use the reshaping technique. 

The history of the main result in this paper is as follows: The theorem ``$Z_3\, + \,  {\sf HP}$ does not imply $0^{\sharp}$ exists" was first proved in \cite{YongCheng}. Results in \cite{YongCheng} are proved via set forcing and we do not use the reshaping technique. However, the large cardinal strength of ``$Z_3\, + \,  {\sf HP}$" is not discussed in \cite{YongCheng}. In latter joint work with Ralf Schindler in \cite{jointpaperwithralf}, we compute the exact  large cardinal strength of ``$Z_3\, + \,  {\sf HP}$". Results in  \cite{jointpaperwithralf} are proved via class forcing. In \cite[Theorem 3.2]{jointpaperwithralf}, assuming there is one remarkable cardinal, we force via class forcing a class model of $Z_3\, + \,  {\sf HP}$ using the reshaping technique. The proof of The Main Theorem \ref{the main thm of the thesis} in this paper is based on \cite{YongCheng} and we improve the presentation in \cite{YongCheng} by computing the upper bound of the large cardinal hypothesis used in Step One in Section \ref{first secton step one} via the notion of remarkable cardinal which is much weaker than the large cardinal hypothesis used in \cite{YongCheng}. 
 

\section{Definitions and preliminaries}\label{forcing background}

Our definitions and notations are standard. We refer to standard textbooks as \cite{Jech}, \cite{Higherinfinite} and \cite{Kunen1} for the definitions and notations we use. For the definition of admissible set and admissible ordinal, see \cite{Barwise} and \cite{Constructiblity}. For notions of large cardinals, see \cite{Higherinfinite}. Our notations about forcing are standard (see \cite{Jech2} and \cite{Jech}). Almost disjoint forcing is standard(see \cite{Jech} and \cite{Kunen1}). We say that $0^{\sharp}$ exists if there exists an iterable premouse of the form $(J_{\alpha}, \in, U)$ where $U\neq\emptyset$. For the  theory of $0^{\sharp}$ see e.g.\
\cite{schindler}. We can define $0^{\sharp}$ in $Z_2$.  In $Z_2, 0^{\sharp}$ exists if and only if $\exists x\in \omega^{\omega}(x$ codes a countable iterable premouse) which is a $\Sigma^1_3$ statement.

Note that under $V=L, H_{\eta}=L_{\eta}$ for any $L$-cardinal $\eta$. In this paper, we often use $H_{\eta}$ and $L_{\eta}$ interchangeably. Throughout this paper whenever we write $X\prec H_{\kappa}$ and $\gamma\in X$, $\bar{\gamma}$ always denotes the image of $\gamma$ under the transitive collapse of $X$.

\begin{definition}
(Ralf Schindler, \cite{Schindler2})
\begin{enumerate}[(i)]
  \item A cardinal $\kappa$ is remarkable if and only if for all regular cardinal $\theta>\kappa$ there are $\pi, M, \bar{\kappa}, \sigma, N$ and $\bar{\theta}$ such that the following hold: $\pi: M\rightarrow H_{\theta}$ is an elementary embedding, $M$ is countable and transitive, $\pi(\bar{\kappa})=\kappa$, $\sigma: M\rightarrow N$ is an elementary embedding with critical point $\bar{\kappa}$, $N$ is countable and transitive, $\bar{\theta}=M\cap Ord$ is a regular cardinal in $N, \sigma(\bar{\kappa})>\bar{\theta}$ and $M=H_{\bar{\theta}}^{N}$, i.e. $M\in N$ and $N\models M$ is the set of all sets which are hereditarily smaller than $\bar{\theta}$.
  \item Let $\kappa$ be a cardinal, $G$ be $Col(\omega, <\kappa)$-generic over $V$, $\theta>\kappa$ be a regular cardinal and $X\in [H_{\theta}^{V[G]}]^{\omega}$. We say that $X$ condense remarkably if $X=ran(\pi)$ for some elementary $\pi: (H_{\beta}^{V[G\cap H_{\alpha}^{V}]}, \in, H_{\beta}^{V}, G\cap H_{\alpha}^{V})\rightarrow (H_{\theta}^{V[G]}, \in, H_{\theta}^{V}, G)$ where $\alpha=crit(\pi)<\beta<\kappa$ and $\beta$ is a regular cardinal in $V$.
\end{enumerate}
\end{definition}

\begin{lemma}\label{key lemma on remarkable cn}
(Ralf Schindler, \cite{Schindler2}) \quad A cardinal $\kappa$ is remarkable if and only if  for all regular cardinal $\theta>\kappa$ we have $\Vdash^{V}_{Col(\omega, <\kappa)} ``\{X\in [H_{\check{\theta}}^{V[\dot{G}]}]^{\omega}: X$  condense remarkably\} is stationary''.
\end{lemma}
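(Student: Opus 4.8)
The plan is to prove the two implications separately, in each case reducing ``stationary'' to a statement about dense sets: $Col(\omega,<\kappa)$ forces a set $S\subseteq[H_\theta^{V[\dot G]}]^\omega$ to be stationary iff for every $Col(\omega,<\kappa)$-name $\dot F$ for a function $[H_\theta^{V[\dot G]}]^{<\omega}\to H_\theta^{V[\dot G]}$ and every condition $p$ there is $q\le p$ forcing ``$\exists X\in S$ closed under $\dot F$'' (a club in $[H_\theta]^\omega$ always contains the closure points of such an $F$, and the displayed existential statement is inherited by every $q'\le q$, hence, if forced by a dense set of conditions, forced outright). Below, $S$ denotes the set of $X$ that condense remarkably; recall also that a remarkable cardinal is inaccessible.

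$(\Leftarrow)$ Fix a regular $\theta>\kappa$. Fix a large regular $\Theta$ and a countable $Y\prec H_\Theta^V$ with $\theta,\kappa\in Y$; let $\pi_0:\bar M\to H_\Theta^V$ be the inverse transitive collapse, $\bar\kappa=\pi_0^{-1}(\kappa)$, $\bar\theta^-=\pi_0^{-1}(\theta)$. For $\Theta$ large, ``$\Vdash_{Col(\omega,<\kappa)}S$ is stationary in $[H_{\check\theta}^{V[\dot G]}]^\omega$'' sits low in the hierarchy above $\theta$, hence is true in $H_\Theta^V$; pulling it through $\pi_0$ gives $\bar M\models$``$\Vdash_{Col(\omega,<\bar\kappa)}\bar S$ is stationary'' for $\bar\theta^-$, where $\bar S$ is the set of $X$ that $\bar M$ thinks condense remarkably. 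As $\bar M$ is countable, pick $g$ that is $Col(\omega,<\bar\kappa)^{\bar M}$-generic over $\bar M$; then $\bar S\ne\emptyset$ in $\bar M[g]$, so fix $X=ran(\pi')$ for an elementary $\pi':(H_\beta^{\bar M[g_\alpha]},\in,H_\beta^{\bar M},g_\alpha)\to(H_{\bar\theta^-}^{\bar M[g]},\in,H_{\bar\theta^-}^{\bar M},g)$ with $\alpha=crit(\pi')<\beta<\bar\kappa$, $g_\alpha=g\cap H_\alpha^{\bar M}$, and $\beta$ regular in $\bar M$. Now put $M:=H_\beta^{\bar M}$, $N:=H_{\bar\theta^-}^{\bar M}$, $\sigma:=\pi'\restriction H_\beta^{\bar M}:M\to N$, $\pi:=(\pi_0\restriction H_{\bar\theta^-}^{\bar M})\circ\sigma:M\to H_\theta^V$, and take the ``$\bar\kappa$, $\bar\theta$'' of the definition to be $\alpha$, $\beta=Ord\cap M$. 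Then $M,N$ are countable transitive; $\pi,\sigma$ are elementary (restricting a predicate-expanded elementary embedding to one of its predicates stays elementary); $crit(\sigma)=\alpha$; $M=H_\beta^N$ as $\beta<\bar\theta^-$; $\beta$ is regular in $N$ as it is in $\bar M$; and, since $\alpha$ is definable in the source structure from the predicate $g_\alpha$ (as the supremum of ordinals occurring in conditions of $g_\alpha$) and $\bar\kappa$ is likewise definable from $g$ in the target, $\pi'(\alpha)=\bar\kappa$, whence $\sigma(\alpha)=\bar\kappa>\beta$ and $\pi(\alpha)=\pi_0(\bar\kappa)=\kappa$. All clauses hold, so $\kappa$ is remarkable.

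$(\Rightarrow)$ Fix a regular $\theta>\kappa$, a name $\dot F$ for a function $[H_\theta^{V[\dot G]}]^{<\omega}\to H_\theta^{V[\dot G]}$, and a condition $p$; by the reduction it suffices to find $q\le p$ forcing ``$\exists$ remarkably condensing $X$ closed under $\dot F$''. Apply remarkability at a large regular $\Theta$: get elementary $\pi:M\to H_\Theta^V$ with $M$ countable transitive, $\pi(\bar\kappa)=\kappa$, elementary $\sigma:M\to N$ with $crit(\sigma)=\bar\kappa$, $N$ countable transitive, $\bar\theta=Ord\cap M$ regular in $N$, $\sigma(\bar\kappa)>\bar\theta$, $M=H_{\bar\theta}^N$; by a routine argument arrange $\theta,\dot F,p\in ran(\pi)$, with preimages $\bar\theta_0,\bar{\dot F},\bar p$ (so in $M$: $\bar\kappa$ is inaccessible, $\bar\kappa<\bar\theta_0<\bar\theta$, and $\bar p\Vdash$``$\bar{\dot F}$ is a total function on $[H_{\bar\theta_0}^{M[\dot g]}]^{<\omega}$''). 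Since $Col(\omega,<\bar\kappa)^M=Col(\omega,<\bar\kappa)^N$ has size $\bar\kappa<\bar\theta$ and $M=H_{\bar\theta}^N$, every dense subset of it in $N$ is already in $M$; so, choosing $g$ that is $Col(\omega,<\bar\kappa)^M$-generic over $M$ with $\bar p\in g$, $g$ is also generic over $N$, and, by the product decomposition $Col(\omega,<\sigma(\bar\kappa))^N\cong Col(\omega,<\bar\kappa)^N\times Col(\omega,[\bar\kappa,\sigma(\bar\kappa)))^N$ and the countability of $N[g]$, we extend $g$ to $G$ that is $Col(\omega,<\sigma(\bar\kappa))^N$-generic over $N$. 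Lift $\sigma$ to an elementary $\sigma^+:M[g]\to N[G]$ with $\sigma^+(g)=G$ and $crit(\sigma^+)=\bar\kappa$, and set $X:=\sigma^+[H_{\bar\theta_0}^{M[g]}]$. Then $X$ is closed under $\sigma(\bar{\dot F})_G$ because $\sigma^+$ sends $\bar{\dot F}_g$ to it, and $\psi:=\sigma^+\restriction H_{\bar\theta_0}^{M[g]}$ witnesses that $X$ condenses remarkably in $N[G]$ with respect to $\sigma(\bar\kappa)$: here $H_{\bar\theta_0}^{M[g]}=H_{\bar\theta_0}^{N[g]}$ and $g=G\cap H_{\bar\kappa}^N$ (both from $M=H_{\bar\theta}^N$ and $\bar\kappa<\bar\theta_0<\bar\theta$), $crit(\psi)=\bar\kappa<\bar\theta_0<\sigma(\bar\kappa)$, and $\bar\theta_0$ is regular in $N$ (cofinal maps below $\bar\theta_0$ into $\bar\theta_0$ lying in $N$ have size $<\bar\theta$, hence lie in $M$). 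Thus $N[G]\models$``$\exists$ remarkably condensing $X$ closed under $\sigma(\bar{\dot F})_G$''; since $\bar p\in G$, there is $q\le\bar p$ in $Col(\omega,<\sigma(\bar\kappa))^N$ forcing this over $N$. Pulling this back along $\sigma$, then along $\pi$, and using that ``condenses remarkably'' is absolute between $H_\Theta$ and $V$ for $\Theta$ large, we obtain $q^*\le p$ in $Col(\omega,<\kappa)$ forcing ``$\exists$ remarkably condensing $X$ closed under $\dot F$'' in $V$, as required.

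The direction I expect to be hardest is $(\Rightarrow)$, and within it the verification that $\psi=\sigma^+\restriction H_{\bar\theta_0}^{M[g]}$ really witnesses ``$X$ condenses remarkably'': one must check that $g$ is precisely the restriction of $G$ to $H_{\bar\kappa}^N$, that $\bar\theta_0$ stays a regular cardinal in $N$ (not merely in $M$) and $H_{\bar\theta_0}^{M[g]}=H_{\bar\theta_0}^{N[g]}$, and that the lift still has critical point $\bar\kappa$ --- all flowing from the identity $M=H_{\bar\theta}^N$ with $\bar\theta$ regular in $N$ together with the smallness of $Col(\omega,<\bar\kappa)$ relative to $\bar\theta_0$. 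Secondary care is needed for the absoluteness of ``condenses remarkably'' between $H_\Theta$ and $V$ for large $\Theta$, for the routine arrangement $\theta,\dot F,p\in ran(\pi)$, and, in $(\Leftarrow)$, for the definability of $\alpha$ from $g_\alpha$ that pins down $\pi'(\alpha)=\bar\kappa$.
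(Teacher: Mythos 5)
The paper does not prove this lemma at all: it is quoted from Schindler's \emph{Proper forcing and remarkable cardinals II} and used as a black box, so there is no in-paper argument to compare against. Your proof is, in essence, a correct reconstruction of the standard argument from that source: the $(\Leftarrow)$ direction via reflection to a countable $Y\prec H_{\Theta}$ and reading off a remarkability witness from a single remarkably condensing $X$ in a generic extension of the collapse, and the $(\Rightarrow)$ direction via lifting $\sigma$ to $\sigma^{+}:M[g]\to N[G]$ along the small forcing and checking that $\sigma^{+}\upharpoonright H_{\bar\theta_0}^{M[g]}$ witnesses remarkable condensation; your verifications of the delicate points ($g=G\cap H_{\bar\kappa}^{N}$, regularity of $\bar\theta_0$ in $N$, $H_{\bar\theta_0}^{M[g]}=H_{\bar\theta_0}^{N[g]}$, and $\pi'(\alpha)=\bar\kappa$ via definability of $\alpha$ from the predicate $g_{\alpha}$) are the right ones and are sound. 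One point deserves sharper phrasing: the bare definition of remarkability only asserts the \emph{existence} of some witness $\pi$ at each regular $\Theta$, so you cannot literally ``arrange $\theta,\dot F,p\in ran(\pi)$'' for a prescribed triple; the routine fix is to argue by contradiction, letting elementarity of $ran(\pi)\prec H_{\Theta}$ produce \emph{some} counterexample triple inside $ran(\pi)$ (equivalently, to first prove the standard lemma that the set of $ran(\pi)$ for remarkability witnesses is stationary in $[H_{\Theta}]^{\omega}$). With that gloss made explicit, the argument goes through.
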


\begin{lemma}\label{key lemma on relationship between remark and wfp}
Suppose $\kappa$ is an $L$-cardinal. The followings are equivalent:
\begin{enumerate}[(a)]
  \item $\kappa$ is remarkable in $L$;
  \item  If $\gamma\geq\kappa$ is an $L$-cardinal, $\theta>\gamma$ is  a regular cardinal in $L$, then $\Vdash^{L}_{Col(\omega, <\kappa)} ``\{X| X\prec L_{\check{\theta}}[\dot{G}], |X|=\omega\wedge \check{\gamma}\in X\wedge \bar{\check{\gamma}}$ is an $L$-cardinal\} is stationary".
\end{enumerate}
\end{lemma}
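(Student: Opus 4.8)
The plan is to derive the equivalence by transferring between the "condense remarkably" formulation (Lemma \ref{key lemma on remarkable cn}) and the a priori weaker "collapse to an $L$-cardinal" formulation, working entirely inside $L$ so that $H_\eta = L_\eta$ for every $L$-cardinal $\eta$. The direction (a) $\Rightarrow$ (b) should be the easy one. Assume $\kappa$ is remarkable in $L$. Fix an $L$-cardinal $\gamma \geq \kappa$ and a regular (in $L$) cardinal $\theta > \gamma$; work in $L$ and pass to the $Col(\omega,<\kappa)$-extension $L[G]$. By Lemma \ref{key lemma on remarkable cn} the set of $X \in [H_\theta^{L[G]}]^\omega$ that condense remarkably is stationary. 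Given such an $X$, write $X = \mathrm{ran}(\pi)$ for some elementary $\pi\colon (H_\beta^{L[G\cap H_\alpha^L]}, \in, H_\beta^L, G\cap H_\alpha^L) \to (H_\theta^{L[G]}, \in, H_\theta^L, G)$ with $\alpha = \mathrm{crit}(\pi) < \beta < \kappa$ and $\beta$ regular in $L$. The point is that $\pi$ restricts to an elementary map of $H_\theta^L = L_\theta$ into itself with $\mathrm{ran}(\pi) \cap L_\theta = X \cap L_\theta$, so the transitive collapse of $X \cap L_\theta$ is $H_\beta^L = L_\beta$, and hence for $\gamma \in X$ the collapse image $\bar\gamma$ is simply $\pi^{-1}(\gamma)$, which is a cardinal of $L_\beta$, hence an $L$-cardinal (as $\beta$ is regular in $L$, $L_\beta$ computes cardinals correctly below $\beta$). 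One then needs to massage "$X$ condenses remarkably" into "$X \prec L_\theta[G]$, $|X| = \omega$, $\gamma \in X$, $\bar\gamma$ an $L$-cardinal": the first three conjuncts are immediate from the shape of $\pi$ and a density/genericity argument ensuring we may take $\gamma \in X$ (intersect with the club of $X$ containing $\gamma$), and the last was just established. Since a superset of a stationary set is stationary and the relevant requirements cut down a stationary set to a stationary set, (b) follows.

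For (b) $\Rightarrow$ (a), I would argue contrapositively or directly recover the "condense remarkably" conclusion from the weaker hypothesis, and this is where the real work lies. Assume (b). Fix a regular $\theta > \kappa$ in $L$ and force with $Col(\omega,<\kappa)$ to get $L[G]$; we must show $\{X \in [H_\theta^{L[G]}]^\omega : X \text{ condenses remarkably}\}$ is stationary. The hypothesis (b), applied with $\gamma$ ranging over $L$-cardinals in $[\kappa,\theta)$, gives us stationarily many countable $X \prec L_\theta[G]$ whose transitive collapse sends the chosen $\gamma$ to an $L$-cardinal $\bar\gamma$. Let $\pi\colon \bar N \to L_\theta[G]$ be the inverse collapse, $\bar N$ transitive and countable. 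The crux is to show $\pi$ has the form required by "condenses remarkably": that is, $\bar N = H_{\bar\theta}^{L[\bar G]}$ where $\bar\theta = \bar N \cap \mathrm{Ord}$ is regular in $L$, $\bar G = G \cap H_\alpha^L$ for $\alpha = \mathrm{crit}(\pi)$, and $\alpha < \bar\theta < \kappa$ with $\bar\theta$ the collapse of a large enough $\gamma$. Here one exploits that $L_\theta[G]$ knows it is a $Col(\omega,<\kappa)$-extension of $L_\theta$: the condensation $\bar N$ is a $Col(\omega,<\alpha)$-extension (for $\alpha = \mathrm{crit}(\pi)$) of the $L$-side of $\bar N$, which is some $L_\delta$; genericity of $G$ over $L$ restricted appropriately gives $\bar G = G \cap L_\alpha$ as a generic over $L_\delta$. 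The condition "$\bar\gamma$ is an $L$-cardinal" for enough $\gamma$ is exactly what forces $\delta = \bar\theta$ to be correct and regular in $L$ (otherwise the collapse would have distorted the cardinal structure and $\bar\gamma$ could fail to be an $L$-cardinal). So the extra conjunct in (b) is precisely the combinatorial content that upgrades a naive condensation to a "remarkable" one.

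The main obstacle I anticipate is the bookkeeping in (b) $\Rightarrow$ (a): verifying that the transitive collapse $\bar N$ is genuinely of the form $L_{\bar\theta}[\bar G]$ with $\bar\theta$ regular in $L$ and $\bar G$ a $Col(\omega,<\alpha)$-generic over $L_{\bar\theta}$, and in particular pinning down that $\mathrm{crit}(\pi) < \bar\theta < \kappa$ with $\sigma(\mathrm{crit}(\pi)) > \bar\theta$ for the auxiliary embedding $\sigma$ hidden in Schindler's definition of remarkability. Concretely one should take $\theta' > \theta$ regular in $L$, apply (b) at $\theta'$ with $\gamma = \theta$, get $X' \prec L_{\theta'}[G]$ with $\bar\theta := \overline{\theta}{}^{\,X'}$ an $L$-cardinal (hence, being a regular cardinal of $L_{\theta'}$'s collapse and of the form needed, regular in $L$), and let $M = $ collapse of $X' \cap L_\theta[G]$, $N = $ collapse of $X'$, reading off $\sigma\colon M \to N$ from the inclusion; then $M = H_{\bar\theta}^N$ by elementarity, matching the definition. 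One must check this two-step collapse interacts correctly with the forcing — that is the delicate point, but it is the standard trick for getting the "$N$ sees $M$ as $H_{\bar\theta}$" clause, and (b) supplies the needed regularity of $\bar\theta$ in $L$. I would present (a) $\Rightarrow$ (b) in full and (b) $\Rightarrow$ (a) with the collapse argument spelled out carefully, citing \cite{Schindler2} for the equivalence of the two characterizations of remarkability.
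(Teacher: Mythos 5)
Your direction (a) $\Rightarrow$ (b) is essentially correct and is what the paper leaves implicit: if $X$ condenses remarkably via $\pi$ with transitive collapse $(L_{\beta}[G\upharpoonright\alpha],\in,L_{\beta},G\upharpoonright\alpha)$ and $\beta$ regular in $L$, then $L_{\beta}=H_{\beta}^{L}$ computes cardinals below $\beta$ correctly, so $\bar{\gamma}=\pi^{-1}(\gamma)$ is a genuine $L$-cardinal; intersecting with the club of $X$ with $\gamma\in X$ preserves stationarity. (Also, for (b) $\Rightarrow$ (a) you only need to recover the stationarity statement of Lemma \ref{key lemma on remarkable cn}, so the two-step collapse producing the auxiliary $\sigma$, $M=H_{\bar{\theta}}^{N}$, etc., is not needed if you use that lemma as a black box.)

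The genuine gap is in (b) $\Rightarrow$ (a), at exactly the point you flag as delicate. Condensing remarkably requires the height $\beta=o.t.(X\cap\theta)$ of the collapse to be a \emph{regular} cardinal of $L$. You apply (b) with $\gamma=\theta$ and some regular $\theta'>\theta$, obtain stationarily many $X'$ with $\bar{\theta}:=o.t.(X'\cap\theta)$ an $L$-cardinal, and then assert parenthetically that $\bar{\theta}$, being a regular cardinal of the collapse of $L_{\theta'}$, is regular in $L$. This does not follow. What you actually know is $L_{\bar{\theta'}}\models``\bar{\theta}$ is regular'' where $\bar{\theta'}=o.t.(X'\cap\theta')$; but $\bar{\theta'}$ is just some countable ordinal, not known to be an $L$-cardinal, so $L_{\bar{\theta'}}\neq H_{\bar{\theta'}}^{L}$ in general and may simply fail to contain the function that singularizes $\bar{\theta}$ in $L$. ``Is an $L$-cardinal'' together with ``is regular in the collapsed structure'' does not yield ``is regular in $L$''. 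The repair is to apply (b) one cardinal higher: take $\gamma=(\theta^{+})^{L}$ and $\theta'=(\theta^{++})^{L}$. If $\bar{\gamma}=o.t.(X'\cap(\theta^{+})^{L})$ is an $L$-cardinal, then the collapse of $X'\cap L_{(\theta^{+})^{L}}$ is $L_{\bar{\gamma}}=H_{\bar{\gamma}}^{L}$, which by elementarity satisfies ``$\bar{\theta}$ is the largest cardinal and is regular''; since any $L$-function singularizing $\bar{\theta}$ is hereditarily of size $\leq\bar{\theta}<\bar{\gamma}$ and hence lies in $H_{\bar{\gamma}}^{L}$, the ordinal $\bar{\theta}$ really is regular in $L$. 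Setting $X=X'\cap L_{\theta}[G]$ then gives stationarily many remarkably condensing $X$; the remaining bookkeeping you describe ($X\cap\kappa=crit(\pi)\in\kappa$, the collapse of $G\cap X$ being $G\upharpoonright\alpha$, and $\bar{\theta}<\kappa=\omega_{1}^{L[G]}$) does hold on a club. With that one change your outline goes through and agrees in substance with the paper's own (very terse) proof, which just chains Lemma \ref{key lemma on remarkable cn} with this equivalence.
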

\begin{proof}
By Lemma \ref{key lemma on remarkable cn}, $\kappa$ is remarkable in $L$ \text{if{f}} if $\theta>\kappa$ is a regular cardinal in $L$ and $G$ is $Col(\omega, <\kappa)$-generic over $L$, then $L[G]\models ``\{X\in [L_{\theta}[G]]^{\omega}| X=ran(\pi)$ for some elementary $\pi: (L_{\beta}[G\upharpoonright \alpha], \in, L_{\beta}, G\upharpoonright\alpha)\rightarrow (L_{\theta}[G], \in, L_{\theta}, G)$ where $\alpha=crit(\pi)<\beta<\kappa$ and $\beta$ is a regular cardinal in $L$\} is stationary'' \text{if{f}} if $\gamma\geq\kappa$ is an $L$-cardinal, $\theta>\gamma$ is  a regular cardinal in $L$ and $G$ is $Col(\omega, <\kappa)$-generic over $L$, then $L[G]\models ``\{X| X\prec L_{\theta}\wedge |X|=\omega\wedge \gamma\in X\wedge \bar{\gamma}$ is an $L$-cardinal\} is stationary".
\end{proof}

In the rest of this section, we assume that $S$ is a  stationary subset of $\omega_1$.

\begin{definition}
(Harrington's forcing, \cite{Harrington2})\quad $P_{S}=\{p: p$ is a closed bounded subset of $\omega_1$ and $p\subseteq S\}$. For $p,q\in P_{S}, p\leq q$ if and only if $p\supseteq q$ and for any $\alpha\in p\setminus q, \alpha> sup(q)$.\footnote{$|P_{S}|=2^{\omega}, P_{S}$ is $\omega$-distributive and hence assuming $CH$,  $P_{S}$ preserves all cardinals.}
\end{definition}

\begin{definition}
(Baumgartner's forcing, \cite{Baumgartner})\quad Define $P_{S}^{B}=\{f: dom(f)\rightarrow S\mid dom(f)\subseteq\omega_1$ is finite and $\exists\alpha>\max(dom(f))\, \exists g: \alpha\rightarrow S(g$ is continuous,  increasing and $g\upharpoonright dom(f)=f$)\}. For $f, g\in P_{S}^{B}, g\leq f$ if and only if $f\subseteq g$.
\end{definition}

Note that the following are equivalent: (1) $f\in P_{S}^{B}$; (2) $dom(f)\subseteq\omega_1$ is finite and there exists $g: \max(dom(f))+1\rightarrow S$ such that $g$ is continuous, increasing and $g\upharpoonright dom(f)=f$; (3) $dom(f)\subseteq\omega_1$ is finite and there exists $C\subseteq S$ such that $C$ is closed, $o.t.(C)=\max(dom(f))+1$ and for any $\beta\in dom(f), f(\beta)$ is the $\beta$-th element of $C$.

Let $G$ be $P_{S}^{B}$-generic over $V$. Define $F_{G}=\bigcup\{f\mid f\in G\}$. Then $F_{G}: \omega_1\rightarrow S$ is increasing, continuous and $ran(F_{G})$ is a club in $\omega_1$.

\begin{fact}\label{Baumgartner fact on forcing notion}
(Baumgartner, \cite{Baumgartner})\quad ($Z_3$)\quad $|P_{S}^{B}|=\omega_1$ even not assuming $CH$ and $P_{S}^{B}$ preserves $\omega_1$.
\end{fact}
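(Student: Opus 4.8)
\emph{Plan.} The statement is a conjunction of a cardinality computation and an $\omega_1$-preservation statement, and I would handle the two separately. \emph{Cardinality.} A condition in $P_S^B$ is a finite partial function from $\omega_1$ into $\omega_1$, so $P_S^B$ is a subset of the set of all such functions, which has cardinality $\omega_1$ by the G\"odel pairing function; this is an elementary computation that goes through in $Z_3$, since $\omega_1\leq\beth_1$ and every set involved is of cardinality $\leq\beth_1$. In particular no instance of $CH$ enters, which is the point of the ``even not assuming $CH$'' clause (contrast Harrington's forcing $P_S$, where $|P_S|=2^{\omega}$). For the reverse inequality, $S$ is stationary, hence unbounded in $\omega_1$, hence $|S|=\omega_1$, and $s\mapsto\{(0,s)\}$ embeds $S$ into $P_S^B$.

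\emph{An auxiliary combinatorial fact.} The engine of the preservation argument is the following: whenever $C\subseteq S$ is closed in its supremum, then for every $\beta<\omega_1$ and every countable ordinal $\nu$ there is a closed $C'\subseteq S$ end-extending $C$ (i.e.\ $C=C'\cap(\max C+1)$) with $\max C'>\beta$ and $o.t.(C')>\nu$. I would prove this by iterating the derived-set operation on $S$: for $1\leq\xi<\omega_1$ the $\xi$-th derived set of $S$ is closed unbounded, hence meets the stationary set $S$ above any prescribed ordinal, which yields closed subsets of $S$ of every countable order type with arbitrarily large minimum, and gluing such a set above $C$ gives $C'$. Via the third of the equivalent characterisations of membership in $P_S^B$ listed after its definition, this already shows that every finite $D\subseteq\omega_1$ is the domain of a condition.

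\emph{Preservation of $\omega_1$.} I would deduce this from properness over a suitable $H_\theta$; concretely it suffices to show that for every $p\in P_S^B$ and every $P_S^B$-name $\dot f$ for a function $\omega\to\omega_1$ there is $q\leq p$ forcing $\dot f$ to be non-surjective. Pick a countable $M\prec H_\theta$ ($\theta$ a large enough regular cardinal; inside $Z_3$ one works in $H_{\beth_1^{+}}$) with $p,\dot f,P_S^B,S\in M$; the set of possible values of $M\cap\omega_1$ contains a club of $\omega_1$, so stationarity of $S$ lets me demand $\delta:=M\cap\omega_1\in S$. Two things remain. First, $q:=p\cup\{(\delta,\delta)\}$ is a condition: using elementarity of $M$ and the auxiliary fact, recursively build inside $M$ an end-extending $\omega$-chain $C_0\subseteq C_1\subseteq\cdots$ of closed subsets of $S$, with $C_0$ extending $p$ and $o.t.(C_0)>\max(dom(p))$ and with $\max C_n$ and $o.t.(C_n)$ each cofinal in $\delta$; then $C^{*}:=\{\delta\}\cup\bigcup_n C_n$ is a closed subset of $S$ of order type $\delta+1$ whose $\beta$-th element is $p(\beta)$ for $\beta\in dom(p)$ and whose top element is $\delta$, so $q\in P_S^B$ by characterisation $(3)$, witnessed by the increasing enumeration of $C^{*}$. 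Second, $q$ is $(M,P_S^B)$-generic: if $q'\leq q$ and $D\in M$ is dense below $p$, then $q'\upharpoonright\delta$ (the restriction of $q'$ to its domain-elements below $\delta$) is again a condition, it lies in $M$ since it is a finite function with domain and range contained in $M\cap\omega_1$, and $q'\upharpoonright\delta\leq p$, so there is $r\in D\cap M$ with $r\leq q'\upharpoonright\delta$; running the recursion of the previous step with $r$ in place of $p$ produces a closed subset of $S$ of order type $\delta$ extending $r$, and splicing its enumeration below $\delta$ with the value $\delta$ at $\delta$ and with a witness for $q'$ above $\delta$ yields a continuous increasing $S$-valued function witnessing $r\cup q'\in P_S^B$, so $r$ and $q'$ are compatible. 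Granting both, for each $n$ the dense set of conditions deciding $\dot f(n)$ belongs to $M$, so $(M,P_S^B)$-genericity of $q$ gives $q\Vdash ran(\dot f)\subseteq M\cap\omega_1=\delta$; thus $q$ forces $\dot f$ not to be onto $\omega_1$, and since such $q$ are dense below every condition, $\omega_1$ is preserved.

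\emph{Main obstacle.} The heart of the matter is the step ``$q=p\cup\{(\delta,\delta)\}$ is a condition'' together with the splicing used for compatibility: one must see that when $\delta=M\cap\omega_1\in S$, the trace $S\cap\delta$ is rich enough to contain, through any prescribed finitely many values, a closed cofinal subset of $\delta$ of order type $\delta$, and that the witness so obtained is continuous at $\delta$ with value exactly $\delta$. This is precisely Baumgartner's insight; the cardinality computation, the density of the good conditions $q$, and the bookkeeping parts of the genericity verification are routine. The only remaining point is to check that these arguments (countable elementary submodels, guessing $\delta\in S$) are carried out inside $Z_3$, which is unproblematic since every object involved is coded by a real or by a subset of $\omega_1$.
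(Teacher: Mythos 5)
The paper does not prove this statement at all --- it is quoted as a known result of Baumgartner, so there is no in-paper argument to compare against. Your proposal reconstructs what is essentially Baumgartner's original proof, and it is correct in all essential respects: the cardinality computation is right (and the contrast with $|P_S|=2^\omega$ is exactly the point of the ``even not assuming $CH$'' clause), and the preservation argument via a countable $M\prec H_\theta$ with $\delta=M\cap\omega_1\in S$, the verification that $p\cup\{(\delta,\delta)\}$ is an $(M,P_S^B)$-generic condition, and the splicing argument for compatibility of $r$ with $q'$ are all the standard route. Two points deserve tightening. First, your auxiliary fact is Friedman's theorem that a stationary $S\subseteq\omega_1$ contains closed subsets of every countable order type above any prescribed ordinal, but the derived-set sketch as literally written does not prove it: the iterated derived sets must be taken \emph{relative to} $S$, i.e.\ $S^{(\xi+1)}=S^{(\xi)}\cap\lim(S^{(\xi)})$, and these are stationary rather than closed unbounded; one checks stationarity survives limit stages because $S^{(\xi)}=S\cap\bigcap_{\zeta<\xi}C_\zeta$ for clubs $C_\zeta$ and countable intersections of clubs are club. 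Second, the remark that ``inside $Z_3$ one works in $H_{\beth_1^+}$'' is off, since in a model of $Z_3$ that is the entire universe and not a set; the elementary-submodel argument should instead be run over a structure of size $\leq\beth_1$ coding $p$, $\dot f$, $P_S^B$ and $S$ (only the countably many dense sets deciding the values $\dot f(n)$ are needed, and these are definable from $\dot f$), in the spirit of the Skolem-hull Fact \ref{apply above fact}. Neither issue affects the correctness of the overall argument.
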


Since $P_{S}^{B}$ is $\omega_2$-$c.c$ and preserves $\omega_1$, $P_{S}^{B}$ preserves all cardinals.

\begin{proposition}\label{reflecting}
Suppose $\gamma\geq\omega_1$ is an $L$-cardinal. Then the  following are equivalent:
\begin{enumerate}[(a)]
  \item For some  regular cardinal $\kappa> \gamma$, $\forall X((X\prec H_{\kappa}, |X|=\omega$ and $\gamma\in X)\rightarrow \bar{\gamma}$ is an $L$-cardinal).
  \item  There exists $F: \gamma^{<\omega}\rightarrow\gamma$ such that if   $X\subseteq\gamma$ is countable and closed under $F$, then $o.t.(X)$ is an $L$-cardinal.\footnote{In this paper we say $X$ is closed under $F$ if $F``X^{<\omega}\subseteq X$.}
  \item For any  regular cardinal $\kappa> \gamma$, $\forall X((X\prec H_{\kappa}\wedge |X|=\omega\wedge \gamma\in X)\rightarrow \bar{\gamma}$ is an $L$-cardinal).
\end{enumerate}
\end{proposition}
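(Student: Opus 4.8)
The plan is to prove the cycle of implications $(c) \Rightarrow (a) \Rightarrow (b) \Rightarrow (c)$, since $(c) \Rightarrow (a)$ is trivial (universal quantification over all regular $\kappa > \gamma$ implies existence for some such $\kappa$). The two substantive directions are $(a) \Rightarrow (b)$ and $(b) \Rightarrow (c)$.

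For $(a) \Rightarrow (b)$, fix a regular $\kappa > \gamma$ witnessing $(a)$, and use a Skolem-function argument. Let $\mathfrak{A} = (H_\kappa, \in, <^*)$ for some well-order $<^*$, and let $h : H_\kappa^{<\omega} \to H_\kappa$ be a Skolem function for $\mathfrak{A}$, so that any set closed under $h$ and containing a given element is the universe of an elementary submodel of $\mathfrak{A}$. The point is to internalize $h$ to a function on $\gamma$: I would fix a bijection (or at least an injection) relating $\gamma$ to the relevant piece of $H_\kappa$ and define $F : \gamma^{<\omega} \to \gamma$ so that closure under $F$ corresponds to closure under $h$ on the part of the model that matters, namely the set $X \cap \gamma$. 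More precisely, if $X \prec H_\kappa$ is countable with $\gamma \in X$, then $X \cap \gamma \in \gamma$ is an ordinal, $\mathrm{o.t.}(X \cap \gamma)$ is exactly $\overline{\gamma}$ (the image of $\gamma$ under the transitive collapse of $X$, since $\gamma \in X$ forces $X \cap \gamma$ to be transitive below and the collapse sends $\gamma$ to $\mathrm{o.t.}(X \cap \gamma)$), and this is an $L$-cardinal by $(a)$. The work is to arrange $F$ so that an arbitrary countable $Y \subseteq \gamma$ closed under $F$ is of the form $X \cap \gamma$ for some countable $X \prec H_\kappa$ with $\gamma \in X$; then $\mathrm{o.t.}(Y)$ is an $L$-cardinal as desired. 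One convenient way: enumerate $H_\kappa$ in order type $\kappa$, but restrict attention to $L_\gamma = H_\gamma$ (using $V = L$ below $\gamma$ is not assumed, but $\gamma$ being an $L$-cardinal and the statement being about $L$-cardinals, we work with $L_\kappa$ or $L_\gamma$) — I would take Skolem hulls inside $L_\kappa$ rather than $H_\kappa$, which is legitimate since for the conclusion "$\overline{\gamma}$ is an $L$-cardinal" only the $L$-structure is relevant, and then code the Skolem function of $L_\kappa$ restricted to hulls of subsets of $\gamma$ as a function $F : \gamma^{<\omega} \to \gamma$.

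For $(b) \Rightarrow (c)$, let $F : \gamma^{<\omega} \to \gamma$ be as in $(b)$, fix any regular $\kappa > \gamma$, and let $X \prec H_\kappa$ be countable with $\gamma \in X$. Since $\gamma \in X$, $F$ is definable from $\gamma$ (or I would arrange $F \in X$ by elementarity if $F$ is definable, or simply note that $(b)$ is a statement whose witness can be absorbed: actually the cleanest route is to observe that "there exists such an $F$" is itself witnessed by some definable $F$, e.g. the $<_L$-least one, so $F \in X$). Then $X \cap \gamma$ is closed under $F$: given $\vec{a} \in (X \cap \gamma)^{<\omega}$, we have $F(\vec{a}) \in X$ by elementarity and $F(\vec{a}) \in \gamma$ by definition of $F$, so $F(\vec{a}) \in X \cap \gamma$. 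By $(b)$, $\mathrm{o.t.}(X \cap \gamma)$ is an $L$-cardinal. Finally, $\mathrm{o.t.}(X \cap \gamma) = \overline{\gamma}$: the transitive collapse of $X$ maps $\gamma$ to the order type of $X \cap \gamma$ because every element of $X$ below $\gamma$ lies in $X \cap \gamma$ and conversely. Hence $\overline{\gamma}$ is an $L$-cardinal, giving $(c)$.

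The main obstacle is the coding in $(a) \Rightarrow (b)$: one must pass from a Skolem function $h$ on all of $H_\kappa$ (or $L_\kappa$) to a function $F$ living on $\gamma^{<\omega}$ in such a way that $F$-closed subsets of $\gamma$ are precisely the traces $X \cap \gamma$ of $h$-closed elementary submodels. The subtlety is that a Skolem hull generated by a subset of $\gamma$ may contain ordinals above $\gamma$ (indeed it contains $\gamma$ itself and much more), so $F$ cannot simply be $h$ restricted to $\gamma$; instead one defines $F(\vec{a})$ to be, say, the ordinal coding (via a fixed pairing) the relevant data about the Skolem term values that land below $\gamma$, together with enough bookkeeping that closing under $F$ forces closure of the ambient hull below $\gamma$. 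Getting this bookkeeping right — ensuring no "new" ordinals below $\gamma$ sneak into the hull without being forced in by $F$ — is the one place where care is needed; everything else is a routine elementarity-and-collapse argument.
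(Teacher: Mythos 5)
Your proposal is essentially the paper's argument. For $(b)\Rightarrow(c)$ you do exactly what the paper does: reflect the existence of a witness $F$ into $X$ by elementarity (statement $(b)$ is computed correctly inside $H_\kappa$, since $\kappa>\gamma\geq\omega_1$ is regular and so $H_\kappa$ contains every countable subset of $\gamma$ as well as $L_{\omega_1}$), observe that $X\cap\gamma$ is closed under $F$, and use $\bar{\gamma}=o.t.(X\cap\gamma)$. Your parenthetical about taking the ``$<_L$-least'' witness is off the mark --- the witness $F$ need not lie in $L$ --- but the elementarity route you state first is the correct one and is all that is needed. For $(a)\Rightarrow(b)$ the paper simply invokes the standard characterization of clubs in $[\gamma]^{\omega}$: the set $\{X\cap\gamma \mid X\prec H_{\kappa},\ |X|=\omega,\ \gamma\in X\}$ contains a club, and any subset of $[\gamma]^{\omega}$ containing a club contains every countable $F$-closed set for a suitable $F:\gamma^{<\omega}\rightarrow\gamma$. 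Your Skolem-term construction is just the proof of that standard fact, and the ``bookkeeping'' you worry about is handled in the usual way: for each Skolem term $t$ of $(H_{\kappa},\in,<^{*})$ set $F_{t}(\vec{a})=t(\vec{a},\gamma)$ when this value is an ordinal below $\gamma$ and $0$ otherwise, and merge the countably many $F_{t}$ into a single $F$ by letting the first coordinate of the input select $t$; then any countable $F$-closed $Y\subseteq\gamma$ satisfies $\mathrm{Hull}(Y\cup\{\gamma\})\cap\gamma=Y$, so $Y$ is a trace and $(a)$ applies. So the two proofs differ only in that you unwind a fact the paper cites.

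One genuine caution: your suggestion to take the hulls inside $L_{\kappa}$ rather than $H_{\kappa}$ ``since only the $L$-structure is relevant'' would break the argument. Hypothesis $(a)$ is a statement about elementary submodels of $H_{\kappa}$; a countable $Y\prec L_{\kappa}$ with $\gamma\in Y$ need not be of the form $X\cap L_{\kappa}$ for any countable $X\prec H_{\kappa}$, so $(a)$ tells you nothing about $o.t.(Y\cap\gamma)$ for such $Y$, and the implication $(a)\Rightarrow(b)$ would not go through. Stick with Skolem functions for $(H_{\kappa},\in,<^{*})$, which is what your main line uses anyway.
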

\begin{proof}
$(a)\Rightarrow (b)$ Let $\kappa>\gamma$ be the witness regular cardinal for $(1)$. Let $Z=\{X\mid X\prec H_{\kappa},  |X|=\omega$ , $\gamma\in X$ and $\bar{\gamma}$ is an $L$-cardinal\}. Then $Z\upharpoonright\gamma=\{X\cap\gamma\mid X\in Z\}$ contains a club $E$ in $[\gamma]^{\omega}$. So there exists $F: \gamma^{<\omega}\rightarrow\gamma$ such that if $X\subseteq\gamma$ is countable and closed under $F$, then $X\in E$. Suppose $X\subseteq\gamma$ is countable and closed under $F$. We show that $o.t.(X)$ is an $L$-cardinal. Since  $X\in E$, $X=Y\cap\gamma$ for some $Y\in Z$ and hence $\bar{\gamma}$ is an $L$-cardinal. So $o.t.(X)=o.t.(Y\cap\gamma)=\bar{\gamma}$  is  an $L$-cardinal.

$(b)\Rightarrow (c)$ Suppose $\kappa>\gamma$ is regular, $X\prec H_{\kappa}$, $|X|=\omega$ and  $\gamma\in X$.  We show that $\bar{\gamma}$  is an $L$-cardinal. By (b), take $ F\in X$ such that in $X, F: \gamma^{<\omega}\rightarrow\gamma$ has the property that
\begin{equation}\label{property of function}
\text{if $X\subseteq\gamma$ is countable and closed under $F$, then $o.t.(X)$ is an $L$-cardinal.}
\end{equation}
Since $X\cap\gamma$ is closed under $F$, by (\ref{property of function}), $o.t.(X\cap\gamma)$ is an $L$-cardinal. But $\bar{\gamma}=o.t.(X\cap\gamma)$.
\end{proof}

\begin{definition}\label{definition of strong reflecting property and weakly}
Let $\gamma$ be an $L$-cardinal. If $\gamma\geq\omega_1$, we say $\gamma$ has the strong reflecting property if Proposition \ref{reflecting}(a) holds. If $\gamma<\omega_1$, we say that $\gamma$ has the strong reflecting property iff $\gamma=\gamma$. 
\end{definition}

\begin{proposition}\label{equivalent forms of strong reflecting property}
Suppose $\gamma\geq\omega_1$ is an $L$-cardinal  and $|\gamma|=\omega_1$. Then the  following are equivalent:
\begin{enumerate}[(a)]
\item $\gamma$ has the strong reflecting property.
  \item For any bijection $\pi: \omega_1\rightarrow \gamma$, there exists a club $D\subseteq\omega_1$ such that for any $\theta\in D$, $o.t.(\{\pi(\alpha)\mid\alpha< \theta\})$ is an $L$-cardinal.
      \item For some bijection $\pi: \omega_1\rightarrow \gamma$, there exists a club $D\subseteq\omega_1$ such that for any $\theta\in D$, $o.t.(\{\pi(\alpha)\mid\alpha< \theta\})$ is an $L$-cardinal.
\end{enumerate}
\end{proposition}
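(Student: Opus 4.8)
The plan is to run the cycle $(a)\Rightarrow(b)\Rightarrow(c)\Rightarrow(a)$, using Proposition~\ref{reflecting} to pass freely between the strong reflecting property (clause $(a)$ there) and its combinatorial reformulation (clause $(b)$ there) in terms of a function $F\colon\gamma^{<\omega}\to\gamma$. The hypothesis $|\gamma|=\omega_1$ is used only to guarantee that a bijection $\pi\colon\omega_1\to\gamma$ exists --- which is needed in the formulations $(b)$ and $(c)$ --- and, in $(c)\Rightarrow(a)$, that the relevant objects are small enough to lie in $H_{\omega_2}$.

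For $(a)\Rightarrow(b)$ I would first invoke Proposition~\ref{reflecting} to fix $F\colon\gamma^{<\omega}\to\gamma$ such that every countable $X\subseteq\gamma$ closed under $F$ has $o.t.(X)$ an $L$-cardinal. Given an arbitrary bijection $\pi\colon\omega_1\to\gamma$, let $D$ be the set of all $\theta<\omega_1$ for which $\{\pi(\alpha)\mid\alpha<\theta\}$ is closed under $F$. A routine argument shows $D$ is a club in $\omega_1$: it is closed because a union of a $\subseteq$-increasing chain of $F$-closed sets is $F$-closed (every finite tuple already lies in some member of the chain), and it is unbounded because from any $\alpha_0<\omega_1$ one can recursively choose $\alpha_0<\alpha_1<\cdots$ so that $\{\pi(\alpha)\mid\alpha<\alpha_{n+1}\}$ absorbs the countably many values of $F$ on finite tuples from $\{\pi(\alpha)\mid\alpha<\alpha_n\}$, and then $\sup_n\alpha_n\in D$. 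For each $\theta\in D$ the set $\{\pi(\alpha)\mid\alpha<\theta\}$ is a countable subset of $\gamma$ closed under $F$, so $o.t.(\{\pi(\alpha)\mid\alpha<\theta\})$ is an $L$-cardinal; hence $D$ witnesses $(b)$ for this $\pi$. Since $\pi$ was arbitrary, $(b)$ follows, and $(b)\Rightarrow(c)$ is immediate because a bijection $\pi\colon\omega_1\to\gamma$ exists.

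For $(c)\Rightarrow(a)$ I would fix a bijection $\pi\colon\omega_1\to\gamma$ and a club $D\subseteq\omega_1$ as in $(c)$, and fix a regular cardinal $\kappa>\gamma$ with $\pi,D\in H_\kappa$ (e.g.\ $\kappa=\omega_2$, since $|\gamma|=\omega_1$ forces $\gamma<\omega_2$). I claim $\kappa$ witnesses clause $(a)$ of Proposition~\ref{reflecting} for $\gamma$, which by Definition~\ref{definition of strong reflecting property and weakly} is exactly the strong reflecting property. So let $X\prec H_\kappa$ be countable with $\gamma\in X$; we must show $\bar{\gamma}=o.t.(X\cap\gamma)$ is an $L$-cardinal. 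The standard facts about countable elementary submodels give $\omega_1\in X$ and that $\theta^{*}:=X\cap\omega_1$ is a countable limit ordinal. Now $(c)$ is a statement about $H_\kappa$ with parameter $\gamma\in X$, and it is true in $H_\kappa$ --- the one point to check is that its clauses ``$o.t.(\{\pi(\alpha)\mid\alpha<\theta\})$ is an $L$-cardinal'' concern countable ordinals and are therefore absolute between $H_\kappa$ and $V$ --- so by elementarity there are $\pi',D'\in X$ which $H_\kappa$, hence (by the same absoluteness) also $V$, correctly regards as a bijection $\omega_1\to\gamma$ and a club in $\omega_1$ such that $o.t.(\{\pi'(\alpha)\mid\alpha<\theta\})$ is an $L$-cardinal for every $\theta\in D'$. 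Since $D'\in X$ is unbounded in $\omega_1$, the set $D'\cap X$ is cofinal in $\theta^{*}$, so $\theta^{*}\in D'$ as $D'$ is closed; and $\{\pi'(\alpha)\mid\alpha<\theta^{*}\}=X\cap\gamma$ because $\pi'\in X$ is a bijection (for $\alpha<\theta^{*}=X\cap\omega_1$ we get $\pi'(\alpha)\in X\cap\gamma$, and for $\delta\in X\cap\gamma$ we get $(\pi')^{-1}(\delta)\in X\cap\omega_1=\theta^{*}$). Therefore $\bar{\gamma}=o.t.(X\cap\gamma)=o.t.(\{\pi'(\alpha)\mid\alpha<\theta^{*}\})$ is an $L$-cardinal, as required.

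The routine ingredients are the club computation in $(a)\Rightarrow(b)$ and the usual facts about countable $X\prec H_\kappa$. The genuinely delicate point --- and the crux of the whole argument --- is in $(c)\Rightarrow(a)$: one may not assume that the fixed witnessing pair $(\pi,D)$ lies in the arbitrary $X\prec H_\kappa$, so one must instead pull a witness $(\pi',D')$ into $X$ by elementarity and then appeal to the absoluteness of ``is an $L$-cardinal'' for countable ordinals to conclude that this pulled-back pair still witnesses $(c)$.
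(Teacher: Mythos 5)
Your proof is correct and follows essentially the same route as the paper's: the crucial direction $(c)\Rightarrow(a)$ is handled identically, by pulling a witnessing pair $(\pi',D')$ into $X$ via elementarity and observing that $X\cap\omega_1\in D'$ and $\pi'``(X\cap\omega_1)=X\cap\gamma$. The only cosmetic difference is in $(a)\Rightarrow(b)$, where you route through the function $F$ of Proposition \ref{reflecting}(b) and the club of its closure points, while the paper directly takes the club of traces $X\cap\omega_1$ for countable $X\prec H_{\kappa}$ with $\pi,\gamma\in X$; both are routine.
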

\begin{proof}
$(a)\Rightarrow (b)$ Let $\kappa>\gamma$ be the regular cardinal that witnesses the  strong reflecting property of $\gamma$. Suppose $\pi: \omega_1\rightarrow \gamma$ is a bijection. Let $E=\{X\cap \omega_1\mid X\prec H_{\kappa}, |X|=\omega, \pi\in X$ and $\gamma\in X\}$. Then $E$ contains a club $D$ in $\omega_1$. Let $\beta\in D$.  Then $\beta=X\cap \omega_1$ for some $X$ such that $\pi\in X, X\prec H_{\kappa}, |X|=\omega$ and $\gamma\in X$. Note that $\bar{\gamma}=o.t.(\{\pi(\alpha)\mid\alpha< X\cap \omega_1\})$. So $o.t.(\{\pi(\alpha)\mid\alpha< \beta\})=\bar{\gamma}$ is an $L$-cardinal.

$(c)\Rightarrow (a)$ Let $\kappa>\gamma$ be a regular cardinal with $\kappa\geq(2^{\omega_1})^{+}$. Suppose $X\prec H_{\kappa},|X|=\omega$ and $\gamma\in X$. We show that $\bar{\gamma}$ is an $L$-cardinal. By $(c)$, take $\pi, D\in X$ such that $\pi: \omega_1\rightarrow\gamma$ is a bijection and $D\subseteq\omega_1$ is the witness club for $\pi$ in $(c)$. Since $D$ is unbounded in $X\cap\omega_1, X\cap\omega_1\in D$. Note that $\bar{\gamma}=o.t.(\{\pi(\alpha)\mid\alpha\in X\cap \omega_1\})$. So $\bar{\gamma}$ is an $L$-cardinal.
\end{proof}

Let $(i)^{\ast}, (ii)^{\ast}$ and $(iii)^{\ast}$ respectively denote the statements which replace ``is an $L$-cardinal" with ``is not an $L$-cardinal" in  Proposition \ref{reflecting}(a), Proposition \ref{reflecting}(c) and Proposition \ref{equivalent forms of strong reflecting property}(b). The following corollary is an observation from proofs of Proposition \ref{reflecting} and Proposition \ref{equivalent forms of strong reflecting property}.

\begin{corollary}\label{corollary about strong reflecting property}
Suppose $\gamma\geq\omega_1$ is an $L$-cardinal  and $|\gamma|=\omega_1$. Then $(ii)^{\ast} \Leftrightarrow (i)^{\ast}\Leftrightarrow (iii)^{\ast}$.
\end{corollary}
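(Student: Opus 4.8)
The plan is to notice that the two propositions whose proofs are being invoked, Proposition \ref{reflecting} and Proposition \ref{equivalent forms of strong reflecting property}, are \emph{property-blind}: in neither proof is anything used about the predicate ``$\delta$ is an $L$-cardinal'' beyond the fact that it is a property of ordinals $\delta\leq\gamma$ which, for a regular cardinal $\kappa>\gamma$ taken large enough (say $\kappa>\gamma^{+}$, or $\kappa\geq(2^{\omega_1})^{+}$ at the one place in Proposition \ref{equivalent forms of strong reflecting property} where that size is needed), is correctly decided by $H_{\kappa}$ for all $\delta\leq\gamma$ and hence reflects into every countable $X\prec H_{\kappa}$ with $\gamma\in X$ (along with the auxiliary parameters $F$, resp.\ $\pi,D$, thrown into $X$). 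Since the complementary predicate ``$\delta$ is not an $L$-cardinal'' has exactly these same features, I would simply rerun both proofs verbatim with ``is an $L$-cardinal'' replaced everywhere by ``is not an $L$-cardinal'' and then assemble the resulting implications.

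Concretely, first I would rerun the proof of Proposition \ref{reflecting} under this substitution to get $(i)^{\ast}\Rightarrow(b)^{\ast}\Rightarrow(ii)^{\ast}$, where $(b)^{\ast}$ is the substituted form of Proposition \ref{reflecting}(b) (there is $F\colon\gamma^{<\omega}\rightarrow\gamma$ such that every countable $X\subseteq\gamma$ closed under $F$ has $o.t.(X)$ not an $L$-cardinal). Since there is at least one regular cardinal above $\gamma$, the reverse implication $(ii)^{\ast}\Rightarrow(i)^{\ast}$ is immediate, so $(i)^{\ast}\Leftrightarrow(ii)^{\ast}$.

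Next I would rerun the proof of Proposition \ref{equivalent forms of strong reflecting property}: its $(a)\Rightarrow(b)$ direction, substituted, gives $(i)^{\ast}\Rightarrow(iii)^{\ast}$. For the other direction, note that $(iii)^{\ast}$ quantifies over \emph{all} bijections $\pi\colon\omega_1\rightarrow\gamma$ and that such a bijection exists because $|\gamma|=\omega_1$; hence $(iii)^{\ast}$ supplies precisely the ``for some bijection'' hypothesis that the proof of $(c)\Rightarrow(a)$ of Proposition \ref{equivalent forms of strong reflecting property} consumes, and that argument, substituted, yields $(iii)^{\ast}\Rightarrow(i)^{\ast}$. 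Combining, $(i)^{\ast}\Leftrightarrow(iii)^{\ast}$, and together with the previous paragraph this gives $(ii)^{\ast}\Leftrightarrow(i)^{\ast}\Leftrightarrow(iii)^{\ast}$.

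I do not expect a genuine obstacle. The only thing deserving attention --- and it is exactly the point already handled inside the two cited proofs --- is the absoluteness/definability remark: that for a sufficiently large regular $\kappa$ the structure $H_{\kappa}$ correctly computes which $\delta\leq\gamma$ are $L$-cardinals (immediate, since an $L$-surjection from an ordinal $<\delta$ onto $\delta$, if it exists at all, already lies in $L_{\delta^{+}}\subseteq H_{\kappa}$), so that both ``$\bar\gamma$ is an $L$-cardinal'' and its negation pass up and down between $V$, $H_{\kappa}$ and the countable elementary submodels of $H_{\kappa}$ containing $\gamma$ and the relevant parameters. Once this is observed, the corollary is nothing more than the remark that the earlier arguments never inspect the predicate they are reflecting.
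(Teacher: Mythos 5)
Your proposal is correct and is exactly the paper's intended argument: the paper offers no separate proof, stating only that the corollary "is an observation from proofs of Proposition \ref{reflecting} and Proposition \ref{equivalent forms of strong reflecting property}," and your systematic substitution of ``is not an $L$-cardinal'' for ``is an $L$-cardinal'' throughout those two proofs, together with the trivial implications $(ii)^{\ast}\Rightarrow(i)^{\ast}$ and $(iii)^{\ast}\Rightarrow$ (starred $(c)$), is precisely that observation spelled out. Your accompanying remark that both the predicate and its negation are absolute for $H_{\kappa}$ correctly identifies the only point where the substitution needs checking.
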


\begin{proposition}\label{upward abso for reflecting proper}
Suppose $\gamma\geq\omega_1$ is an $L$-cardinal. The statement $``\gamma$ has the strong reflecting property'' is upward absolute.
\end{proposition}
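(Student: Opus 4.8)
The plan is to pass to the combinatorial reformulation of the strong reflecting property given by Proposition~\ref{reflecting}: for an $L$-cardinal $\gamma\ge\omega_1$, the property is equivalent to clause~(b), i.e.\ to the existence of a single function $F\colon\gamma^{<\omega}\to\gamma$ such that every countable $X\subseteq\gamma$ closed under $F$ has $o.t.(X)$ an $L$-cardinal. This is the shape one wants to push upward, since (unlike clause~(a)) it asserts that a fixed object $F$ exists, rather than quantifying over elementary submodels of some $H_\kappa$, which behave badly when new sets are added. So let $W\supseteq V$ be an outer model with the same ordinals and assume $\gamma$ has the strong reflecting property in $V$. First I would record the easy reductions: $L^W=L^V$, so ``$\gamma$ is an $L$-cardinal'' still holds in $W$; and $\omega_1^W\ge\omega_1^V$, so if $\gamma<\omega_1^W$ then $\gamma$ has the property in $W$ trivially by Definition~\ref{definition of strong reflecting property and weakly}. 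Hence I may assume $\gamma\ge\omega_1^W$ and must exhibit a witness for clause~(b) in $W$.

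The claim is that any $F\in V$ witnessing clause~(b) in $V$ is already a witness in $W$. Fix such an $F$ and take $X\in W$ countable in $W$ with $X\subseteq\gamma$ and $F``X^{<\omega}\subseteq X$; the goal is that $o.t.(X)$ is an $L$-cardinal. Enumerate $X=\{x_n:n<\omega\}$ in $W$ and put $Y_n=\mathrm{cl}_F(\{x_0,\dots,x_n\})$. Since $F\in V$ and $\{x_0,\dots,x_n\}$ is a finite set of ordinals, $Y_n\in V$, is countable in $V$, and is closed under $F$; since $X$ is $F$-closed we have $Y_n\subseteq X$; and clearly $Y_n\subseteq Y_{n+1}$ with $\bigcup_n Y_n=X$. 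Applying clause~(b) inside $V$ to each $Y_n$ gives that every $o.t.(Y_n)$ is an $L$-cardinal, and the same argument shows $o.t.(\mathrm{cl}_F(a))$ is an $L$-cardinal for every finite $a\subseteq X$. So $X$ is presented as an increasing $\omega$-union of $F$-closed sets whose order types are $L$-cardinals.

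To finish I would argue by contradiction. We may assume $\delta:=o.t.(X)$ is infinite (finite order types are $L$-cardinals), and if $\delta$ is not an $L$-cardinal then, since a proper limit of $L$-cardinals is again one, there is a largest $L$-cardinal $\mu<\delta$, with $\delta<(\mu^+)^L$. Any $V$-countable $F$-closed $Z\subseteq X$ has $o.t.(Z)$ an $L$-cardinal and $o.t.(Z)\le o.t.(X)=\delta<(\mu^+)^L$; since there is no $L$-cardinal strictly between $\mu$ and $(\mu^+)^L$, necessarily $o.t.(Z)\le\mu$. Thus it would be enough to know that $o.t.(X)$ equals $\sup\{\,o.t.(Z): Z\subseteq X$ is $V$-countable and $F$-closed$\,\}$, which then forces $\delta\le\mu<\delta$. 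This supremum identity --- equivalently, that every proper initial segment of $X$ is contained in a $V$-countable $F$-closed subset of $X$ of at least the same order type --- is the technical heart and the step I expect to be the genuine obstacle: one cannot merely take the $F$-closure of an initial segment $X\cap\xi$, because $X\cap\xi$ need not lie in $V$, and a priori a new $F$-closed set could be ``spread out'' inside $X$ so that all of its $V$-countable $F$-closed pieces have small order type. The routes I would try are: (a)~choosing the enumeration of $X$ (or a cofinal sequence of $\xi$ with $X\cap\xi$ itself $F$-closed) so that the approximating $F$-closed sets are genuine initial segments of $X$, making the supremum identity automatic, and then finishing by induction on $o.t.(X)$; or (b)~reducing to the case where $W$ adds no new countable subset of $\gamma$, where every such $X$ already lies in $V$ and there is nothing to prove, which covers the forcings actually used in this paper. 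Granting the supremum identity the contradiction is immediate, so $F$ witnesses clause~(b) in $W$, and by Proposition~\ref{reflecting} this says exactly that $\gamma$ retains the strong reflecting property in $W$.
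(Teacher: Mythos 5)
Your opening move --- replacing clause (a) by the combinatorial clause (b) of Proposition \ref{reflecting} and claiming that the very same $F\in V$ still witnesses (b) in the outer model --- is exactly the paper's first step. But the proposal then stalls at the step you yourself flag: the ``supremum identity.'' This is a genuine gap, not a routine verification. The order type of an increasing $\omega$-union can strictly exceed the supremum of the order types of its terms (an increasing chain of sets each of order type $\omega$ can union to a set of order type $\omega\cdot 2$), so knowing that every $\mathrm{cl}_F(a)$ for finite $a\subseteq X$ has $L$-cardinal order type puts no upper bound on $o.t.(X)$; approximating $X$ from below by its $V$-countable $F$-closed pieces simply does not determine $o.t.(X)$. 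Neither proposed repair closes the gap: for (a), there is no reason that cofinally many initial segments of $X$ are $F$-closed, let alone that they lie in $V$ (the $F$-closure of an initial segment of $X$ need not be an initial segment, and may even be cofinal in $X$); for (b), restricting to extensions that add no new countable subsets of $\gamma$ proves a strictly weaker statement than the proposition, which is asserted for arbitrary inner models $M\subseteq N$.

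The paper gets around this by arguing in the contrapositive direction with a tree argument, exploiting the fact that \emph{failure} of (b) carries a countable, finitely approximable certificate while (b) itself does not. A counterexample in $N$ is repackaged as a pair $(e,\pi)$ where $e$ enumerates a countable $E\prec L_{\omega_1^{N}}$ containing an ordinal $\gamma^{\ast}$ that $L_{\omega_1^{N}}$ sees is not a cardinal, and $\pi$ enumerates an $F$-closed subset of $\gamma$ order-isomorphic to $E\cap\gamma^{\ast}$. The finite approximations to such pairs form a tree $T$ whose defining clauses are absolute, so $T\in M$; since $T$ has an infinite branch in $N$, absoluteness of well-foundedness gives a branch in $M$, and that branch produces in $M$ a countable $F$-closed subset of $\gamma$ whose order type is not an $L$-cardinal, contradicting (b) in $M$. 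If you want to complete your argument, you should abandon the attempt to bound $o.t.(X)$ from below and instead encode a putative counterexample, together with its witness of non-$L$-cardinality, as a branch through a tree living in the ground model.
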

\begin{proof}
Suppose $M\subseteq N$ are inner models and $M\models \gamma\geq\omega_1$ has the strong reflecting property. We show that $N\models\gamma$ has the strong reflecting property.

By Proposition \ref{reflecting}, in $M$, there exists $F: \gamma^{<\omega}\rightarrow\gamma$ such that (\ref{property of function}) holds. If $\gamma$ is countable in $N$, by definition, $\gamma$ has the strong reflecting property in $N$. Assume that $N\models \gamma$ is uncountable. By Proposition \ref{reflecting}, it suffices to show that in $N$, (\ref{property of function}) holds.

Suppose not. Then in $N$, there  exists $\bar{\gamma}<\omega_1$ such that $\bar{\gamma}$ is not an $L$-cardinal and there exists an order preserving $j: \bar{\gamma}\rightarrow\gamma$ such that $ran(j)$ is closed under $F$. So in $N$, there  exists $e: \omega\rightarrow L_{\omega_1^{N}}$ and $\gamma^{\prime}\in e``\omega$ such that $e``\omega\prec L_{\omega_1^{N}}, L_{\omega_1^{N}}\models ``\gamma^{\prime}$ is not an $L$-cardinal" and there exists an order preserving $j^{\prime}: o.t.(e``\omega\cap \gamma^{\prime})\rightarrow\gamma$ such that $ran(j^{\prime})$ is closed under $F$.

Let $\langle \varphi_i\mid i\in\omega\rangle$ be a recursive enumeration of formulas with infinite repetitions. We assume that for  $i\in\omega, \varphi_i$ has free variables among $x_0, \cdots, x_{i+1}$. So in $N$, there exist $e: \omega\rightarrow L_{\omega_1^{N}}, \pi: \omega\rightarrow\gamma$ and $\gamma^{\ast}\in e``\omega$ such that $(i)$ for any $i\in\omega$, if there exists $a\in L_{\omega_1^{N}}$ such that $L_{\omega_1^{N}}\models \varphi_i[a, e(0), \cdots, e(i)  ]$, then $L_{\omega_1^{N}}\models \varphi_i[e(2i+1), e(0), \cdots, e(i)]$; $(ii)$  $ran(\pi)$ is closed under $F$;  $(iii)$ $L_{\omega_1^{N}}\models \gamma^{\ast}$ is not an $L$-cardinal;  and $(iv)$ for  $i\in\omega$, if $e(i)\notin \gamma^{\ast}$, then $\pi(i)=0$;  for  $i< j\in\omega$, if $e(i), e(j)\in \gamma^{\ast}$, then $\pi(i)<\pi(j)\Leftrightarrow e(i)<e(j)$ and $\pi(i)=\pi(j)\Leftrightarrow e(i)=e(j)$. In $N$, let $T=\{(e\upharpoonright n, \pi\upharpoonright n)  :  e$ and $\pi$  have properties $(i)-(iv)$\}. $T$ is a tree and from 
$(i)-(iv)$, by absoluteness, $T\in M$. Since in $N$, there exists  $(e, \pi)$  satisfying $(i)-(iv)$, $T$ has an infinite branch in $N$. By absoluteness, $T$ has an infinite branch in $M$ and such a branch corresponds to the existence of $(e, \pi)$ with properties $(i)-(iv)$ in $M$. So in $M$, there exists $X\subseteq\gamma$ such that $X$ is countable, closed under $F$ and $o.t.(X)$ is not an $L$-cardinal which contradicts (\ref{property of function}).
\end{proof}

\section{Proof of The Main Theorem}\label{section for the main result}

In this section we prove The Main Theorem \ref{the main thm of the thesis}. Assuming there exists a remarkable cardinal with a weakly inaccessible cardinal above it, we force a set model of $Z_3\, + \, {\sf HP}$ via set forcing. We give an outline of our proof in Section \ref{final section}.  

\subsection{Step One}\label{first secton step one} 

In this step we force over $L$ to get a club in $\omega_2$ of $L$-cardinals with the strong reflecting property.

We work in $L$.  Let $\kappa$ be a remarkable cardinal and $\lambda>\kappa$ be an inaccessible cardinal.  Suppose $\bar{G}$ is $Col(\omega,<\kappa)$-generic over $L$ and $G$ is  $Col(\omega,<\kappa)\ast Col(\kappa, <\lambda)$-generic over $L$. Now we work in $L[G]$.
\[\text{Define $K=\{\gamma\mid \omega_1\leq\gamma<\omega_2$ and $\gamma$ is an $L$-cardinal\}}.\]
\begin{definition}
For $\gamma\in K$, we say $\gamma$ has the weakly reflecting property if for some bijection $\pi: \omega_1\rightarrow \gamma$, there exists  stationary $D\subseteq\omega_1$ such that for any $\theta\in D$, $o.t.(\{\pi(\alpha)\mid\alpha< \theta\})$ is an $L$-cardinal.
\end{definition}
\begin{proposition}\label{weakly re pro}
$L[G]\models$ for any $\gamma\in K, \gamma$ has the weakly reflecting property.
\end{proposition}
\begin{proof}
We work in $L[G]$. Suppose $\gamma\in K$ is a counterexample and $\theta>\gamma$ is a regular cardinal. Since $\kappa$ is remarkable in $L$, by Lemma \ref{key lemma on relationship between remark and wfp}, $L[\bar{G}]\models \{X| X\prec H_{\theta}\wedge|X|=\omega\wedge \gamma\in X\wedge \bar{\gamma}$ is an $L$-cardinal\} is stationary. Note that the property ``$X\prec  H_{\theta}\wedge |X|=\omega\wedge \gamma\in X \wedge\bar{\gamma}$ is an $L$-cardinal'' is absolute between $L[\bar{G}]$ and $L[G]$. So by absoluteness, in $L[G]$,
\begin{equation}\label{first equ in section one}
\text{$\exists X(X\prec  H_{\theta}\wedge |X|=\omega\wedge \gamma\in X \wedge\bar{\gamma}$ is an $L$-cardinal).}
\end{equation}
Since $\gamma$ does not have the weakly reflecting property, $(iii)^{\ast}$ in Corollary \ref{corollary about strong reflecting property} holds and hence, by Corollary \ref{corollary about strong reflecting property}, $(ii)^{\ast}$ holds which contradicts (\ref{first equ in section one}).
\end{proof}

So $K$ is a club in $\omega_2$ of $L$-cardinals with the weakly reflecting property. For $\gamma\in K$, by Proposition \ref{weakly re pro}, there exist a bijection $\pi: \omega_1\leftrightarrow \gamma$ and a stationary set $S\subseteq \omega_1$ such that for any $\theta\in S, o.t.(\{\pi(\alpha)\mid \alpha<\theta\}) $ is an $L$-cardinal(let $\pi_{\gamma}$ and $S_{\gamma}$ be such $\pi$ and $S$). Then $S_{\gamma}$ is stationary for $\gamma\in K$.

\begin{definition}
Suppose $\kappa $ is a regular cardinal and $\{P_i: i\in I\}$ is a collection of partial orders. The $\kappa$-product of $\{P_i: i\in I\}$ is defined as $P=\{p: dom(p)=I\wedge\forall i\in I (p(i)\in P_i)\wedge|suppt(p)|<\kappa\}$ where $suppt(p)=\{i\in I : p(i)\neq 1_{P_i}\}$.
\end{definition}

Let $P$ be the $\omega_1$-product of $\{P_{\gamma}: \gamma\in K\}$ where $P_{\gamma}$ is the Harrington forcing to shoot a club through $S_{\gamma}$. Since $CH$ holds in $L[G], |P_{\gamma}|=\omega_1$ for $\gamma\in K$.
\begin{fact}\label{fact on product forcing}
(\cite{Jech})\quad Assume $\kappa^{<\kappa}=\kappa$. If for every $i\in I, |P_i|\leq\kappa$, then the $\kappa$-product of $P_i$ satisfies $\kappa^{+}$-c.c.
\end{fact}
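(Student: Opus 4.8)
The plan is to run the classical $\Delta$-system (quasi-disjointness) argument for products. First I would observe that the hypothesis $\kappa^{<\kappa}=\kappa$ already forces $\kappa$ to be regular — if $\kappa$ were singular then $\kappa^{\mathrm{cf}(\kappa)}>\kappa$ by K\"onig's theorem while $\mathrm{cf}(\kappa)<\kappa$ would give $\kappa^{<\kappa}>\kappa$ — so $\kappa^{+}$ is regular and asking for $\kappa^{+}$-c.c.\ is the natural statement. Given any family $\{p_{\xi}:\xi<\kappa^{+}\}$ of conditions in the $\kappa$-product $P$ of $\{P_{i}:i\in I\}$, each support $suppt(p_{\xi})$ is a subset of $I$ of size $<\kappa$, and the goal is to produce two compatible members.

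Next I would apply the generalized $\Delta$-system lemma (in the form: if $\lambda>\mu$ is regular and $|\alpha|^{<\mu}<\lambda$ for all $\alpha<\lambda$, then any family of $\lambda$ sets each of size $<\mu$ has a $\Delta$-subsystem of size $\lambda$; see e.g.\ \cite{Jech}) with $\mu=\kappa$ and $\lambda=\kappa^{+}$. Its hypotheses hold precisely because $\kappa^{+}$ is regular and $|\alpha|^{<\kappa}\leq\kappa^{<\kappa}=\kappa<\kappa^{+}$ for every $\alpha<\kappa^{+}$. This yields $A\in[\kappa^{+}]^{\kappa^{+}}$ and a root $r\subseteq I$ with $|r|<\kappa$ such that $suppt(p_{\xi})\cap suppt(p_{\eta})=r$ whenever $\xi\neq\eta$ lie in $A$.

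Then I would count restrictions to the root: each $p_{\xi}\upharpoonright r$ belongs to $\prod_{i\in r}P_{i}$, whose size is at most $\kappa^{|r|}\leq\kappa^{<\kappa}=\kappa$; since $|A|=\kappa^{+}>\kappa$, two distinct indices $\xi,\eta\in A$ must satisfy $p_{\xi}\upharpoonright r=p_{\eta}\upharpoonright r$. Finally I would check compatibility: the function $q$ on $I$ that agrees with $p_{\xi}$ on $suppt(p_{\xi})$, with $p_{\eta}$ on $suppt(p_{\eta})$, and equals $1_{P_{i}}$ elsewhere is well-defined since the two supports meet exactly in $r$, where the conditions coincide; it has support of size $<\kappa$ (a union of two small sets, $\kappa$ regular), so $q\in P$, and it extends both $p_{\xi}$ and $p_{\eta}$. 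Hence $\{p_{\xi}:\xi<\kappa^{+}\}$ is not an antichain, which is what we want.

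I do not expect a genuine obstacle here; this is folklore. The only points needing attention are citing the correct strengthened form of the $\Delta$-system lemma and noticing that $\kappa^{<\kappa}=\kappa$ is used twice: once to supply the cardinal-arithmetic hypothesis of that lemma, and once to bound the number of possible restrictions $p_{\xi}\upharpoonright r$ of conditions to the root $r$.
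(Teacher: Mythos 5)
Your proof is correct, and it is the standard $\Delta$-system argument; the paper itself gives no proof of this Fact, citing \cite{Jech} for it, and your argument is essentially the one found there. You correctly identify the two uses of $\kappa^{<\kappa}=\kappa$ (verifying the cardinal-arithmetic hypothesis of the generalized $\Delta$-system lemma at $\kappa^{+}$, and bounding the number of restrictions to the root by $\kappa^{|r|}\leq\kappa$), and the observation that this hypothesis already forces $\kappa$ to be regular is exactly what legitimizes applying the lemma with $\mu=\kappa$. The only cosmetic quibble is that closing the support of $q$ under a union of two sets of size $<\kappa$ needs no regularity; otherwise nothing is missing.
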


In $L[G], \omega_1^{<\omega_1}=\omega_1$. By Fact \ref{fact on product forcing}, $P$ is $\omega_2$-c.c. For  $\gamma\in K, P_{\gamma}$ is $\omega$-distributive and hence preserves $\omega_1$. The proof of the following lemma imitates Lemma 2.4 in \cite{schindler3}. 

\begin{lemma}
$P$ is $\omega$-distributive.
\end{lemma}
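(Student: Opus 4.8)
The plan is to show that below any condition $p$, every $P$-name $\dot f$ for a function $\omega\to\mathrm{Ord}$ is completely decided by some $q\leq p$, which is equivalent to $\omega$-distributivity. Fix such $p$ and $\dot f$, fix a large regular cardinal $\theta$, and --- the crucial choice, discussed below --- a countable $N\prec H_\theta^{L[G]}$ with $P,p,\dot f$ and $\langle(\pi_\gamma,S_\gamma):\gamma\in K\rangle$ in $N$; write $\delta=N\cap\omega_1$. I would also take each $S_\gamma$ to be the \emph{largest} stationary set witnessing the weakly reflecting property of $\gamma$, namely $S_\gamma=\{\nu<\omega_1:o.t.(\pi_\gamma[\nu])\text{ is an }L\text{-cardinal}\}$; this is still stationary by Proposition~\ref{weakly re pro}, and by Proposition~\ref{equivalent forms of strong reflecting property} it is equally good for Step Two. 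Inside $N$ I run a fusion: recursively build $p=p_0\geq p_1\geq\cdots$ with $p_n\in N\cap P$ so that $p_{n+1}$ lies in the dense open set $\{q:q\text{ decides }\dot f(n)\}$ \emph{and} every coordinate that ever appears in some $\mathrm{supp}(p_m)$ gets, cofinally often, ``pushed up'': at suitable stages we replace the current value $p_n(\gamma)$ by $p_n(\gamma)\cup\{\xi\}$ for some $\xi\in S_\gamma$ chosen above everything relevant so far. Because the sets $\{q:q\text{ decides }\dot f(n)\}$ are open, the pushing up does not undo decisions already made, and a routine pairing-function bookkeeping (using $\mathrm{cf}(\delta)=\omega$, and that every ordinal $<\delta$ and every $p_m$ lies in $N$) ensures that every coordinate of $\bigcup_m\mathrm{supp}(p_m)\subseteq N\cap K$ is activated cofinally.

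Then put $\mathrm{supp}(q)=\bigcup_m\mathrm{supp}(p_m)$ and $q(\gamma)=\bigcup_m p_m(\gamma)\cup\{\delta\}$ for $\gamma\in\mathrm{supp}(q)$. Each $q(\gamma)$ is a closed bounded subset of $\omega_1$ --- the only fresh limit point below $\delta+1$ is $\delta=\sup_m\max(p_m(\gamma))$, and it has been inserted --- so $q$ is a legitimate condition of $P$ \emph{provided} $\delta\in S_\gamma$ for every $\gamma\in\mathrm{supp}(q)$; granting this, $q\leq p_m$ for all $m$, so $q$ decides all of $\dot f$ and we are done. Since for $\gamma\in N$ one has $\pi_\gamma[\delta]=N\cap\gamma$, the proviso reduces to the single requirement
\[
o.t.(N\cap\gamma)\ \text{is an }L\text{-cardinal for every }\gamma\in N\cap K.
\]

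This is the one point where one must use that $\kappa$ is genuinely remarkable, not merely that every $\gamma\in K$ has the weakly reflecting property (that property says nothing about $\bigcap_{\gamma\in N\cap K}S_\gamma$, which can fail to contain $\delta$ for a ``random'' $N$). By Lemma~\ref{key lemma on remarkable cn}, in $L[\bar G]$ the set of $X\in[H_\theta^{L[\bar G]}]^\omega$ that condense remarkably is stationary, and since $Col(\kappa,<\lambda)$ is $\sigma$-closed this set is still stationary in $L[G]$. If $X=\mathrm{ran}(\pi)$ with $\pi:(H_\beta^{L[\bar G\upharpoonright\alpha]},\in,H_\beta^{L},\bar G\upharpoonright\alpha)\to(H_\theta^{L[\bar G]},\in,H_\theta^{L},\bar G)$, $\alpha=\mathrm{crit}(\pi)<\beta<\kappa$ and $\beta$ regular in $L$, then for any $L$-cardinal $\gamma\in X$ with $\gamma\geq\kappa$ we get $o.t.(X\cap\gamma)=\pi^{-1}(\gamma)<\beta$, and $\pi^{-1}(\gamma)$ is a cardinal in $H_\beta^{L}=L_\beta$, which forces $\pi^{-1}(\gamma)$ to be an $L$-cardinal because $\beta$ is a regular $L$-cardinal. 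So I would take $N\prec H_\theta^{L[G]}$ with $P,p,\dot f$ and all the parameters (and $\bar G$, $H_\theta^{L[\bar G]}$) in $N$ and with $X:=N\cap H_\theta^{L[\bar G]}$ condensing remarkably; then $\delta=N\cap\omega_1=\mathrm{crit}(\pi)<\omega_1$ and, for $\gamma\in N\cap K$ (so $\gamma\in X$ and $\gamma\geq\kappa$), $o.t.(N\cap\gamma)=o.t.(X\cap\gamma)$ is an $L$-cardinal, as required.

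The main obstacle is exactly securing the displayed requirement: manufacturing a single countable $N$ whose transitive collapse is \emph{simultaneously} correct about ``being an $L$-cardinal'' on all of $N\cap K$. Once such $N$ is in hand everything else --- the fusion, the diagonal bookkeeping activating every coordinate, the closure of each $q(\gamma)$ below $\delta$, and keeping the $p_n$ inside $N$ --- is routine and follows the pattern of Lemma~2.4 of \cite{schindler3}.
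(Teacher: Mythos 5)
Your proof is correct and shares the paper's overall architecture: a fusion over a countable elementary submodel producing a condition $q$ in which every coordinate is capped by $\delta=N\cap\omega_1$, so that everything reduces to showing $\delta\in S_\lambda$ for every $\lambda$ in the support. Where you genuinely diverge is in how that one point is secured. The paper never goes back to remarkability here: it first picks $Y\prec H_{\omega_3}$ of size $\omega_1$ with $\gamma:=Y\cap\omega_2\in K$, then a countable $X$ with $Y\in X$ and $X\cap\omega_1\in S_\gamma$, using only the stationarity of the \emph{single} set $S_{\gamma}$; the implicit point is that once $o.t.(X\cap\gamma)$ is an $L$-cardinal, $o.t.(X\cap\lambda)$ is automatically an $L$-cardinal for every $L$-cardinal $\lambda\in X\cap\gamma$, because by elementarity $L_{o.t.(X\cap\gamma)}$ thinks $o.t.(X\cap\lambda)$ is a cardinal, and $L_{\bar\gamma}$ computes cardinals correctly when $\bar\gamma$ really is one. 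So your claim that one \emph{must} re-invoke remarkability (rather than just the weakly reflecting property of each $\gamma\in K$) is not accurate: the downward propagation from the top cardinal $Y\cap\omega_2$ is exactly the device that lets the paper get by with Proposition \ref{weakly re pro} alone, and this matters because the concluding section explicitly isolates ``a club in $\omega_2$ of $L$-cardinals with the weakly reflecting property'' as the only input Step One must deliver. That said, your route through Lemma \ref{key lemma on remarkable cn} (a remarkably condensing $X$, with stationarity preserved by the $\sigma$-closed second collapse) is valid, and you make explicit something the paper glosses over: both arguments need $\delta\in S_\lambda$ itself, not merely that $o.t.(\pi_\lambda[\delta])$ is an $L$-cardinal, so the $S_\lambda$ must be taken to be the maximal witnessing sets (as you arrange) or at least chosen coherently — with an arbitrary ``some stationary $S$'' as in the paper, the final assertion that $q\in P$ does not literally follow. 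In short: same skeleton, a different and somewhat more heavily armed justification of the key closure point, plus a worthwhile explicit repair.
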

\begin{proof}
For $\gamma\in K$, we may view $P_{\gamma}$ as the set of all strictly increasing and continuous sequences $(\eta_{i}: i\leq\alpha)$ of countable successor length consisting of elements of $S_{\gamma}$.  For $p\in P$, we may write $p=\lbrace (\eta^{\lambda}_{i}(p): i\leq\alpha_{\lambda}(p)): \lambda\in suppt(p)\rbrace$. Let $\overrightarrow{D}=(D_n: n\in \omega)$ be a sequence of dense open subsets of $P$. Let $p\in P$. Pick some $Y\prec H_{\omega_3}$ such that $\omega_1\cup \lbrace p, P, \overrightarrow{D}\rbrace\subseteq Y, Y\cap \omega_2<\omega_2$ and $Y$ is of cardinality $\omega_1$. Let $\gamma=Y\cap \omega_2$. Then $\gamma$ is an $L$-cardinal and $\gamma\in K$. Since $S_{\gamma}$ is stationary, we may pick some countable $X\prec H_{\omega_3}$ such that $\lbrace p, P, \overrightarrow{D}, Y,\gamma\rbrace\subseteq X$ and $X\cap \gamma\in S_{\gamma}$. Then we have $\lbrace p, P, \overrightarrow{D}\rbrace\subseteq X\cap Y\prec Y\prec H_{\omega_3}$. We may therefore build a descending sequence $(p_{n}: n\in\omega)$ of conditions from $P$ such that $p_0=p, \lbrace p_{n}: n\in\omega\rbrace\subseteq X\cap Y, p_{n+1}\in D_n$ and for every $L$-cardinal $\lambda\in X\cap\gamma$ and every $\beta\in X\cap\gamma$ there is some $n\in\omega$ such that $\lambda\in suppt(p_n)$ and $\beta\in \eta^{\lambda}_{i}(p_n)$ for some $i\leq\alpha_{\lambda}(p_n)$. Let us write $\alpha=X\cap\omega_1$ and $q=\lbrace (\eta^{\lambda}_{i}: i\leq\alpha): \lambda\in  X\cap\gamma$ is an $L$-cardinal $\rbrace$ where for every $L$-cardinal $\lambda\in  X\cap\gamma$, if $i<\alpha$, then $\eta^{\lambda}_{i}=\eta^{\lambda}_{i}(p_n)$ for some (all) sufficiently large $n$ and $\eta^{\lambda}_{\alpha}=X\cap\lambda$. It is not hard to check that $q\in P, q\leq_{P} p$ and $q\in D_n$ for all $n\in\omega$.  
\end{proof}

So $P$ preserves $\omega_1$ and hence $P$ preserves all cardinals. Let $H$ be $P$-generic over $L[G]$. Now we work in $L[G, H]$. By $(a)\Leftrightarrow (c)$ in Proposition \ref{equivalent forms of strong reflecting property},
\begin{equation}\label{any elem of K has srp}
\text{$L[G, H]\models$ Any $\alpha\in K$ has  the strong reflecting property.}
\end{equation}

So $K$ is a club in $\omega_2$ of $L$-cardinals with the strong reflecting property.

\subsection{Step Two}\label{same proof section}

In this step, we work in $L[G, H]$ to find some $B_0\subseteq \omega_2$ and  $A\subseteq\omega_1$ such that $L[B_0, A]\models$ ``if $\omega_1\leq\alpha<\alpha_{A}$ is $A$-admissible, then $\alpha$ is an $L$-cardinal with the strong reflecting property" where $\alpha_{A}$ is the least $\alpha$ defined in $L[B_0,A]$ such that $L_{\alpha}[A]\models Z_3$. Then we define a stationary set $S$ and then show that $S$ contains a club.

We still work in $L[G, H]$. Note that $GCH$ holds. Let $(B_0,\gamma^{\ast})$ be such that (a) $\omega_1 < \gamma^{\ast} \leq \omega_2$, (b) $B_0 \subset \gamma^{\ast}$ and $\gamma^{\ast} = (\omega_2)^{L[B_0]}$, (c)  $L_{\gamma^{\ast}}[B_0] \prec L_{\omega_2}[G, H]$ and (d) $\gamma^{\ast}$ is as small as possible. Let $B$ be the theory of $(L_{\gamma^{\ast}}[B_0], B_0)$ with parameters from $\gamma^{\ast}$.\footnote{We define $(B_0,\gamma^{\ast})$ and $B$ in this way so that we can prove Claim \ref{key claim by Woodin}. The proof of Claim \ref{key claim by Woodin} makes full use of our definition of $(B_0,\gamma^{\ast})$ and $B$.} i.e. $B$ denotes the subset of $\gamma^{\ast}$ coded by $T$ where $T$ is the set of pairs $(e,s)$ where $e$ is the G$\ddot{o}$del number of a formula $\phi(x_0,\cdots, x_n), s$ is a sequence $(\alpha_0, \cdots, \alpha_n)$ of ordinals $<\gamma^{\ast}$ and $\phi[\alpha_0, \cdots,\alpha_n]$ holds in $(L_{\gamma^{\ast}}[B_0], B_0)$. 

We work in $L[B_0]$. To define an almost disjoint sequence $\langle\delta_{\beta}^{\ast}\mid \beta<\omega_2\rangle$ on $\omega_1$, we first define a sequence $\langle\sigma_{\beta}^{\ast}\mid \beta<\omega_2\rangle$ such that for each $\beta, \sigma_{\beta}^{\ast}$ is the $L[B_0]$-least $\sigma \subset \omega_1$ such that $\sigma$ has cardinality $\omega_1$ and $\sigma$ is different from $\sigma_{\alpha}^{\ast}$ for any $\alpha<\beta$. Let $\langle s_\alpha\mid \alpha\in\omega_1\rangle\in L[B_0]$ be an $<_{L[B_0]}$-least    enumeration of $\omega_1^{<\omega_1}$. For any $\beta<\omega_2$, define
$\delta_{\beta}^{\ast}=\{\alpha\in\omega_1\mid \exists\eta\in\omega_1(s_\alpha=\sigma_{\beta}^{\ast}\cap \eta)\}$.
It is easy to check that $\langle\delta_{\beta}^{\ast}: \beta<\omega_2\rangle$ is an almost disjoint sequence. By almost disjoint forcing, force $A_0\subseteq\omega_1$ over $L[B_0]$ to code  $B$ such that $\alpha\in B\Leftrightarrow |A_0\cap \delta_{\alpha}^{\ast}|<\omega_1$. The forcing preserves all cardinals.

In the following, we need that $\omega_2^{L[A_0]}=\omega_2^{L[B_0]}$ which motivates Claim \ref{key claim by Woodin}.\footnote{Our original definition of $B$ corresponds to the case $\gamma^{\ast}=\omega_2$ which can not make that $\omega_2^{L[A_0]}=\omega_2^{L[B_0]}$ holds.}

\begin{claim}\label{key claim by Woodin}
$\omega_2^{L[A_0]}=\gamma^{\ast}$.\footnote{I would like to thank W.Hugh Woodin for pointing out the problem in our original definition of $B$ and providing this key claim.} 
\end{claim}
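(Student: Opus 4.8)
The plan is to prove that $A_0$ recovers $B_0$, that is, that $B_0\in L[A_0]$, and then to read the claim off from cardinal preservation. Granting $B_0\in L[A_0]$ we have $L[B_0]\subseteq L[A_0]\subseteq L[B_0,A_0]$; the almost disjoint forcing that added $A_0$ preserves all cardinals of $L[B_0]$, so $L[B_0]$ and $L[B_0,A_0]$ have the same cardinals, and since a cardinal of any model is a cardinal of each of its inner models we get $\mathrm{Card}^{L[B_0,A_0]}\subseteq\mathrm{Card}^{L[A_0]}\subseteq\mathrm{Card}^{L[B_0]}=\mathrm{Card}^{L[B_0,A_0]}$, so that $L[A_0]$, $L[B_0]$ and $L[B_0,A_0]$ all have exactly the same cardinals; in particular $\omega_2^{L[A_0]}=\omega_2^{L[B_0]}=\gamma^*$. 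Thus it suffices to prove $B_0\in L[A_0]$.

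Recall that $A_0$ codes $B$, the subset of $\gamma^*$ encoding the complete first order theory of $(L_{\gamma^*}[B_0],B_0)$ with ordinal parameters $<\gamma^*$, by way of the almost disjoint system $\langle\delta_\alpha^*\mid\alpha<\gamma^*\rangle$ (via $\alpha\in B\Leftrightarrow|A_0\cap\delta_\alpha^*|<\omega_1$), and that this system, together with $\langle s_\alpha\mid\alpha<\omega_1\rangle$ and $\langle\sigma_\alpha^*\mid\alpha<\gamma^*\rangle$, is definable over $L_{\gamma^*}[B_0]$; indeed $L_{\gamma^*}[B_0]=H_{\omega_2}^{L[B_0]}$ carries a definable wellordering of length $\gamma^*$, so every element of $L_{\gamma^*}[B_0]$ is definable there from an ordinal $<\gamma^*$, and hence knowing $B$ amounts to knowing the whole structure $L_{\gamma^*}[B_0]$; moreover for a club of $\eta<\gamma^*$ with $L_\eta[B_0]\prec_{\Sigma_1}L_{\gamma^*}[B_0]$ one already has $\langle s_\alpha\mid\alpha<\omega_1\rangle$ and $\langle\sigma_\alpha^*,\delta_\alpha^*\mid\alpha<\eta\rangle$ as elements of $L_\eta[B_0]$. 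The idea is to run, inside $L[A_0]$, a recursion of length $\gamma^*$ along this club which reconstructs $B_0\upharpoonright\eta$, equivalently $L_\eta[B_0]$, for larger and larger $\eta$: once $L_\eta[B_0]$ has been reconstructed one has $\langle\delta_\alpha^*\mid\alpha<\eta\rangle$, hence, by decoding $A_0$ against these, the fragment of $B$ with parameters $<\eta$, i.e.\ the theory of $L_{\gamma^*}[B_0]$ with parameters $<\eta$; since the next point $\eta'$ of the club, and with it the whole block $[\eta,\eta')$, is definable over $L_{\gamma^*}[B_0]$ from parameters $<\eta$, that fragment of $B$ pins down $B_0$ on $[\eta,\eta')$ and the reconstruction is pushed strictly forward, while at limit stages one takes unions. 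Carried through to $\gamma^*$ this gives $L_{\gamma^*}[B_0]\in L[A_0]$ and hence $B_0\in L[A_0]$.

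The hard part --- and the reason the construction fixes $(B_0,\gamma^*)$ with $\gamma^*$ least rather than taking $\gamma^*=\omega_2$ as in the earlier version, where exactly this step broke down --- is making this recursion actually go through: one has to get it off the ground, to check at each stage that the fragment of $B$ decoded so far does strictly extend the reconstructed part of $B_0$, and, most of all, to see that the successive stages exhaust $\gamma^*$ rather than stalling below it, i.e.\ that no ordinal $<\gamma^*$ stays undefinable over $L_{\gamma^*}[B_0]$ from the parameters available at earlier stages (in particular that the blocks $[\eta,\eta')$ never become too long to be pinned down from parameters $<\eta$). This closure property is precisely what clause (d) buys, by a condensation/minimality analysis of $L_{\gamma^*}[B_0]$, and is the main obstacle. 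The remaining ingredients are routine --- notably the standard condensation fact that in $L[B_0]$ every subset of $\omega_1$ is constructed before $\omega_2^{L[B_0]}$, so that $L_{\gamma^*}[B_0]=H_{\omega_2}^{L[B_0]}$ and $\langle\sigma_\alpha^*\mid\alpha<\gamma^*\rangle$ enumerates all of $P(\omega_1)^{L[B_0]}$.
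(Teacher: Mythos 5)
Your endgame (granting $B_0\in L[A_0]$, conclude by cardinal preservation that $\omega_2^{L[A_0]}=\omega_2^{L[B_0]}=\gamma^{\ast}$) is fine, and your general shape --- decode $B$, hence $B_0$, from $A_0$ via the almost disjoint system and invoke the minimality clause (d) --- is the right one. But the step you yourself flag as ``the hard part,'' namely that the reconstruction does not stall below $\gamma^{\ast}$, is not a remaining technicality: it \emph{is} the claim, and you have only asserted that clause (d) delivers it ``by a condensation/minimality analysis'' without giving that analysis. Worse, the closure property you appeal to --- that no ordinal $<\gamma^{\ast}$ stays undefinable over $L_{\gamma^{\ast}}[B_0]$ from parameters available at earlier stages --- is false as a statement about definability over $L_{\gamma^{\ast}}[B_0]$: there is a club of $\lambda<\gamma^{\ast}$ closed under the definable Skolem functions of $L_{\gamma^{\ast}}[B_0]$, and a recursion driven purely by ``the theory with parameters $<\eta$ pins down the next block'' genuinely stalls at every such $\lambda$. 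Clause (d) does not rule these closure points out; it only rules out those closure points $\lambda$ which in addition satisfy $\lambda=\omega_2^{L[B_0\cap\lambda]}$, and verifying that extra condition at the relevant $\lambda$ is where $A_0$ must re-enter the argument. Your proposal never supplies this.

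The missing idea is to anchor the argument at $\lambda:=\omega_2^{L[A_0]}$ rather than trying to push a recursion to $\gamma^{\ast}$. One first shows $B_0\cap\lambda$, hence $\langle\delta^{\ast}_{\alpha}\mid\alpha<\lambda\rangle$, hence $B\cap\lambda$ (the theory of $(L_{\gamma^{\ast}}[B_0],B_0)$ with parameters $<\lambda$) all lie in $L[A_0]$. Now the deadlock is broken not by further definability over $L_{\gamma^{\ast}}[B_0]$ but by the choice of $\lambda$: if some ordinal in $[\lambda,\gamma^{\ast})$ were definable in $L_{\gamma^{\ast}}[B_0]$ from parameters $<\lambda$, then from $B\cap\lambda$ one could produce inside $L[A_0]$ a subset of $\omega_1$ coding an ordinal $\geq\lambda$, contradicting $\lambda=\omega_2^{L[A_0]}$ (every ordinal $<\gamma^{\ast}$ is coded by a subset of $\omega_1$ in $H_{\omega_2}^{L[B_0]}=L_{\gamma^{\ast}}[B_0]$). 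Hence $L_{\lambda}[B_0]\prec L_{\gamma^{\ast}}[B_0]$, the pair $(B_0\cap\lambda,\lambda)$ satisfies clauses (a)--(c), and minimality (d) gives $\gamma^{\ast}\leq\lambda$; the reverse inequality is your cardinal-preservation observation, which needs only $L[A_0]\subseteq L[B_0,A_0]$. Note that the paper never proves the full statement $B_0\in L[A_0]$ as an intermediate step --- it only needs $B_0\cap\lambda\in L[A_0]$ at this one $\lambda$, and $B_0\in L[A_0]$ falls out afterwards once $\lambda=\gamma^{\ast}$ is known.
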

\begin{proof}
Let $\lambda=\omega_2^{L[A_0]}$.  It follows from the definition of $(\sigma_{\alpha}^{\ast}: \alpha < \omega_2)$ that (i) $B_0\cap \lambda \in L[A_0]$ and hence (ii) $\lambda = \omega_2^{L[B_0\cap\lambda]}$. By (i) and (ii), we have (iii) $B \cap \lambda \in L[A_0]$. By the definition of $B$, it follows that (iv) $S \in L[A_0]$ where $S$ is the theory of $(L_{\gamma^{\ast}}[B_0], B_0)$ with parameters from $\lambda$.  From the definition of $B$ and the fact that $A_0$ codes $B$, by (iv) it follows that $L_{\lambda}[B_0] \prec L_{\gamma^{\ast}}[B_0]$ and so by (c) in the definition of $(B_0,\gamma^{\ast}), \lambda = \gamma^{\ast}$.
\end{proof}
 
Now we work in $L[A_0]$. Let $E=K\cap \{\eta\mid L_{\eta}[A_0]\prec L_{\omega_2}[A_0]\}$.  Let \[D=\{\gamma>\omega_1 \mid  (L_{\gamma}[A_0, E], E\cap\gamma)\prec (L_{\omega_2}[A_0,E], E)\}.\]
Note that $D\subseteq E$. Define  $F: \mathcal{P}(\omega_1)\rightarrow \mathcal{P}(\omega_1)$ as follows: If $y\subseteq\omega_1$ codes $\gamma$, then $F(y)\subseteq \omega_1$ codes $(\beta, E\cap\beta)$ where $\beta$ is the least element of $D$ such that $\beta>\gamma$(since $D$ is a club in $\omega_2$, such $\beta$ exists); If $y$ does not code an ordinal, let $F(y)=\emptyset$.

By the similar construction of $\langle\delta_{\beta}^{\ast}: \beta<\omega_2\rangle$, we can define an almost disjoint sequence $\langle\delta_{\beta}\mid \beta<\omega_2\rangle$ on $\omega_1$. We first define a sequence $\langle\sigma_{\beta}\mid \beta<\omega_2\rangle$ such that for each $\beta, \sigma_{\beta}$ is the $<_{L[A_0,E]}$-least $\sigma \subset \omega_1$ such that $\sigma$ has cardinality $\omega_1$ and $\sigma$ is different from $\sigma_{\alpha}$ for any $\alpha<\beta$. Let $\langle t_\alpha\mid \alpha\in\omega_1\rangle\in L[A_0,E]$ be a $<_{L[A_0,E]}$-least   enumeration of $\omega_1^{<\omega_1}$. Then $\langle\delta_{\beta}: \beta<\omega_2\rangle$ is a sequence of almost disjoint subset of $\omega_1$ where  $\delta_{\beta}=\{\alpha\in\omega_1\mid \exists \eta\in\omega_1(t_\alpha=\sigma_{\beta}\cap \eta)\}$.

Let $\langle x_{\alpha}\mid \alpha<\omega_2\rangle$ be the enumeration of $\mathcal{P}(\omega_1)$ in  $L[A_0,E]$ in the order of construction. Define
\[Z_{F}=\{\alpha\cdot\omega_1 + \beta\mid   \alpha<\omega_2\wedge \beta\in F(x_{\alpha})\}.\]
By almost disjoint forcing, we get $A_1\subseteq \omega_1$ such that $\beta\in Z_{F}\Leftrightarrow |A_1\cap \delta_{\beta}|<\omega_1$. Let $A=(A_0,A_1)$. The forcing preserves all cardinals.

Now we work in $L[B_0,A]$. Let $\alpha_{A}$ be the least $\alpha$ such that $L_{\alpha}[A]\models Z_3$. Note that $\omega_1^{L[A]}<\alpha_{A}<\omega_2^{L[A]}$ since $Z_3$ proves that $\omega_1$ exists.\footnote{Note that $\omega_1^{L[A]}=\omega_1^{L[B_0,A]}$ and $\omega_2^{L[A]}=\omega_2^{L[B_0,A]}$ by Claim \ref{key claim by Woodin}.} We show that in $L[B_0,A]$, \begin{equation}\label{key resut in section two}
\text{if $\omega_1\leq\alpha<\alpha_{A}$ is $A$-admissible, then $\alpha$ is an $L$-cardinal with the strong reflecting property.}
\end{equation}

By (\ref{any elem of K has srp}) and Proposition \ref{reflecting},$L_{\omega_2}[G,H]\models\omega_1$ has the strong reflecting property. By $(d)$ in the definition of $(B_0,\gamma^{\ast})$ and Proposition \ref{upward abso for reflecting proper}, $L[B_0,A]\models\omega_1$ has the strong reflecting property. Suppose $\omega_1<\alpha<\alpha_{A}$ is $A$-admissible. Define
\begin{equation}\label{def of gamma zero}
\gamma_0=\sup(\alpha\cap D).
\end{equation}

If $\alpha\cap D=\emptyset$, let $\gamma_0=0$.  Note that if $\gamma_0>0$, then $\gamma_0\in D$. We assume that $\gamma_0<\alpha$ and try to get a contradiction. It suffices to consider the case $\gamma_0>0$. Let $\alpha_0$ be the least $A_0$-admissible ordinal such that $\alpha_0>\gamma_0$. Since $\alpha$ is $A_0$-admissible, $\alpha_0\leq\alpha$.

\begin{claim}\label{cliam on eauaiont}
$E\cap\alpha_0=E\cap (\gamma_0+1).$
\end{claim}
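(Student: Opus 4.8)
The plan is to prove the two inclusions separately, with essentially all the content going into one of them, which reduces to a reflection argument.

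First I would record the easy facts. Since $\alpha_0$ is $A_0$-admissible it is a limit ordinal, so $\alpha_0>\gamma_0$ gives $\gamma_0+1\le\alpha_0$ and hence $E\cap(\gamma_0+1)\subseteq E\cap\alpha_0$; note also that $\alpha_0$ exists because $\alpha$ itself is $A_0$-admissible with $\alpha>\gamma_0$, so $\alpha_0\le\alpha$, as remarked just before the claim. Moreover we are in the case $\gamma_0>0$, so $\alpha\cap D\neq\emptyset$, and since $D$ is a club in $\omega_2$ with $\gamma_0=\sup(\alpha\cap D)<\alpha<\omega_2$ we get $\gamma_0\in D\subseteq E$. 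Consequently the claim is equivalent to the assertion that no member of $E$ lies strictly between $\gamma_0$ and $\alpha_0$, i.e. $E\cap(\gamma_0,\alpha_0)=\emptyset$.

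The key step is that every $\eta\in E$ is $A_0$-admissible. By definition of $E$, $\eta\in E$ implies $L_\eta[A_0]\prec L_{\omega_2}[A_0]$, where $\omega_2$ denotes $\omega_2^{L[A_0]}$. Since $\omega_2^{L[A_0]}$ is a regular (successor) cardinal of $L[A_0]$ and $A_0\subseteq\omega_1$, we have $(L_{\omega_2}[A_0],\in,A_0)=(H_{\omega_2}^{L[A_0]},\in,A_0)\models ZFC^{-}$, in particular this structure models Kripke--Platek set theory $\mathrm{KP}$; because $A_0\subseteq\omega_1\le\eta$, elementarity transfers $\mathrm{KP}$ down to $(L_\eta[A_0],\in,A_0)$, which is therefore admissible, i.e. $\eta$ is $A_0$-admissible. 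Now if some $\eta\in E$ satisfied $\gamma_0<\eta<\alpha_0$, then $\eta$ would be an $A_0$-admissible ordinal strictly between $\gamma_0$ and $\alpha_0$, contradicting the choice of $\alpha_0$ as the least $A_0$-admissible ordinal exceeding $\gamma_0$. Hence $E\cap(\gamma_0,\alpha_0)=\emptyset$, and combining with the previous paragraph, $E\cap\alpha_0=E\cap(\gamma_0+1)$.

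I do not expect a genuine obstacle here: the only points deserving a word of care are the identification of $L_{\omega_2}[A_0]$ with $H_{\omega_2}^{L[A_0]}$ (so that it satisfies enough of $ZFC^{-}$ to reflect $\mathrm{KP}$ downward), together with the routine bookkeeping of the degenerate subcases, which have already been excluded or handled above. If anything is delicate, it is merely making explicit that elementarity of levels of the form $L_\eta[A_0]$ in $L_{\omega_2}[A_0]$ suffices to secure admissibility of $\eta$ relative to $A_0$.
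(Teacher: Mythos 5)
Your proof is correct, and it reaches the conclusion by a slightly different mechanism than the paper. The paper's argument takes $\gamma\in E\cap\alpha_0$ with $\gamma>\gamma_0$ and uses $L_{\gamma}[A_0]\prec L_{\omega_2}[A_0]$ to reflect the \emph{definition} of $\alpha_0$ (the least $A_0$-admissible ordinal above $\gamma_0$, a parameter available in $L_{\gamma}[A_0]$ since $\gamma_0<\gamma$) down into $L_{\gamma}[A_0]$, which forces $\alpha_0\leq\gamma$ and contradicts $\gamma<\alpha_0$. You instead show that every $\eta\in E$ is itself $A_0$-admissible, because $L_{\eta}[A_0]\prec L_{\omega_2}[A_0]=H_{\omega_2}^{L[A_0]}$ and the latter satisfies $ZFC^{-}$, hence in particular Kripke--Platek set theory with the predicate $A_0$; so no member of $E$ can lie strictly between $\gamma_0$ and $\alpha_0$ by the minimality of $\alpha_0$. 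The two arguments lean on the same underlying facts, but yours sidesteps the absoluteness of ``least $A_0$-admissible ordinal above $\gamma_0$'' between $L_{\gamma}[A_0]$ and $L_{\omega_2}[A_0]$ that the paper's definability step implicitly uses, at the cost of the (routine) verification that elementarity in a $ZFC^{-}$ model yields admissibility relative to $A_0$. Both are complete; your handling of the trivial inclusion $E\cap(\gamma_0+1)\subseteq E\cap\alpha_0$ and of the degenerate cases matches the paper's setup.
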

\begin{proof}
We show that $E\cap\alpha_0\subseteq E\cap (\gamma_0+1)$. Suppose $\gamma\in E\cap\alpha_0$ and $\gamma>\gamma_0$. Since $\gamma\in E, L_{\gamma}[A_0]\prec L_{\omega_2}[A_0]$. Since $\alpha_0$ is definable from $\gamma_0$ and $A_0$, $\alpha_0$ is definable in $L_{\gamma}[A_0]$. So  $\alpha_0\leq\gamma$. Contradiction.
\end{proof}

By Claim \ref{cliam on eauaiont}, $L_{\alpha_0}[A_0, E]=L_{\alpha_0}[A_0, E\cap\gamma_0]$.

We need the following lemma to get that $L_{\gamma_0}[A_0, E\cap\gamma_0][A_1]=L_{\gamma_0}[A]$ in Claim \ref{key calim on countable}. 

\begin{lemma}\label{key claim in step}
$E\cap\gamma_0\in L_{\gamma_0+1}[A].$
\end{lemma}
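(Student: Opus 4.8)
The plan is to recover $E\cap\gamma_0$ from $A=(A_0,A_1)$ by a single first-order definition over $(L_{\gamma_0}[A],\in,A_0,A_1)$, using the almost disjoint coding that produced $A_1$; the whole point is one of \emph{locality}. First I would record the consequences of $\gamma_0\in D$ (hence $\gamma_0\in E$, since $D\subseteq E$): $L_{\gamma_0}[A_0]\prec L_{\omega_2}[A_0]$, and $\gamma_0$ is closed under ordinal addition and multiplication. Also, $E$ is definable in $L[A_0]$ (from a satisfaction predicate for $L_{\omega_2}[A_0]$), so $L[A_0,E]=L[A_0]$; in particular $\mathcal P(\omega_1)^{L[A_0,E]}=\mathcal P(\omega_1)^{L[A_0]}$ and $<_{L[A_0,E]}$ agrees with $<_{L[A_0]}$ on these. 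Consequently the whole coding apparatus $\langle\sigma_\beta\rangle,\langle t_\alpha\rangle,\langle\delta_\beta\rangle,\langle x_\alpha\rangle,Z_F$ is built from $A_0$ and $<_{L[A_0]}$ alone, and, using $\gamma_0\in D$, the initial segments $\langle x_\alpha:\alpha<\gamma_0\rangle$ (which enumerates $\mathcal P(\omega_1)^{L_{\gamma_0}[A_0]}$) and $\langle\delta_\xi:\xi<\gamma_0\rangle$ coincide with the restrictions of the globally defined objects and are uniformly definable over $L_{\gamma_0}[A_0]$.

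Next comes the decoding. For $\alpha<\gamma_0$ such that $x_\alpha$ codes an ordinal $\gamma_\alpha$ (necessarily $\gamma_\alpha<\gamma_0$, as $x_\alpha\in L_{\gamma_0}[A_0]$), put $W_\alpha=\{\beta<\omega_1:|A_1\cap\delta_{\alpha\cdot\omega_1+\beta}|<\omega_1\}$, where $\alpha\cdot\omega_1+\beta<\gamma_0$ by closure of $\gamma_0$. By the defining property of the generic $A_1$ together with the definition of $Z_F$, $W_\alpha=F(x_\alpha)$, so $W_\alpha$ codes the pair $(\beta_\alpha,E\cap\beta_\alpha)$ with $\beta_\alpha=\min(D\setminus(\gamma_\alpha+1))$; and $\gamma_\alpha<\gamma_0\in D$ forces $\beta_\alpha\le\gamma_0$. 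Conversely every $\gamma<\gamma_0$ equals $\gamma_\alpha$ for some $\alpha<\gamma_0$ (take the $<_{L[A_0]}$-least subset of $\omega_1$ coding $\gamma$, which lies in $L_{\gamma_0}[A_0]$), so $\beta_\alpha>\gamma$ for such $\alpha$ and hence $\sup\{\beta_\alpha:\alpha<\gamma_0,\ x_\alpha\text{ codes an ordinal}\}=\gamma_0$. Combining these,
\[
E\cap\gamma_0=\bigcup\{\,c:\exists\alpha<\gamma_0\ (x_\alpha\text{ codes an ordinal and }W_\alpha\text{ codes a pair }(\beta,c))\,\}.
\]

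It remains to observe that this right-hand side is first-order over $(L_{\gamma_0}[A],\in,A_0,A_1)$: the maps $\alpha\mapsto x_\alpha$ and $\xi\mapsto\delta_\xi$ (for $\alpha,\xi<\gamma_0$) are given by formulas over $L_{\gamma_0}[A_0]$, the notions ``codes an ordinal'' and ``codes a pair'' are absolute for the transitive structure $L_{\gamma_0}[A]$, and ``$|A_1\cap\delta_\xi|<\omega_1$'', i.e.\ countability of a subset of $\omega_1$, is decided correctly there. Hence $E\cap\gamma_0\in L_{\gamma_0+1}[A]$. I expect the main obstacle to be precisely the locality bookkeeping of the first paragraph — verifying that the restrictions of the coding objects to $\gamma_0$ are \emph{both} correct (equal to the globally defined ones) and definable over $L_{\gamma_0}[A_0]$ — and this is where $\gamma_0\in D$ (via $L_{\gamma_0}[A_0]\prec L_{\omega_2}[A_0]$ and the closure of $\gamma_0$) is used essentially. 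It is also why the lemma is stated with $[A]$ and not $[A_0]$: the coded set $A_1$ is genuinely needed, since the naive definition of $E\cap\gamma_0$ over $L_{\gamma_0}[A_0]$ (``$\eta$ is an $L$-cardinal and $L_\eta[A_0]\prec L_{\gamma_0}[A_0]$'') requires a satisfaction predicate for $L_{\gamma_0}[A_0]$ and would only land $E\cap\gamma_0$ in $L_{\gamma_0+n}[A_0]$ for some finite $n>1$.
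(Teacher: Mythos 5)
The central step of your argument --- that the restrictions $\langle x_\alpha:\alpha<\gamma_0\rangle$ and $\langle\delta_\xi:\xi<\gamma_0\rangle$ of the coding apparatus are ``uniformly definable over $L_{\gamma_0}[A_0]$'' --- is exactly where the proof breaks down, and the reason is a circularity that your locality bookkeeping does not escape. The sequences $\langle\sigma_\beta\rangle$, $\langle t_\alpha\rangle$, $\langle\delta_\beta\rangle$ and $\langle x_\alpha\rangle$ are defined via the canonical well-ordering $<_{L[A_0,E]}$ and the order of construction of $L[A_0,E]$, where $E$ enters as a \emph{predicate}. It is true that $E\in L[A_0]$, so $L[A_0,E]=L[A_0]$ as classes; but the hierarchies, and hence the canonical well-orderings, differ (a set definable from $E\cap\xi$ appears by level $\xi+1$ of the $L[A_0,E]$-hierarchy but possibly much later in the $L[A_0]$-hierarchy), so the apparatus is \emph{not} built from $A_0$ and $<_{L[A_0]}$ alone. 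What is true, using $\gamma_0\in D$, is that the initial segment of the apparatus below $\gamma_0$ is definable over $L_{\gamma_0}[A_0,E\cap\gamma_0]$ --- that is, from the very set $E\cap\gamma_0$ you are trying to produce. Your displayed formula for $E\cap\gamma_0$ therefore cannot be evaluated inside $L_{\gamma_0}[A]$ without already having $E\cap\gamma_0$ in hand.

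The paper breaks this circularity by proving the stronger statement that $E\cap\gamma\in L_{\gamma+1}[A]$ for \emph{every} $\gamma\in D\cap\alpha_A$, by induction along $D$. At a successor step of $D$ over $\gamma^{\prime}$, the induction hypothesis supplies $E\cap\gamma^{\prime}\in L_{\gamma^{\prime}+1}[A]$, which (together with $E\cap\eta=E\cap(\gamma^{\prime}+1)$ for $\eta$ the least $A_0$-admissible ordinal above $\gamma^{\prime}$) makes the relevant $x_\xi$'s and $\delta_\xi$'s available in $L_{\eta}[A,E\cap\gamma^{\prime}]$; a single application of $F$ to a code $y$ for $\gamma^{\prime}$ then decodes the next block of $E$. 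This in turn requires a genuinely nontrivial sub-claim --- that $L_{\eta}[A_0,E\cap\gamma^{\prime}]\models\gamma^{\prime}<\omega_2$, proved by a genericity and chain-condition argument together with the minimality of $\alpha_A$ --- to guarantee that such a code $y$ exists with index below $\eta$. Your argument uses neither the induction nor anything about $\alpha_A$, which signals that these steps are missing rather than avoidable. (Your closing observations --- that $A_1$ is genuinely needed and that the naive $A_0$-only definition only lands $E\cap\gamma_0$ in $L_{\gamma_0+n}[A_0]$ for some $n>1$ --- are correct, but they identify the difficulty without resolving it.)
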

\begin{proof}
We prove by induction that for any $\gamma\in D\cap\alpha_A, E\cap\gamma\in L_{\gamma+1}[A]$. Fix $\gamma\in D\cap\alpha_A$. Suppose for any $\gamma^{\prime}\in D\cap\gamma$, $E\cap\gamma^{\prime}\in L_{\gamma^{\prime}+1}[A]$. We show that $E\cap\gamma\in L_{\gamma+1}[A]$. If $\gamma\leq\omega_1$, this is trivial. Suppose $\gamma>\omega_1$. 

Case 1: There is $\gamma^{\prime}\in D$ such that $\gamma$ is the least element of $D$ such that $\gamma>\gamma^{\prime}$. Let $\eta$ be the least $A_0$-admissible ordinal such that $\eta>\gamma^{\prime}$. By the similar argument as Claim \ref{cliam on eauaiont}, $E\cap\eta=E\cap(\gamma^{\prime}+1)$. From our definitions, for any $\beta<\eta$ we have: (1) $\langle x_{\xi}\mid\xi\in\beta\rangle\in L_{\eta}[A_0, E]=L_{\eta}[A_0, E\cap\gamma^{\prime}]$; (2) $\langle \delta_{\xi}\mid\xi\in\beta\rangle\in L_{\eta}[A_0, E]=L_{\eta}[A_0, E\cap\gamma^{\prime}]$; (3) $\langle x_{\xi}\mid\xi\in\eta\rangle$ enumerates $\mathcal{P}(\omega_1)\cap L_{\eta}[A_0, E]=\mathcal{P}(\omega_1)\cap L_{\eta}[A_0, E\cap\gamma^{\prime}]$.

Suppose $y\subseteq\omega_1$ and $y\in L_{\eta}[A_0, E\cap\gamma^{\prime}]$. Then $y=x_{\xi}$ for some $\xi<\eta$. Note that $\xi\cdot\omega+\alpha<\eta$ for any $\alpha<\omega_1$.  $\alpha\in F(y)$ if and only if $|A_1\cap\delta_{\xi\cdot\omega+\alpha}|<\omega_1$. So $F(y)\in L_{\eta}[A_0, E\cap\gamma^{\prime}][A_1]$. Hence we have shown that if $y\in\mathcal{P}(\omega_1)\cap L_{\eta}[A_0, E\cap\gamma^{\prime}]$, then $F(y)\in L_{\eta}[A, E\cap\gamma^{\prime}]$.

\begin{claim}\label{first key calim on countable}
$L_{\eta}[A_0,E\cap\gamma^{\prime}]\models\gamma^{\prime}<\omega_2$.
\end{claim}
\begin{proof}
Suppose not. Then we have
\begin{equation}\label{key equation in sec two}
\text{$\gamma^{\prime}=\omega_2^{L_{\eta}[A_0, E\cap\gamma^{\prime}]}$.}
\end{equation}
Let $P$ be the partial order which codes $Z_{F}$ via $\langle\delta_{\beta}\mid \beta<\omega_2\rangle$.\footnote{$P=[\omega_1]^{<\omega_1}\times [Z_{F}]^{<\omega_1}$. $(p,q)\leq (p^{\prime},q^{\prime})$ \text{if{f}} $p\supseteq  p^{\prime}, q\supseteq q^{\prime}$ and $\forall\alpha\in q^{\prime}(p\cap \delta_{\alpha}\subseteq p^{\prime})$.} From our definitions of $E, F$ and $\langle x_{\alpha}\mid \alpha<\omega_2\rangle$, $P$ is a definable subset of $L_{\omega_2}[A_0, E]$. Standard argument gives that $P$ is $\omega_2$-c.c. in $L_{\omega_2}[A_0,E]$.\footnote{i.e. If $D\subseteq P$ is a maximal antichain with $D\in L_{\omega_2}[A_0,E]$, then $L_{\omega_2}[A_0,E]\models |D|\leq\omega_1$.}  Let $P^{\ast}=P\cap L_{\gamma^{\prime}}[A_0,E]$. Since $\gamma^{\prime}\in D$, 
\begin{equation}\label{property of gamma zero}
(L_{\gamma^{\prime}}[A_0,E], E\cap\gamma^{\prime})\prec (L_{\omega_2}[A_0,E], E).
\end{equation}
Suppose $D^{\ast}\subseteq P^{\ast}$ is a maximal antichain with $D^{\ast}\in L_{\gamma^{\prime}}[A_0,E]$. Then by (\ref{property of gamma zero}), $D^{\ast}$ is a maximal antichain in $P$. Since $L_{\omega_2}[A_0,E]\models |D^{\ast}|\leq\omega_1$, by (\ref{property of gamma zero}), $L_{\gamma^{\prime}}[A_0,E]\models |D^{\ast}|\leq\omega_1$. So $P^{\ast}$ is $\omega_2$-c.c. in $L_{\gamma^{\prime}}[A_0,E]$. 
By (\ref{key equation in sec two}), 
\begin{equation}\label{equation on two}
\text{$L_{\eta}[A_0, E\cap\gamma^{\prime}]\cap 2^{\omega_1}=L_{\gamma^{\prime}}[A_0, E\cap\gamma^{\prime}]\cap 2^{\omega_1}$.}
\end{equation}
Since $P^{\ast}$ is $\omega_2$-c.c. in $L_{\gamma^{\prime}}[A_0,E]$, by (\ref{equation on two}), $P^{\ast}$ is $\omega_2$-c.c in $L_{\eta}[A_0, E\cap\gamma^{\prime}]$.

We show that $A_1$ is generic over $L_{\eta}[A_0, E\cap\gamma^{\prime}]$ for $P^{\ast}$. Let $Y\subseteq P^{\ast}$ be a maximal antichain with $Y\in L_{\eta}[A_0, E\cap\gamma^{\prime}]$. Since $P^{\ast}$ is $\omega_2$-c.c in $L_{\eta}[A_0, E\cap\gamma^{\prime}]$, by (\ref{key equation in sec two}),    $Y\in L_{\gamma^{\prime}}[A_0, E\cap\gamma^{\prime}]$. By (\ref{property of gamma zero}), $Y$ is a maximal antichain in $P$. So the filter given by $A_1$ meets $Y$.

Note that $\gamma^{\prime}=\omega_2^{L_{\eta}[A_0, E\cap\gamma^{\prime}]}=\omega_2^{L_{\eta}[A_0, E\cap\gamma^{\prime}][A_1]}$. Since $\gamma^{\prime}\in D$, by induction hypothesis $L_{\gamma^{\prime}}[A_0, E\cap\gamma^{\prime}][A_1]=L_{\gamma^{\prime}}[A]$. So  $L_{\gamma^{\prime}}[A]\models Z_3$ which contradicts the minimality of $\alpha_{A}$.
\end{proof}

Take $y\in L_{\eta}[A_0, E\cap\gamma^{\prime}]\cap\mathcal{P}(\omega_1)$ such that $y$ codes $\gamma^{\prime}$. So $F(y)$ codes $(\gamma,  E\cap\gamma)$ and $F(y)\in L_{\eta}[A, E\cap\gamma^{\prime}]$. Then $F(y)$ is definable in $L_{\gamma}[A, E\cap\gamma^{\prime}]$. By induction hypothesis, $F(y)\in L_{\gamma+1}[A]$. Since $F(y)$ codes $E\cap\gamma$, $E\cap\gamma\in L_{\gamma+1}[A]$.

Case 2: $\gamma$ is the least element of $D$. Take $y\in L_{\omega_1}[A_0, E]\cap\mathcal{P}(\omega_1)$ such that $y$ codes $0$. Then $y=x_0$. Since $\gamma$ is the least element of $D$ such that $\gamma>0$, $F(y)$ codes $E\cap\gamma$. Note that for any $\beta<\omega_1,\langle\delta_{\xi}\mid\xi\in\beta\rangle\in L_{\omega_1}[A_0, E]$ and $\alpha\in F(y)$ if and only if $|A_1\cap\delta_{\alpha}|$ is countable. So 
$F(y)$ is definable in $L_{\omega_1}[A, E]$. Since $E\cap \omega_1=\emptyset$, 
$F(y)\in L_{\gamma+1}[A]$. Since $F(y)$ codes $E\cap\gamma$, $E\cap\gamma\in L_{\gamma+1}[A]$.

Case 3: $\gamma$ is a limit point of $D$. Then standard argument gives that $E\cap\gamma\in L_{\gamma+1}[A]$ by induction hypothesis. 

Since $\gamma_0\in D\cap\alpha_A$, we have $E\cap\gamma_0\in L_{\gamma_0+1}[A]$.
\end{proof}

\begin{claim}\label{key calim on countable}
$L_{\alpha_0}[A_0, E\cap\gamma_0]\models \gamma_0<\omega_2$.
\end{claim}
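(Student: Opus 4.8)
The plan is to imitate, essentially step for step, the proof already given for Claim \ref{first key calim on countable}, now with $\gamma_0$ playing the role of $\gamma^{\prime}$ and $\alpha_0$ playing the role of $\eta$. The one structural change is that at the place where Claim \ref{first key calim on countable} invoked the inductive hypothesis internal to the proof of Lemma \ref{key claim in step}, we will instead quote the now-established conclusion of Lemma \ref{key claim in step} itself.

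First I would establish the dichotomy. Suppose towards a contradiction that $L_{\alpha_0}[A_0, E\cap\gamma_0]\not\models\gamma_0<\omega_2$. Since $\gamma_0\in D\subseteq E$ we have $(L_{\gamma_0}[A_0,E],E\cap\gamma_0)\prec(L_{\omega_2}[A_0,E],E)$, and the latter structure is $H_{\omega_2}^{L[A_0,E]}$, so by elementarity every ordinal of the interval $(\omega_1,\gamma_0)$ already carries a surjection from $\omega_1$ inside $L_{\gamma_0}[A_0,E\cap\gamma_0]$, hence inside $L_{\alpha_0}[A_0,E\cap\gamma_0]$. Consequently the assumption in fact forces $\gamma_0=\omega_2^{L_{\alpha_0}[A_0,E\cap\gamma_0]}$, and in particular $L_{\alpha_0}[A_0,E\cap\gamma_0]\cap 2^{\omega_1}=L_{\gamma_0}[A_0,E\cap\gamma_0]\cap 2^{\omega_1}$ --- the analogue of equations (\ref{key equation in sec two}) and (\ref{equation on two}).

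Next I would repeat the forcing argument. Let $P$ be the almost disjoint coding forcing coding $Z_F$ via $\langle\delta_\beta\mid\beta<\omega_2\rangle$; as in Claim \ref{first key calim on countable}, $P$ is a definable subclass of $L_{\omega_2}[A_0,E]$ and is $\omega_2$-c.c.\ there by the standard argument, and I set $P^{\ast}=P\cap L_{\gamma_0}[A_0,E]$. Using $\gamma_0\in D$ exactly as in the earlier proof, a maximal antichain of $P^{\ast}$ lying in $L_{\gamma_0}[A_0,E]$ is a maximal antichain of $P$ of size $\leq\omega_1$ there, so $P^{\ast}$ is $\omega_2$-c.c.\ in $L_{\gamma_0}[A_0,E]$, and by the agreement on $2^{\omega_1}$ it is $\omega_2$-c.c.\ in $L_{\alpha_0}[A_0,E\cap\gamma_0]$ as well; moreover every maximal antichain of $P^{\ast}$ in $L_{\alpha_0}[A_0,E\cap\gamma_0]$ then lies in $L_{\gamma_0}[A_0,E\cap\gamma_0]$, hence is maximal in $P$ and is met by the filter determined by $A_1$. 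Thus $A_1$ is $P^{\ast}$-generic over $L_{\alpha_0}[A_0,E\cap\gamma_0]$. Finally, Lemma \ref{key claim in step} gives $E\cap\gamma_0\in L_{\gamma_0+1}[A]$, whence $L_{\gamma_0}[A_0,E\cap\gamma_0][A_1]=L_{\gamma_0}[A]$; since $\gamma_0=\omega_2$ of this model, $L_{\gamma_0}[A]$ computes $\mathcal{P}(\omega)$ correctly and satisfies $ZFC^{-}+``\mathcal{P}(\omega)\ \text{exists}"+``\text{every set has cardinality}\leq\omega_1"$, i.e.\ $L_{\gamma_0}[A]\models Z_3$. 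As $\gamma_0<\alpha<\alpha_A$, this contradicts the minimality of $\alpha_A$.

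I expect the only point demanding genuine care --- as opposed to a literal transcription of Claim \ref{first key calim on countable} --- to be the passage $L_{\gamma_0}[A_0,E\cap\gamma_0][A_1]=L_{\gamma_0}[A]$, i.e.\ checking that the predicate $E\cap\gamma_0$ is absorbed into $L_{\gamma_0}[A]$. In the earlier claim this was supplied by the running induction inside the proof of Lemma \ref{key claim in step}; here it is delivered directly by that lemma's statement, so it is a citation rather than an obstacle, and the remaining steps (the chain-condition transfer and the genericity of $A_1$) are word for word those of Claim \ref{first key calim on countable}.
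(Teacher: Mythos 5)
Your proposal is correct and follows essentially the same route as the paper, which itself proves this claim by declaring it "essentially the same as Claim \ref{first key calim on countable}" with $\eta$ replaced by $\alpha_0$ and $\gamma^{\prime}$ by $\gamma_0$, and by citing Lemma \ref{key claim in step} to obtain $L_{\gamma_0}[A_0,E\cap\gamma_0][A_1]=L_{\gamma_0}[A]$. You correctly identified the one genuine difference from the earlier claim --- that the internal induction hypothesis is now replaced by an appeal to the already-established Lemma \ref{key claim in step} --- which is exactly how the paper handles it.
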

\begin{proof}
The proof is essentially the same as Claim \ref{first key calim on countable}(replace $\eta$ by $\alpha_0$ and $\gamma^{\prime}$ by $\gamma_0$). Suppose not. Then $\gamma_0=\omega_2^{L_{\alpha_0}[A_0, E\cap\gamma_0]}$. Let $P$ be the partial order which codes $Z_{F}$ via $\langle\delta_{\beta}\mid \beta<\omega_2\rangle$ and $P^{\ast}=P\cap L_{\gamma_0}[A_0,E]$.
By the similar argument as Claim \ref{first key calim on countable}, we can show that $A_1$ is generic over $L_{\alpha_0}[A_0, E\cap\gamma_0]$ for $P^{\ast}$. Since $\gamma_0=\omega_2^{L_{\alpha_0}[A_0, E\cap\gamma_0]}=\omega_2^{L_{\alpha_0}[A_0, E\cap\gamma_0][A_1]}$ and by Lemma \ref{key claim in step} $L_{\gamma_0}[A_0, E\cap\gamma_0][A_1]=L_{\gamma_0}[A]$, we have $L_{\gamma_0}[A]\models Z_3$ which contradicts the minimality of $\alpha_{A}$.
\end{proof}

From our definitions, we have
\begin{equation}\label{def of almost disjonit systen}
\text{for $\eta<\alpha_0, \langle\delta_{\beta}: \beta<\eta\rangle\in L_{\alpha_0}[A_0, E]=L_{\alpha_0}[A_0, E\cap\gamma_0]$ and}
\end{equation}
\begin{equation}\label{eumer thid}
\text{$\langle x_{\beta}\mid \beta<\alpha_0\rangle$ enumerates $\mathcal{P}(\omega_1)\cap L_{\alpha_0}[A_0, E]=\mathcal{P}(\omega_1)\cap L_{\alpha_0}[A_0, E\cap\gamma_0]$.}
\end{equation}

\begin{claim}\label{ley claim on definalbity}
If $y\subseteq\omega_1$ and $y\in L_{\alpha_0}[A_0, E\cap\gamma_0]$, then $F(y)\in L_{\alpha_0}[A]$.
\end{claim}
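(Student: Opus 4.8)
The plan is to run the decoding argument from Case~1 of the proof of Lemma~\ref{key claim in step} (the passage beginning ``Suppose $y\subseteq\omega_1$ and $y\in L_{\eta}[A_0,E\cap\gamma^{\prime}]$''), here with $\alpha_0$ in place of $\eta$ and $\gamma_0$ in place of $\gamma^{\prime}$, and then to absorb the auxiliary predicate $E\cap\gamma_0$ into $L_{\alpha_0}[A]$ by invoking Lemma~\ref{key claim in step}.

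First I would dispose of the trivial case: if $y$ does not code an ordinal then $F(y)=\emptyset\in L_{\alpha_0}[A]$, so assume $y$ codes an ordinal. Recall $L_{\alpha_0}[A_0,E]=L_{\alpha_0}[A_0,E\cap\gamma_0]$ by Claim~\ref{cliam on eauaiont}. By (\ref{eumer thid}) there is a unique $\xi<\alpha_0$ with $y=x_{\xi}$, and, since the relevant initial segment of the enumeration $\langle x_{\beta}\mid\beta<\alpha_0\rangle$ lies in $L_{\alpha_0}[A_0,E\cap\gamma_0]$ (by our definitions, just as for the $\delta$-sequence in (\ref{def of almost disjonit systen})), this $\xi$ is computable from $y$ inside $L_{\alpha_0}[A_0,E\cap\gamma_0]$. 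Since $\alpha_0$ is $A_0$-admissible it is closed under ordinal addition and multiplication, and $\omega_1<\alpha_0$, so $\xi\cdot\omega_1+\omega_1<\alpha_0$; hence by (\ref{def of almost disjonit systen}) the sequence $\langle\delta_{\xi\cdot\omega_1+\alpha}\mid\alpha<\omega_1\rangle$ belongs to $L_{\alpha_0}[A_0,E\cap\gamma_0]$. By the definitions of $Z_F$, of $F$, and of the almost disjoint coding of $A_1$, for every $\alpha<\omega_1$,
\[
\alpha\in F(y)=F(x_{\xi})\iff\xi\cdot\omega_1+\alpha\in Z_F\iff|A_1\cap\delta_{\xi\cdot\omega_1+\alpha}|<\omega_1 ,
\]
so $F(y)$ is definable over a level $L_{\nu}[A_0,E\cap\gamma_0]$ with $\nu<\alpha_0$ from the parameters $\xi$ and $A_1$; thus $F(y)\in L_{\alpha_0}[A_0,E\cap\gamma_0][A_1]$.

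It then remains to pass to $L_{\alpha_0}[A]$. By Lemma~\ref{key claim in step}, $E\cap\gamma_0\in L_{\gamma_0+1}[A]$, and $\gamma_0+1<\alpha_0$, so the triple $\langle A_0,E\cap\gamma_0,A_1\rangle$ already appears in $L_{\rho}[A]$ for some $\rho<\alpha_0$. By the standard fact that relativizing the $L$-construction to a parameter available by stage $\rho$ incurs only a bounded delay (so that $L_{\tau}[\langle A_0,E\cap\gamma_0,A_1\rangle]\subseteq L_{\tau'}[A]$ for a $\tau'<\alpha_0$ depending recursively on $\rho$ and $\tau$), together with the closure of the admissible ordinal $\alpha_0$ under ordinal arithmetic, one obtains $L_{\alpha_0}[A_0,E\cap\gamma_0][A_1]\subseteq L_{\alpha_0}[A]$, whence $F(y)\in L_{\alpha_0}[A]$.

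The step I expect to require the most care is this last one --- verifying that adjoining the extra predicate $E\cap\gamma_0$ does not push $F(y)$ beyond stage $\alpha_0$. This is exactly the point of Lemma~\ref{key claim in step}: it places $E\cap\gamma_0$, and via its inductive proof every bounded initial segment of $E$ below $\gamma_0$, inside $L[A]$ at a bounded stage, so that the hierarchies $L_{\beta}[A_0,E\cap\gamma_0,A_1]$ and $L_{\beta}[A]=L_{\beta}[A_0,A_1]$ have the same subsets of $\omega_1$ for every $\beta<\alpha_0$. The remaining ingredients --- that $\xi<\alpha_0$, availability below $\alpha_0$ of the needed segment of the almost disjoint sequence, and closure of $\alpha_0$ under ordinal operations --- are routine given (\ref{def of almost disjonit systen}), (\ref{eumer thid}) and the $A_0$-admissibility of $\alpha_0$.
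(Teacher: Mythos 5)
Your proposal is correct and follows essentially the same route as the paper's proof: identify $y=x_{\xi}$ via (\ref{eumer thid}), use the closure $\xi\cdot\omega_1+\alpha<\alpha_0$ and the decoding $\alpha\in F(y)\Leftrightarrow|A_1\cap\delta_{\xi\cdot\omega_1+\alpha}|<\omega_1$ together with (\ref{def of almost disjonit systen}) to place $F(y)$ in $L_{\alpha_0}[A_0,E\cap\gamma_0][A_1]$, and then invoke Lemma \ref{key claim in step} to identify this with $L_{\alpha_0}[A]$. Your extra remarks (the trivial case, and the justification of the bounded-delay absorption of $E\cap\gamma_0$) only make explicit what the paper leaves implicit.
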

\begin{proof}
Suppose $y\in \mathcal{P}(\omega_1)\cap L_{\alpha_0}[A_0, E\cap\gamma_0]$. By (\ref{eumer thid}), $y=x_{\xi}$ for some $\xi<\alpha_0$. Note that $\xi\cdot\omega_1 +\alpha<\alpha_0$ for  $\alpha<\omega_1$. Then $\alpha\in F(y)$ \text{if{f}} $\xi\cdot\omega_1 +\alpha\in Z_{F}$ \text{if{f}} $|A_1\cap\delta_{\xi\cdot\omega_1+\alpha}|<\omega_1$. By  (\ref{def of almost disjonit systen}), $F(y)\in L_{\alpha_0}[A_0, E\cap\gamma_0][A_1]$. Since by  Lemma \ref{key claim in step}, $E\cap\gamma_0\in L_{\gamma_0+1}[A]$,  $L_{\alpha_0}[A_0, E\cap\gamma_0][A_1]=L_{\alpha_0}[A]$. Hence $F(y)\in L_{\alpha_0}[A]$.
\end{proof}

By Claim \ref{key calim on countable}, there exists  $y\in L_{\alpha_0}[A_0, E\cap\gamma_0]\cap\mathcal{P}(\omega_1)$ such that $y$ codes $\gamma_0$. By the definition of $F$, $F(y)$ codes $\gamma_1$ where $\gamma_1$ is the least element of $E$ such that $\gamma_1>\gamma_0$ and
\begin{equation}\label{least elem of such that}
(L_{\gamma_1}[A_0, E], E\cap\gamma_1)\prec (L_{\omega_2}[A_0,E], E).
\end{equation}

By Claim \ref{ley claim on definalbity}, $F(y)\in L_{\alpha_0}[A]$. Since $F(y)$ codes $\gamma_1$, $\gamma_1<\alpha_0$. Since $\alpha_0\leq\alpha, \gamma_1<\alpha$. By (\ref{least elem of such that}) and (\ref{def of gamma zero}), $\gamma_1\leq\gamma_0$. Contradiction.

So the assumption that $\gamma_0<\alpha$ is false. Then $\gamma_0=\alpha$ and hence $\alpha\in E$. By (\ref{any elem of K has srp}) and Proposition \ref{reflecting}, $L_{\omega_2}[G,H]\models\alpha$ has the strong reflecting property. By $(d)$ in the definition of $(B_0,\gamma^{\ast})$ and Proposition \ref{upward abso for reflecting proper}, $L[B_0,A]\models\alpha$ has the strong reflecting property. We have proved $L[B_0,A]\models$ (\ref{key resut in section two}).

We still work in $L[B_0,A]$. Suppose $Y\prec L_{\alpha_{A}}[A], |Y|=\omega$ and $\bar{Y}$ is the transitive collapse of $Y$. Let $\bar{\omega_1}=Y\cap\omega_1$. Then $\bar{Y}=L_{\bar{\alpha}}[\bar{A}]$ where $\bar{A}=A\cap \bar{\omega_1}$ and  $\bar{\alpha}=o.t.(Y\cap \alpha_{A})$. Note that $\bar{\omega_1}<\omega_1$ and $L_{\bar{\alpha}}[\bar{A}]\models Z_3$. Suppose $\bar{\omega_1}\leq\eta<\bar{\alpha}$ is $\bar{A}$-admissible. By (\ref{key resut in section two}), $\eta$ is an $L$-cardinal.  Let
\begin{center}
$Z=\{\delta<\omega_1\mid \exists \alpha>\delta(L_{\alpha}[A\cap \delta]\models ``Z_3\, + \,\delta=\omega_1" \wedge \forall \eta ((\delta\leq\eta<\alpha \wedge \eta$ is $A\cap \delta$-admissible)$\rightarrow \eta$ is an $L$-cardinal))\}.
\end{center}

Let $Q=\{Y\cap\omega_1\mid Y\prec L_{\alpha_{A}}[A]\wedge  |Y|=\omega\}$. We have shown that $Q\subseteq Z$ and hence $Z$ contains a club in $\omega_1$.  Define
\begin{equation}\label{def of S}
\text{$S=Z\cap \{\alpha<\omega_1: \alpha$ is an $L$-cardinal\}.}
\end{equation}
Then $S$ is stationary and in fact contains a club.

\subsection{Step Three}\label{big thm step four}

In this step, we shoot a club $C$ through $S$ via Baumgartner's forcing $P_{S}^{B}$ such that if $\eta$ is the limit point of $C$ and $L_{\alpha_{\eta}}[A\cap\eta, C\cap\eta]\models\eta=\omega_1$, then $L_{\alpha_{\eta}}[A\cap\eta, C\cap\eta]\models Z_3$ where $\alpha_{\eta}$ is  the least $\alpha>\eta$ such that $L_{\alpha}[A\cap\eta]\models Z_3\, + \, \eta=\omega_1$.\footnote{We failed to shoot such a club via variants of Harrington's forcing. The key point is that Theorem \ref{second pre thm} works for  $P_{S}^{B}$ but does not work for $P_{S}$.}

We still work in $L[B_0,A]$. For $f\in P_{S}^{B}$, define $(P_{S}^{B})_{f}=\{g\in P_{S}^{B}\mid g\leq f$ and  $\max(dom(g))=\max(dom(f))\}$.  For  $\eta<\omega_1$, define $P_{S}^{B}\upharpoonright\eta=\{f\in P_{S}^{B}\mid (dom(f)\cup ran(f))\subseteq \eta\}$.

\begin{lemma}\label{Bamber forcing key lemma}
Suppose $f \in P_{S}^{B}$. Then $f\Vdash_{P_{S}^{B}} \dot{G}\cap (P_{S}^{B})_{f}$ is $(P_{S}^{B})_{f}$-generic over $V$.
\end{lemma}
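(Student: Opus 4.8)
The plan is to show that for any fixed $f \in P_S^B$, a generic filter $\dot G$ on $P_S^B$ restricts to a $(P_S^B)_f$-generic filter below $f$. The key structural observation is a factorization: $P_S^B$ below $f$ splits, up to a dense subset, as $(P_S^B)_f$ times a tail forcing, and moreover the dense maximal antichains of $(P_S^B)_f$ that live in $V$ remain maximal antichains in the whole of $P_S^B$ below $f$. So the two things to check are (i) $\dot G \cap (P_S^B)_f$ is a filter on $(P_S^B)_f$, and (ii) it meets every dense subset $D \in V$ of $(P_S^B)_f$.

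First I would verify (i). Suppose $g_0, g_1 \in \dot G \cap (P_S^B)_f$. Since $\dot G$ is a filter on $P_S^B$, there is $h \in \dot G$ with $h \le g_0, g_1$; since $h \le g_0 \le f$, we have $h \le f$, but $h$ need not have $\max(\mathrm{dom}(h)) = \max(\mathrm{dom}(f))$, so $h$ may fail to lie in $(P_S^B)_f$. The remedy is to replace $h$ by its restriction $h \upharpoonright (\max(\mathrm{dom}(f))+1)$; using the third equivalent characterization of membership in $P_S^B$ from the excerpt (closed $C \subseteq S$ with $o.t.(C) = \max(\mathrm{dom}(f))+1$ realizing the values), this restriction is again a condition, it lies in $(P_S^B)_f$, it extends both $g_0$ and $g_1$, and — since it is implied by $h \in \dot G$, i.e.\ $h$ forces it into the restricted generic — it is in $\dot G$. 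Upward closure of $\dot G \cap (P_S^B)_f$ inside $(P_S^B)_f$ is immediate from that of $\dot G$. This shows (i).

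The heart of the argument is (ii), and this is where I expect the main obstacle. Let $D \in V$ be dense in $(P_S^B)_f$. I claim the set $D' = \{\, p \in P_S^B : p \le f \text{ and } p \upharpoonright (\max(\mathrm{dom}(f))+1) \in D \,\}$ is dense below $f$ in $P_S^B$; granting this, genericity of $\dot G$ gives some $p \in D' \cap \dot G$, and then $p \upharpoonright (\max(\mathrm{dom}(f))+1)$ lies in $D \cap \dot G \cap (P_S^B)_f$, finishing the proof. To prove the claim, take any $q \le f$ in $P_S^B$. The obstacle is that $q$ has already committed to continuous-increasing behaviour \emph{past} $\max(\mathrm{dom}(f))$, so one cannot freely modify $q$ on the initial segment $[0, \max(\mathrm{dom}(f))]$ without breaking the continuity/increasingness witnessed on all of $\mathrm{dom}(q)$ — and also $q$'s values above $\max(\mathrm{dom}(f))$ must stay above whatever new values we inject below. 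The way around this: first extend $q$ to some $q' \le q$ whose domain has a new maximum $\xi > \max(\mathrm{dom}(q))$ with $q'(\xi)$ chosen very large in $S$ (possible by the definition of $P_S^B$, picking a long continuous increasing $g: \xi+1 \to S$). Now $r_0 := q' \upharpoonright (\max(\mathrm{dom}(f))+1)$ is a condition in $(P_S^B)_f$, so by density of $D$ there is $r \le r_0$, $r \in D$, still with $\max(\mathrm{dom}(r)) = \max(\mathrm{dom}(f))$; since $r \le r_0$ only adds new ordinals strictly between existing ones and all of them are $< \sup(\mathrm{ran}(r_0)) \le q'(\xi)$, the amalgamation $p := r \cup (q' \upharpoonright (\mathrm{dom}(q') \setminus (\max(\mathrm{dom}(f))+1)))$ is order-coherent, and is witnessed to lie in $P_S^B$ by splicing the continuous increasing functions for $r$ and for $q'$ along the huge gap below $q'(\xi)$. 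Then $p \le q' \le q$, $p \le f$, and $p \upharpoonright (\max(\mathrm{dom}(f))+1) = r \in D$, so $p \in D'$. This establishes the density claim and hence the lemma; the careful bookkeeping of the splice — checking that the combined function is genuinely continuous and increasing at the boundary ordinals and that no value collisions occur — is the one routine-but-delicate computation I would actually write out in full.
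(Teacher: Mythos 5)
Your proposal is correct and follows essentially the same route as the paper: reduce to showing that every $h\le f$ together with every dense $D\subseteq (P_{S}^{B})_{f}$ in $V$ admits some $p\in D$ with $p\le h\upharpoonright(\max(\mathrm{dom}(f))+1)$ such that $h\cup p$ is again a condition, the amalgamation being witnessed by splicing the closed set for $p$ onto the part of the closed set for $h$ lying above the common value $h(\beta)=p(\beta)$ where $\beta=\max(\mathrm{dom}(f))$. The one superfluous move is your preliminary extension of $q$ to $q'$ with a huge top value: since any $r\in (P_{S}^{B})_{f}$ below $q\upharpoonright(\beta+1)$ is increasing with $\max(\mathrm{dom}(r))=\beta$, all of its values are already bounded by $r(\beta)=q(\beta)$ and hence lie below every value $q$ takes at positions above $\beta$, so the splice can be performed at $q(\beta)$ directly, exactly as in the paper's proof.
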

\begin{proof}
Suppose $h\in P_{S}^{B}, h\leq f$ and $D$ is a dense subset of $(P_{S}^{B})_{f}$. It suffices to show that there is $p \in D$ such that $h\cup p\in P_{S}^{B}$. Let $\max(dom(f))=\beta$. Then $h\upharpoonright (\beta+1)\in (P_{S}^{B})_{f}$. Take $p\in D$  such that  $p\leq h\upharpoonright (\beta+1)$. We show that $h\cup p\in P_{S}^{B}$.

Let $\alpha=\max(dom(h))$. Since $h\in P_{S}^{B}$, there exists $E \subseteq S$ such that $E$ is closed, $o.t.(E)=\alpha+1$  and for any $\gamma\in dom(h), h(\gamma)$ is the $\gamma$-th element of $E$. Since $p\in (P_{S}^{B})_{f}, \max(dom(p))=\beta$. Let $F\subseteq S$ be closed such that  $o.t.(F)=\beta+1$ and for any $\gamma\in dom(p), p(\gamma)$ is the $\gamma$-th element of $F$.
Note that $h(\beta)=f(\beta)=p(\beta)$. Let $C=\{\gamma\in E\mid \gamma\geq h(\beta)=p(\beta)\}\cup F$. $C\subseteq S$ is closed. Since $p\leq h\upharpoonright (\beta+1), o.t.(C)=\alpha+1$.  For any $\gamma\in dom(h\cup p), (h\cup p)(\gamma)$ is the $\gamma$-th element of $C$. So $h\cup p\in P_{S}^{B}$.
\end{proof}

\begin{lemma}\label{lemma for tree representation}
Suppose $f\in P_{S}^{B}$ where $f=\{(\eta, \eta)\}$. Then
\[(P_{S}^{B})_f=\{g\cup \{(\eta, \eta)\} \mid g\in P_{S}^{B}\upharpoonright\eta\}.\]
\end{lemma}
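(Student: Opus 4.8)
My plan is to prove the two inclusions separately, in both cases using the combinatorial description of conditions from the remark after the definition of $P_{S}^{B}$: $q\in P_{S}^{B}$ iff $dom(q)$ is finite and there is a closed $C\subseteq S$ with $o.t.(C)=\max(dom(q))+1$ whose $\beta$th element equals $q(\beta)$ for every $\beta\in dom(q)$. Two facts will be used throughout: every condition is the restriction of a continuous increasing function, hence itself increasing; and $\eta\in S$ (since $f(\eta)=\eta\in ran(f)\subseteq S$), so $\eta$ is an infinite $L$-cardinal, hence a limit ordinal that is additively indecomposable, i.e.\ $\alpha+\eta=\eta$ whenever $\alpha<\eta$. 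This last fact will be the crux of the order-type bookkeeping below.

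For the inclusion $\subseteq$: given $h\in(P_{S}^{B})_{f}$, we have $f\subseteq h$, so $\eta\in dom(h)$, $h(\eta)=\eta$, and $\max(dom(h))=\eta$. Put $g:=h\upharpoonright\eta=h\setminus\{(\eta,\eta)\}$. Since $h$ is increasing, $g(\beta)<h(\eta)=\eta$ for each $\beta\in dom(g)$, so $dom(g)\cup ran(g)\subseteq\eta$; and if $C$ is a closed witness for $h$, then the set of its first $\max(dom(g))+1$ elements is closed, contained in $S$, of the right order type, and has $g(\beta)$ as its $\beta$th element for $\beta\in dom(g)$, so it witnesses $g\in P_{S}^{B}$. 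Hence $g\in P_{S}^{B}\upharpoonright\eta$ and $h=g\cup\{(\eta,\eta)\}$, as required.

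For the inclusion $\supseteq$: given $g\in P_{S}^{B}\upharpoonright\eta$, set $h:=g\cup\{(\eta,\eta)\}$. Then $h\supseteq f$, so $h\le f$, and $\max(dom(h))=\eta=\max(dom(f))$ since $dom(g)\subseteq\eta$; so it remains only to verify $h\in P_{S}^{B}$. If $g=\emptyset$ then $h=f\in P_{S}^{B}$; otherwise let $\beta_0:=\max(dom(g))<\eta$ and $\mu:=g(\beta_0)<\eta$, fix a closed witness $C_g\subseteq S$ for $g$ (so $o.t.(C_g)=\beta_0+1$ and $\max C_g=\mu$) and a closed witness $C_f\subseteq S$ for $f$ (so $\max C_f=\eta$ and $C_f^{-}:=C_f\setminus\{\eta\}$ is a closed subset of $S$, cofinal in $\eta$, of order type $\eta$), and define $D:=C_g\cup\{x\in C_f^{-}:x>\mu\}\cup\{\eta\}$. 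The claim, which gives $h\in P_{S}^{B}$, is that $D$ is a closed witness for $h$.

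The verification splits into three ingredients. Closedness of $D$: $C_g$ is closed with maximum $\mu$; $\{x\in C_f^{-}:x>\mu\}$ is a final segment of the closed set $C_f^{-}$, hence closed and still cofinal in $\eta$; a limit point of $D$ that is $\le\mu$ lies in $C_g$, one that is strictly above $\mu$ lies in the middle piece, and $\eta$, the supremum of the middle piece, is explicitly a member of $D$. Matching of values: every element of $C_g$ is $\le\mu$ while every other element of $D$ exceeds $\mu$, so $C_g$ is the initial segment of $D$ of order type $\beta_0+1$; hence the $\beta$th element of $D$ equals that of $C_g$, namely $g(\beta)$, for $\beta\in dom(g)$, and the $\eta$th element of $D$ is $\max D=\eta=h(\eta)$. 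Order type: $D$ is the increasing concatenation of $C_g$ (length $\beta_0+1$), of $\{x\in C_f^{-}:x>\mu\}$ (of length $\sigma$, where $\rho+\sigma=\eta$ and $\rho$ is the length of the complementary initial segment $\{x\in C_f^{-}:x\le\mu\}$; here $\rho\le\mu+1<\eta$ because the enumeration of $C_f^{-}$ is continuous and $\eta$ is a limit ordinal), and of $\{\eta\}$. By additive indecomposability of $\eta$, both $\sigma=\eta$ and $(\beta_0+1)+\eta=\eta$, so $o.t.(D)=(\beta_0+1)+\eta+1=\eta+1=\max(dom(h))+1$. This third ingredient, the order-type computation, is the only delicate point, and it is precisely where $\eta\in S$ and the fact that $S$ consists of $L$-cardinals (so $\eta$ is additively indecomposable) are needed; everything else is routine.
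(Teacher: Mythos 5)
Your proof is correct and follows essentially the same route as the paper: the nontrivial inclusion is handled by splicing the witness for $g$ onto the tail of the witness for $f$ lying above $\max(ran(g))$, with the additive indecomposability of $\eta$ (as an infinite $L$-cardinal in $S$) doing the order-type bookkeeping. The only cosmetic difference is that you phrase the witness as a closed set $D$ while the paper phrases it as a continuous increasing function $H$; these formulations are interchangeable by the remark following the definition of $P_{S}^{B}$.
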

\begin{proof}
$\subseteq$ is trivial. Fix $g\in P_{S}^{B}\upharpoonright\eta$. We show that $g\cup \{(\eta, \eta)\}\in P_{S}^{B}$. It suffices to show that there exists $H: \eta+1\rightarrow S\cap (\eta+1)$ such that
\begin{equation}\label{condtion of function}
\text{$H$ is increasing and continuous, $H$ extends $g$ and $H(\eta)=\eta$.}
\end{equation}
Let $\xi=\max(dom(g))$. Let $F: \xi+1\rightarrow S\cap (g(\xi)+1)$ be the witness function for $g\in P_{S}^{B}$(i.e. $F$ is increasing, continuous and extends $g$). Let $E: \eta+1\rightarrow S\cap (\eta+1)$ be the witness function for $f\in P_{S}^{B}$(i.e. $E$ is increasing, continuous and $E(\eta)=\eta$). Let $C=ran(E)\setminus (g(\xi)+1)$. Since $\eta\in S, \eta$ is indecomposable\footnote{A limit ordinal $\gamma$ is indecomposable if there is no $\alpha<\gamma$ and $\beta<\gamma$ such that $\alpha+\beta=\gamma$. Note that if $\gamma$ is indecomposable, then for any $\alpha<\gamma, o.t.(\{\beta\mid \alpha\leq\beta<\gamma\})=\gamma$.} and hence $o.t.(C)=o.t.((\eta+1)\setminus (g(\xi)+1))=\eta+1$ since $g(\xi)<\eta$. Let $\pi: \eta+1\rightarrow C$ be an increasing continuous enumeration of $C$. Define $H: \eta+1\rightarrow S\cap (\eta+1)$ by $H\upharpoonright \xi+1=F$ and for any $\alpha\leq\eta, H(\xi+1+\alpha)=\pi(\alpha)$. It is easy to check that $H$ satisfies (\ref{condtion of function}).
\end{proof}

\begin{notation}
For  $\eta\in S$, let $\alpha_{\eta}$ be the least $\alpha>\eta$ such that $L_{\alpha}[A\cap\eta]\models Z_3\, + \, \eta=\omega_1$.
\end{notation}

\begin{lemma}\label{Basic properties of S}
\begin{enumerate}[(a)]
      \item Suppose $\eta\in S$ and $\beta<\eta$. Then $L_{\eta}[A]\models \beta$ is countable.
  \item Suppose $\eta_0, \eta_1\in S$ and $\eta_0<\eta_1$. Then $\alpha_{\eta_0}<\eta_1$. i.e. For any $\eta\in S, \alpha_{\eta}<\bar{\eta}$ where $\bar{\eta}=\min(S\setminus(\eta+1))$.
\end{enumerate}
\end{lemma}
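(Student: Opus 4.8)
The plan is to prove both parts by exploiting the way $S$ was defined (via the set $Z$, whose elements $\delta$ come with a witness ordinal $\alpha>\delta$ such that $L_\alpha[A\cap\delta]\models Z_3 + \delta=\omega_1$), together with the fact that $S$ consists of $L$-cardinals. For part (a), fix $\eta\in S$ and $\beta<\eta$. Since $\eta\in S\subseteq Z$, there is $\alpha>\eta$ with $L_\alpha[A\cap\eta]\models Z_3 + \eta=\omega_1$; in particular $\alpha_\eta$ is well-defined and $\eta\le\alpha_\eta\le\alpha$. The point is that $\eta$ is a cardinal (indeed $=\omega_1$) in $L_{\alpha_\eta}[A\cap\eta]$, so every $\beta<\eta$ is countable there; and since $A\cap\eta\in L_\eta[A]$ (as $\eta$ is an ordinal and $A\cap\eta$ is a bounded initial segment), everything relevant is computed inside $L_\eta[A]$. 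First I would argue that $\eta$ being an $L$-cardinal forces $\eta$ to be indecomposable (a limit cardinal in $L$ above $\omega$ is closed under ordinal addition), which is already used in Lemma \ref{lemma for tree representation}; then a countable surjection from $\omega$ onto $\beta$ witnessing $|\beta|=\omega$ is constructible from $A\cap\eta$ at some stage below $\eta$, because $L_\eta[A]$ sees an initial segment long enough. More carefully: $\beta<\eta$, and since $\eta\in Z$ with witness $\alpha$, inside $L_\alpha[A\cap\eta]$ we have $|\beta|\le\aleph_0$; but the relevant surjection appears at some level $L_\xi[A\cap\eta]$ with $\xi<\eta$ by admissibility/condensation considerations (here one uses that $\eta$ is a cardinal in $L$, so $L_\eta$ already correctly computes cardinalities below $\eta$), hence it lies in $L_\eta[A]$, giving $L_\eta[A]\models\beta$ is countable.

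For part (b), suppose $\eta_0,\eta_1\in S$ with $\eta_0<\eta_1$; I want $\alpha_{\eta_0}<\eta_1$. The key is that $\alpha_{\eta_0}$ is \emph{definable} from $A\cap\eta_0$ (it is the least level of $L[A\cap\eta_0]$ satisfying the $\Sigma$-sentence "$Z_3 + \eta_0=\omega_1$"), and $A\cap\eta_0$ is a bounded subset of $\eta_1$, hence an element of $L_{\eta_1}[A]$ at a stage well below $\eta_1$. So $\alpha_{\eta_0}$ is computable inside $L_{\eta_1}[A\cap\eta_1]$ — indeed inside $L_{\alpha_{\eta_1}}[A\cap\eta_1]$, a model of $Z_3 + \eta_1=\omega_1$. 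In that model $\eta_1=\omega_1$, so every ordinal constructed from the countable set $A\cap\eta_0$ that the model can "see as small" must be $<\eta_1$; concretely, $L_{\alpha_{\eta_1}}[A\cap\eta_1]$ knows $A\cap\eta_0$ is countable (by part (a) applied with $\eta=\eta_1$, $\beta=\eta_0$, and absorbing $A\cap\eta_0$), and $Z_3$ suffices to carry out the construction of $L_{\alpha_{\eta_0}}[A\cap\eta_0]$ up to the point of verifying "$Z_3 + \eta_0=\omega_1$ holds", since that construction only involves countably many steps from the model's viewpoint and $Z_3$ proves $\mathcal{P}(\omega)$ exists together with enough of the $L$-hierarchy over a real. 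Hence $\alpha_{\eta_0}$, being the ordinal height of a structure the model recognizes as countable, satisfies $\alpha_{\eta_0}<\omega_1^{L_{\alpha_{\eta_1}}[A\cap\eta_1]}=\eta_1$. The reformulation "$\alpha_\eta<\bar\eta$ where $\bar\eta=\min(S\setminus(\eta+1))$" is then immediate since $\bar\eta$ is just the next element of $S$ above $\eta$.

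The main obstacle I expect is the bookkeeping in part (b) about exactly which fragment of the $L[A\cap\eta_0]$-hierarchy is needed to certify "$L_{\alpha_{\eta_0}}[A\cap\eta_0]\models Z_3 + \eta_0=\omega_1$" and verifying that this certification can be completed inside a model of $Z_3$ (rather than needing full $ZFC$): one must check that $Z_3$, applied in $L_{\alpha_{\eta_1}}[A\cap\eta_1]$ to the real coding $A\cap\eta_0$ (legitimate since $A\cap\eta_0$ is countable there by part (a)), proves the existence of the transitive model $L_{\alpha_{\eta_0}}[A\cap\eta_0]$ as a set, and that its satisfaction relation is correctly computed — this is where one invokes that $Z_3$ proves $\mathcal{P}(\omega)$ exists and is closed under the relevant constructibility operations, so that "build $L_\xi[a]$ for a real $a$ until you hit a model of $Z_3$" is an internally meaningful and converging process. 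Granting this, both parts follow; part (a) is genuinely routine given that $\eta$ is an $L$-cardinal, and part (b) reduces to the definability-plus-absoluteness argument sketched above.
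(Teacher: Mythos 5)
Your part (a) is essentially the paper's argument and is fine in outline: since $L_{\alpha_{\eta}}[A\cap\eta]\models Z_3\,+\,\eta=\omega_1$, condensation \emph{inside that model} gives $\mathbb{R}\cap L_{\alpha_{\eta}}[A\cap\eta]=\mathbb{R}\cap L_{\eta}[A]$, so the real coding a surjection $\omega\rightarrow\beta$ already lies in $L_{\eta}[A]$. (Note, though, that "$\eta$ is an $L$-cardinal" and indecomposability are not what drives this; what matters is only that $\eta$ is the $\omega_1$ of the $Z_3$-model $L_{\alpha_{\eta}}[A\cap\eta]$.)

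Part (b) has a genuine gap, which you flag as "the main obstacle" but do not close. Your argument needs $L_{\alpha_{\eta_1}}[A\cap\eta_1]$ to contain $L_{\alpha_{\eta_0}}[A\cap\eta_0]$ as a set \emph{and} to recognize $\alpha_{\eta_0}$ as countable; but "the construction only involves countably many steps from the model's viewpoint" is exactly the assertion $\alpha_{\eta_0}<\omega_1^{L_{\alpha_{\eta_1}}[A\cap\eta_1]}=\eta_1$ that you are trying to prove, so the argument is circular. The fallback justification --- that $Z_3$ proves the process "build $L_{\xi}[a]$ until you hit a model of $Z_3$" converges --- is false: by G\"{o}del's second incompleteness theorem $Z_3$ cannot prove that for every real $a$ some $L_{\xi}[a]\models Z_3$. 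And even granting $\alpha_{\eta_0}<\alpha_{\eta_1}$, the axiom "every set has cardinality $\leq\beth_1$" only yields $|\alpha_{\eta_0}|\leq\eta_1$ inside $L_{\alpha_{\eta_1}}[A\cap\eta_1]$, not $<\eta_1$. The paper closes this by a two-step contradiction: assuming $\eta_1\leq\alpha_{\eta_0}$, it first shows $L_{\eta_1}[A\cap\eta_0]\models Z_3\,+\,\eta_0=\omega_1$ (using $Z_3\vdash\forall E\subseteq\omega_1(L_{\omega_1}[E]\models ZFC^{-})$ with $\omega_1=\eta_1$, together with $L_{\eta_1}[A\cap\eta_0]\subseteq L_{\alpha_{\eta_0}}[A\cap\eta_0]$), forcing $\eta_1=\alpha_{\eta_0}$ by minimality; it then takes a Skolem hull $X\prec L_{\eta_1}[A\cap\eta_0]$ with $\eta_0+1\cup\{A\cap\eta_0\}\subseteq X$, countable in $L_{\alpha_{\eta_1}}[A\cap\eta_1]$ (Fact \ref{apply above fact}), whose transitive collapse $L_{\bar{\eta_1}}[A\cap\eta_0]\models Z_3\,+\,\eta_0=\omega_1$ with $\eta_0<\bar{\eta_1}<\eta_1$ contradicts the minimality of $\alpha_{\eta_0}$. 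Some such reflection-and-condensation step is unavoidable, and your proposal does not supply it.
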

\begin{proof}
(a) Since $\eta \in S, L_{\alpha_{\eta}}[A\cap\eta]\models \eta=\omega_1$. Note that $\mathbb{R}\cap L_{\alpha_{\eta}}[A\cap\eta]=\mathbb{R}\cap L_{\eta}[A\cap\eta]=\mathbb{R}\cap L_{\eta}[A]$. Since $\beta<\eta, L_{\eta}[A]=L_{\eta}[A\cap\eta]\models \beta$ is countable.

(b) Suppose $\eta_1\leq\alpha_{\eta_0}$. Note that $Z_3\vdash \forall E\subseteq \omega_1(L_{\omega_1}[E]\models ZFC^{-})$. Since $L_{\alpha_{\eta_1}}[A\cap\eta_1]\models Z_3\, + \,\eta_1= \omega_1, L_{\eta_1}[A\cap\eta_0]\models ZFC^{-}$. Since $\eta_1\leq\alpha_{\eta_0}$ and $L_{\alpha_{\eta_0}}[A\cap\eta_0]\models\eta_0= \omega_1, L_{\eta_1}[A\cap\eta_0]\subseteq L_{\alpha_{\eta_0}}[A\cap\eta_0]$ and hence  $L_{\eta_1}[A\cap\eta_0]\models  \eta_0=\omega_1$. Since $\eta_1\in S$, $L_{\eta_1}[A\cap\eta_0]\models ZFC^{-}$, $L_{\eta_1}[A\cap\eta_0]\subseteq L_{\alpha_{\eta_0}}[A\cap\eta_0]\models Z_3$ and $L_{\eta_1}[A\cap\eta_0]\models  \eta_0=\omega_1$, we have $L_{\eta_1}[A\cap\eta_0]\models Z_3$. i.e.
\begin{equation}\label{and hence eta}
L_{\eta_1}[A\cap\eta_0]\models Z_3\, + \,\eta_0=\omega_1.
\end{equation}
So $\eta_1\geq \alpha_{\eta_0}$ and hence $\eta_1=\alpha_{\eta_0}$.

\begin{fact}\label{apply above fact}
(Folklore)\quad ($Z_3$) \quad $\forall E\subseteq \omega_1\, \forall\alpha<\omega_1\, \forall a\in L_{\omega_1}[E]\, \exists X(X\prec L_{\omega_1}[E]\wedge |X|=\omega\wedge \alpha\cup\{a\}\subseteq X)$.\footnote{This fact is standard and its proof uses the standard Skolem Hull argument. We only need to check that the proof can be run in $Z_3$. It is not hard to check this.}
\end{fact}
Since $L_{\alpha_{\eta_1}}[A\cap\eta_1]\models Z_3\, + \,\eta_1=\omega_1$, by Fact \ref{apply above fact}, there is  $X\in L_{\alpha_{\eta_1}}[A\cap\eta_1]$ such that $X\prec L_{\eta_1}[A\cap\eta_0],  L_{\alpha_{\eta_1}}[A\cap\eta_1]\models |X|=\omega, A\cap\eta_0\in X$ and $\eta_0+1\subseteq X$(in Fact \ref{apply above fact} let $E=A\cap\eta_0, \alpha=\eta_0+1$ and $a=A\cap\eta_0$). Let $M$ be the transitive collapse of $X$ and $M=L_{\bar{\eta_1}}[A\cap\eta_0]$. Note that $\eta_0<\bar{\eta_1}<\eta_1$. By (\ref{and hence eta}), $L_{\bar{\eta_1}}[A\cap\eta_0]\models Z_3\, + \,\eta_0=\omega_1$ and hence $\alpha_{\eta_0}\leq \bar{\eta_1}<\eta_1$. Contradiction.
\end{proof}

\begin{theorem}\label{second pre thm}
Suppose $\{(\eta,\eta)\}\in P_{S}^{B}$. Then
$(P_{S\cap\eta}^{B})^{L_{\alpha_{\eta}}[A\cap \eta]}=P_{S}^{B}\upharpoonright\eta$.
\end{theorem}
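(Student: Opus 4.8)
The plan is to reduce the claim to two facts: (1) $P_S^B\upharpoonright\eta$, computed in $V=L[B_0,A]$, coincides with the Baumgartner forcing $P_{S\cap\eta}^B$ for the set $S\cap\eta$ over $\eta$; and (2) $L_{\alpha_\eta}[A\cap\eta]$ computes both $S\cap\eta$ and, from it, the forcing $P_{S\cap\eta}^B$ correctly. First I would observe that the hypothesis $\{(\eta,\eta)\}\in P_S^B$ implies $\eta\in S$: a witnessing continuous increasing $g$ with $g(\eta)=\eta$ has $\eta\in\operatorname{ran}(g)\subseteq S$; moreover, as $\eta$ is a limit ordinal, $\operatorname{ran}(g\upharpoonright\eta)$ is a club in $\eta$ contained in $S$. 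In particular $\alpha_\eta$ is defined, and Lemma \ref{Basic properties of S}(b) applies to every $\delta\in S\cap\eta$, giving $\alpha_\delta<\eta$.

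Fact (1) is easy: if $f\in P_S^B\upharpoonright\eta$, then by remark (2) after the definition of $P_S^B$ there is a continuous increasing $g\colon\max(\operatorname{dom} f)+1\to S$ extending $f$, and since $g$ is increasing with $g(\max(\operatorname{dom} f))=f(\max(\operatorname{dom} f))<\eta$, automatically $\operatorname{ran}(g)\subseteq\eta$; hence $f\in P_{S\cap\eta}^B$. The reverse inclusion and the agreement of orderings are immediate, so $P_S^B\upharpoonright\eta=P_{S\cap\eta}^B$ in $V$.

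The heart of fact (2) is that $S\cap\eta$ is computed correctly in $L_{\alpha_\eta}[A\cap\eta]$, i.e.\ that the defining formula of $S$, relativized to that model (reading $A$ as $A\cap\eta$ and $\omega_1$ as $\eta$), yields exactly $S\cap\eta$. For $\delta<\eta$: if $\delta\in S$, then $\alpha_\delta<\eta<\alpha_\eta$ by Lemma \ref{Basic properties of S}(b), so $L_{\alpha_\eta}[A\cap\eta]$ already contains the witness $\alpha_\delta$ to ``$\delta\in Z$''; and since ``being an $L$-cardinal'' for ordinals $<\eta$ (using $L_{\alpha_\eta}\subseteq L_{\alpha_\eta}[A\cap\eta]$ together with $\zeta^{+L}\le\eta$ for $\zeta<\eta$, as $\eta$ is an $L$-cardinal), ``being $A\cap\delta$-admissible'', and ``$L_\beta[A\cap\delta]\models Z_3+\delta=\omega_1$'' are all absolute between $V$ and $L_{\alpha_\eta}[A\cap\eta]$ for the relevant small ordinals, the model also verifies $\delta\in Z$ and that $\delta$ is an $L$-cardinal, so $\delta\in S$ in the model. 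Conversely, if $\delta\notin S$, then either $\delta$ is not an $L$-cardinal (absolute) or $\delta\notin Z$; in the latter case any $\alpha<\alpha_\eta$ that the model might take as a witness to ``$\delta\in Z$'' would, by the same absoluteness — now also using that $\alpha_\eta$ is closed under $\zeta\mapsto\zeta^{+L}$ since $L_{\alpha_\eta}[A\cap\eta]\models Z_3\supseteq ZFC^-$ — be a genuine witness in $V$, a contradiction, so the model sees $\delta\notin Z$. Hence $(S\cap\eta)^{L_{\alpha_\eta}[A\cap\eta]}=S\cap\eta$, and in particular $S\cap\eta\in L_{\alpha_\eta}[A\cap\eta]$. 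Finally, since $V$ and $L_{\alpha_\eta}[A\cap\eta]$ share the set $S\cap\eta$ and the ordinal $\eta$, and membership in $P_{S\cap\eta}^B$ reduces (remark (2)) to the existence of a \emph{bounded} (in $\eta$) continuous increasing map extending the given condition, which the model — being a transitive model of $ZFC^-$ containing $S\cap\eta$ — computes absolutely, one gets $(P_{S\cap\eta}^B)^{L_{\alpha_\eta}[A\cap\eta]}=P_{S\cap\eta}^B=P_S^B\upharpoonright\eta$.

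The main obstacle I anticipate is the absoluteness bookkeeping in the third paragraph: pinning down exactly which instances of ``$L$-cardinal'', ``admissible'', and ``satisfies $Z_3$'' are absolute to $L_{\alpha_\eta}[A\cap\eta]$, and checking that this model — being a transitive model of $Z_3$ — is tall enough ($L_{\alpha_\eta}\subseteq L_{\alpha_\eta}[A\cap\eta]$ and closure under $\zeta\mapsto\zeta^{+L}$) for all of them, with Lemma \ref{Basic properties of S}(b) doing the essential work of keeping the relevant witness $\alpha_\delta$ strictly below $\eta$.
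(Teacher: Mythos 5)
The decomposition into (1) $P_S^B\upharpoonright\eta=P_{S\cap\eta}^B$ in $V$ and (2) correctness of the computation inside $L_{\alpha_\eta}[A\cap\eta]$ is reasonable, and your treatment of (1) and of the identification of $S\cap\eta$ inside the model (via Lemma \ref{Basic properties of S}(b) and absoluteness of ``$L$-cardinal'', ``admissible'', ``$\models Z_3$'') is essentially sound. But the final step of (2) --- that the model, ``being a transitive model of $ZFC^-$ containing $S\cap\eta$, computes absolutely'' whether a condition $g$ admits a continuous increasing extension into $S\cap\eta$ --- is precisely the nontrivial content of the theorem, and as stated it is not justified. Membership in $P_{S\cap\eta}^B$ is an existential statement over functions $\max(dom(g))+1\to\eta$, i.e.\ essentially over $\mathcal{P}(\eta)$, and such statements are only \emph{upward} absolute: $L_{\alpha_\eta}[A\cap\eta]$ is a very thin submodel of $V=L[B_0,A]$ (indeed $\eta=\omega_1$ there while $\eta$ is countable in $V$), so a witness $H$ existing in $V$ need not lie in the model, and ``transitive model of $ZFC^-$ containing the parameter'' gives no downward transfer. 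This is exactly the inclusion $P_S^B\upharpoonright\eta\subseteq(P_{S\cap\eta}^B)^{L_{\alpha_\eta}[A\cap\eta]}$ that the theorem is really about; the other inclusion is the one the paper dismisses as trivial.

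The missing idea is the reduction of ``$g$ has a continuous increasing extension into $S\cap(H(\xi)+1)$'' to a statement that genuinely is absolute. The paper does this by fixing surjections $e_0:\omega\to\xi+1$ and $e_1:\omega\to H(\xi)+1$ \emph{inside} $L_{\alpha_\eta}[A\cap\eta]$ (possible since $\eta=\omega_1$ there and $\xi,H(\xi)<\eta$) and building a tree $T$ of finite approximations $(\pi_1,\pi_2)$ that lies in the model; the delicate point is that finite conditions cannot directly enforce continuity at limits, which is why the auxiliary component $\pi_2$ and condition (\ref{fourth con on tree}) are needed to witness continuity locally. The $V$-witness $H$ then yields an infinite branch of $T$, and absoluteness of ill-foundedness hands a branch --- hence a witness $H^\infty$ --- back to $L_{\alpha_\eta}[A\cap\eta]$. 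Your instinct that some $\Sigma^1_1$-style absoluteness should close the argument is correct, but the proposal never carries out the coding, and the continuity clause is exactly where the work lies; without it the proof of Theorem \ref{second pre thm} is incomplete.
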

\begin{proof}
$\subseteq$ is trivial. Suppose $g\in P_{S}^{B}\upharpoonright\eta$. We show that $g\in   (P_{S\cap\eta}^{B})^{L_{\alpha_{\eta}}[A\cap \eta]}$. Let $\xi=\max(dom(g))$. Let $H: \xi+1\rightarrow S\cap\eta$ be the witness function for $g\in P_{S}^{B}$(i.e. $H$ is increasing, continuous and extends $g$). It suffices to find a function $H^{\infty}: \xi+1\rightarrow S\cap\eta$ such that
\begin{equation}\label{property of fun we want}
\text{$H^{\infty}$ is increasing, continuous, $H^{\infty}$ extends $g$ and $H^{\infty}\in L_{\alpha_{\eta}}[A\cap \eta]$.}
\end{equation}

Pick a surjection $e_0: \omega\rightarrow \xi+1$ such that $e_0\in L_{\alpha_{\eta}}[A\cap \eta]$ and
\begin{equation}\label{infinite condiition}
\text{for any $\alpha\leq\xi, \{i\in\omega\mid  e_0(i)=\alpha\}$ is infinite.}
\end{equation}
Pick a surjection $e_1: \omega\rightarrow H(\xi)+1$ such that $e_1\in L_{\alpha_{\eta}}[A\cap \eta]$. Let $T$ be the set of all pairs $(\pi_1, \pi_2)$ such that  $\pi_1: k\rightarrow (H(\xi)+1)\cap S$ where $k\in\omega$, $\pi_2: k\rightarrow\omega$ and the following hold:\footnote{The tree $T$ is defined for definability argument. We define $T$ to show that $H^{\infty}\in L_{\alpha_{\eta}}[A\cap \eta]$: we first show that $T\in L_{\alpha_{\eta}}[A\cap \eta]$ and then show that $H^{\infty}\in L_{\alpha_{\eta}}[A\cap \eta]$ via Claim \ref{there is a such tree}.}

\begin{equation}\label{first con on tree}
\text{For all $i<k$, if $e_0(i)\in dom(g)$, then $\pi_1(i)=g(e_0(i))$; }
\end{equation}
\begin{equation}\label{second con on tree}
\text{$\forall\, i<j<k(\pi_1(i)=\pi_1(j)\Leftrightarrow e_0(i)=e_0(j))$;}
\end{equation}
\begin{equation}\label{increasing con}
\text{$\forall\, i<j<k(\pi_1(i)<\pi_1(j)\Leftrightarrow e_0(i)<e_0(j))$;}
\end{equation}
For all $i<k$, if $e_0(i)>0$ is a limit ordinal and $\pi_2(i)<k$, then
\begin{equation}\label{fourth con on tree}
\text{$\sup(\{e_1(m)\mid m\leq i\wedge  e_1(m)<\pi_1(i)\})<\pi_1(\pi_2(i))<\pi_1(i)$ and $e_0(\pi_2(i))<e_0(i)$.}
\end{equation}

By (\ref{def of S}) and  Lemma \ref{Basic properties of S}(b), $S\cap (H(\xi)+1)\in L_{\alpha_{\eta}}[A\cap \eta]$.  Since $g\in P_{S}^{B}\upharpoonright\eta, g\in L_{\alpha_{\eta}}[A\cap \eta]$. Since $S\cap (H(\xi)+1), g, e_0, e_1\in L_{\alpha_{\eta}}[A\cap \eta]$, by the definition of $T$, $T\in L_{\alpha_{\eta}}[A\cap \eta]$.

Define $\pi^{\infty}_1:\omega\rightarrow (H(\xi)+1)\cap S$  as follows: $\pi^{\infty}_1(i)=H(e_0(i))$ for  $i\in\omega$. Now we define $\pi^{\infty}_2: \omega\rightarrow\omega$ as follows such that for all $i<\omega$, if $e_0(i)>0$ is a limit ordinal, then
\begin{equation}\label{key if limit}
\text{$\sup(\{e_1(m)\mid m\leq i\wedge  e_1(m)<\pi^{\infty}_1(i)  \})<\pi^{\infty}_1(\pi^{\infty}_2(i))<\pi^{\infty}_1(i)$
          and $e_0(\pi^{\infty}_2(i))<e_0(i)$.}
\end{equation}

Suppose $e_0(i)>0$ and $e_0(i)$ is a limit ordinal. Let $\alpha=e_0(i)$. Since $H$ is continuous, $H(\alpha)$ is a limit ordinal. Let $\beta<\alpha$ be the least ordinal such that $\sup(\{e_1(m)\mid m\leq i\wedge  e_1(m)<\pi^{\infty}_1(i)\})<H(\beta)<H(\alpha)$. Let $\pi^{\infty}_2(i)$ be the least $j\in\omega$ such that $e_0(j)=\beta$. If $e_0(i)=0$ or $e_0(i)$ is not a limit ordinal, let $\pi^{\infty}_2(i)=0$. Since $\pi^{\infty}_1(\pi^{\infty}_2(i))=\pi^{\infty}_1(j)=H(e_0(j))=H(\beta), \pi^{\infty}_1(i)=H(\alpha)$ and $e_0(\pi^{\infty}_2(i))=\beta<\alpha=e_0(i)$, (\ref{key if limit}) holds.\footnote{To show (\ref{key if limit}), we use that $H$ is continuous.}

\begin{claim}\label{there is a such tree}
For any $k\in\omega, (\pi^{\infty}_1\upharpoonright k, \pi^{\infty}_2\upharpoonright k)\in T$.
\end{claim}
\begin{proof}
Fix $k\in\omega$. We show that $(\pi^{\infty}_1\upharpoonright k, \pi^{\infty}_2\upharpoonright k)$ satisfies conditions (\ref{first con on tree})-(\ref{fourth con on tree})  in the definition of $T$. Since $H$ extends $g$,  (\ref{first con on tree}) holds. Since $H$ is strictly increasing, (\ref{second con on tree}) and (\ref{increasing con}) hold.  By (\ref{key if limit}), (\ref{fourth con on tree}) holds.
\end{proof}

Define $H^{\infty}: \xi+1\rightarrow S\cap\eta$ by
\begin{equation}\label{def of fun we want}
\text{$H^{\infty}(e_0(i))=\pi^{\infty}_1(i)$ for  $i\in\omega$.}
\end{equation}
We show that $H^{\infty}$ satisfies (\ref{property of fun we want}). By (\ref{second con on tree}), $H^{\infty}$ is well defined. By (\ref{increasing con}), $H^{\infty}$ is increasing. By (\ref{first con on tree}), $H^{\infty}$ extends $g$. Since $T, e_0\in L_{\alpha_{\eta}}[A\cap \eta]$,  by (\ref{def of fun we want}) and Claim \ref{there is a such tree},  $H^{\infty}\in L_{\alpha_{\eta}}[A\cap \eta]$.

\begin{claim}
$H^{\infty}$ is continuous.
\end{claim}
\begin{proof}
Suppose $0<\alpha\leq\xi$ is a limit ordinal. We show that $H^{\infty}(\alpha)=\sup(\{H^{\infty}(\beta)\mid\beta<\alpha\})$. Suppose not. Then there exists $\theta$ such that $\sup(\{H^{\infty}(\beta)\mid\beta<\alpha\})<\theta<H^{\infty}(\alpha)$.

Pick $m_0$ such that $e_1(m_0)=\theta$. By (\ref{infinite condiition}), pick  $i>m_0$ such that $e_0(i)=\alpha$. Since $e_1(m_0)=\theta<H^{\infty}(\alpha)$, by (\ref{def of fun we want}), $\theta\leq\sup(\{e_1(m)\mid m\leq i\wedge  e_1(m)<\pi^{\infty}_1(i)\})$. By (\ref{def of fun we want}), $\pi^{\infty}_1(\pi^{\infty}_2(i))=H^{\infty}(e_0(\pi^{\infty}_2(i)))$. By (\ref{key if limit}), $\theta<H^{\infty}(e_0(\pi^{\infty}_2(i)))$ and $e_0(\pi^{\infty}_2(i))<e_0(i)=\alpha$. But $\sup(\{H^{\infty}(\beta)\mid\beta<\alpha\})<\theta$. Contradiction.\footnote{The proof of Theorem \ref{second pre thm} depends on (\ref{def of S}) and property of Baumgartner's forcing. In fact, its proof only uses the part  $(\forall\eta\in S)(\exists\delta>\eta(L_{\delta}[A\cap\eta]\models  Z_3\, + \,\eta=\omega_1))$ in (\ref{def of S}).}
\end{proof}
\end{proof}

\begin{theorem}\label{B forcing key representation theorem}
Suppose $f\in P_{S}^{B}$ where $f=\{(\eta, \eta)\}$. Then
\[(P_{S}^{B})_{f}=\{ g\cup \{(\eta, \eta)\}\mid g\in   (P_{S\cap\eta}^{B})^{L_{\alpha_{\eta}}[A\cap \eta]}\}. \]
\end{theorem}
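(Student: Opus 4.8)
The plan is simply to chain together the two results immediately preceding this theorem, both of which are stated under exactly the hypothesis $\{(\eta,\eta)\}\in P_S^B$ that we are given. First I would invoke Lemma \ref{lemma for tree representation}, which identifies the upper cone $(P_S^B)_f$ below $f=\{(\eta,\eta)\}$ with the set $\{g\cup\{(\eta,\eta)\}\mid g\in P_S^B\upharpoonright\eta\}$, i.e. it describes conditions in $(P_S^B)_f$ as those obtained by adjoining the pair $(\eta,\eta)$ to a condition of $P_S^B$ all of whose domain and range lie below $\eta$. Then I would apply Theorem \ref{second pre thm}, which gives the key absoluteness/computability statement $(P_{S\cap\eta}^B)^{L_{\alpha_\eta}[A\cap\eta]}=P_S^B\upharpoonright\eta$: the forcing notion $P_S^B\upharpoonright\eta$ is, externally, nothing other than Baumgartner's forcing for $S\cap\eta$ as computed inside $L_{\alpha_\eta}[A\cap\eta]$. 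Substituting this equality into the right-hand side produced by Lemma \ref{lemma for tree representation} yields exactly
\[(P_S^B)_f=\{g\cup\{(\eta,\eta)\}\mid g\in (P_{S\cap\eta}^B)^{L_{\alpha_\eta}[A\cap\eta]}\},\]
which is the desired conclusion.

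In writing this up I would take a moment to check that the two cited results apply verbatim: Lemma \ref{lemma for tree representation} and Theorem \ref{second pre thm} each assume precisely $f=\{(\eta,\eta)\}\in P_S^B$, so no extra genericity, stationarity, or admissibility hypothesis needs to be reverified here. I would also remark that the two identifications are as partial orders, not merely as underlying sets — the ordering on $(P_S^B)_f$ is inherited from $P_S^B$, the ordering on $P_S^B\upharpoonright\eta$ is likewise the restriction, and the Baumgartner order on $(P_{S\cap\eta}^B)^{L_{\alpha_\eta}[A\cap\eta]}$ (namely $g'\leq g$ iff $g\subseteq g'$) is absolute between $L_{\alpha_\eta}[A\cap\eta]$ and the ground model. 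Hence the bijection $g\mapsto g\cup\{(\eta,\eta)\}$ is automatically order-preserving in both directions, so the equality of sets upgrades to an isomorphism of forcing notions, which is the form in which the theorem will actually be used later.

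There is no genuine obstacle in this step: all of the real work — the tree argument showing that a witnessing continuous increasing enumeration can be found inside $L_{\alpha_\eta}[A\cap\eta]$, and the use of (\ref{def of S}) together with Lemma \ref{Basic properties of S}(b) to see that $S\cap(H(\xi)+1)$ and the relevant parameters lie in $L_{\alpha_\eta}[A\cap\eta]$ — has already been carried out in the proof of Theorem \ref{second pre thm}. The present theorem is the packaging of that computation into the concrete description of the upper cone $(P_S^B)_f$ that the factorization/genericity arguments of Step Three require. If anything deserves a sentence of care, it is only the bookkeeping observation that for $g\in P_S^B\upharpoonright\eta$ the union $g\cup\{(\eta,\eta)\}$ is again a function (since $\eta\notin\operatorname{dom}(g)$, as $\operatorname{dom}(g)\subseteq\eta$) and is genuinely a condition of $P_S^B$ extending $f$, which is exactly what Lemma \ref{lemma for tree representation} already records.
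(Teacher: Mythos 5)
Your proposal is correct and is exactly the paper's own proof: the theorem is obtained by substituting the identity $(P_{S\cap\eta}^{B})^{L_{\alpha_{\eta}}[A\cap \eta]}=P_{S}^{B}\upharpoonright\eta$ from Theorem \ref{second pre thm} into the description of $(P_{S}^{B})_{f}$ given by Lemma \ref{lemma for tree representation}. The additional remarks about order-preservation and well-definedness of $g\cup\{(\eta,\eta)\}$ are harmless elaborations of what those two results already record.
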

\begin{proof}
Follows from Lemma \ref{lemma for tree representation} and Theorem \ref{second pre thm}.
\end{proof}

Suppose $G^{\ast}$ is $P_{S}^{B}$-generic over $L[B_0,A]$. Define $F_{G^{\ast}}=\bigcup\{f\mid f\in G^{\ast}\}$. Then $F_{G^{\ast}}: \omega_1\rightarrow S$ is increasing and continuous. Let $C=ran(F_{G^{\ast}})$. Then $C\subseteq S$ is a club in $\omega_1$. Let $Lim(C)=\{\alpha\mid\alpha$ is a limit point of $C$\}. Now we work in $L[B_0,A,C]$.

\begin{fact}\label{forcing preserving system lemma}
(Folklore, \cite{Woodinfirst})\quad Suppose $M \models  Z_3, P\in M$ is a forcing notion, $M\models |P|\leq\omega_1$ and $G$ is $P$-generic over $M$. If $M\models P$ preserves $\omega_1$, then $M[G]\models Z_3$.
\end{fact}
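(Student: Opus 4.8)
The plan is to verify, in $M[G]$ and in this order, the three ingredients of $Z_3$: the $ZFC^-$ axioms, the existence of $\mathcal{P}(\omega)$, and ``every set has cardinality $\le\beth_1$'' (the last using the second). The first part is the standard fact that set forcing preserves $ZFC^-$ with Collection. Since $P$ is a set, the forcing relation $\Vdash_P$ is definable over $M$ by the usual recursion on names and on formulas, and the Truth and Definability Lemmas hold already over $ZFC^-$; one then gets Extensionality, Foundation, Pairing, Union and Infinity for $M[G]$ at once, Separation from Separation in $M$ plus definability of $\Vdash_P$, Collection from Collection in $M$ (apply it to the relation ``$p\Vdash_P\varphi(\dot u,\dot v)$'' over the set $\operatorname{dom}(\dot a)\times P$ to bound the names $\dot v$ that have to be collected, then build the witnessing name), and Choice from Choice in $M$ via a well-order of the $P$-names. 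I would only flag that this uses the Collection schema essentially, and that no class-forcing pathologies of $ZFC^-$ occur because $P$ is a set.

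The substantive part is that $\mathcal{P}(\omega)$ exists in $M[G]$, and here both $M\models|P|\le\omega_1$ and $M\models P$ preserves $\omega_1$ are needed. Working in $M$, fix a surjection $e$ of some $\delta\le\omega_1$ onto $P$. For a $P$-name $\dot r$ for a subset of $\omega$, taken in the normal form in which its elements are pairs $(\check n,p)$, and for $\gamma<\omega_1$ put $\dot r^{\gamma}=\dot r\cap(\{\check n:n<\omega\}\times e[\gamma])$, a name supported on the countable set $e[\gamma]$. Let $\mathcal{N}$ be the class of $P$-names that are subsets of $\{\check n:n<\omega\}\times e[\gamma]$ for some $\gamma<\omega_1$; each such $e[\gamma]$ is countable in $M$, so $\mathcal{N}$ injects into $\omega_1\times\mathcal{P}(\omega)^M$ and is therefore a set of $M$. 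Now let $G$ be generic and $r\in\mathcal{P}(\omega)^{M[G]}$, with normal-form name $\dot r\in M$. For $n\in r$ set $\xi_n=\min\{\xi<\delta:e(\xi)\in G\text{ and }(\check n,e(\xi))\in\dot r\}$; this exists because $n\in\dot r_G$. Since $P$ preserves $\omega_1$, $\omega_1^{M[G]}=\omega_1^M$ is regular in $M[G]$, so the countable set $\{\xi_n:n\in r\}$ is bounded below $\omega_1^M$, say by $\gamma$. Then $\dot r^{\gamma}\in\mathcal{N}$ and $(\dot r^{\gamma})_G=r$ (both inclusions being immediate from the choice of $\gamma$ and the Truth Lemma for $\dot r$). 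Hence $\mathcal{P}(\omega)^{M[G]}=\{\sigma_G:\sigma\in\mathcal{N}\}$ is the image of the set $\mathcal{N}\in M$ under the evaluation map $\sigma\mapsto\sigma_G$, which is definable over $M[G]$; so $\mathcal{P}(\omega)^{M[G]}$ is a set of $M[G]$ by Replacement and Separation.

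For the third ingredient, note first that $P$ preserves every cardinal of $M$: it is $\omega_2$-c.c.\ in $M$ because $|P|\le\omega_1$, so it preserves cardinals $\ge\omega_2$, and it preserves $\omega_1$ by hypothesis (and $\omega$ trivially). Given $y\in M[G]$ with $P$-name $\tau\in M$, one has $M[G]\models y\subseteq\{\sigma_G:\sigma\in\operatorname{dom}(\tau)\}$, whence $|y|^{M[G]}\le|\operatorname{dom}(\tau)|^{M[G]}=|\operatorname{dom}(\tau)|^{M}\le\beth_1^M\le\beth_1^{M[G]}$, the penultimate inequality because $M\models Z_3$ and the last because $\mathcal{P}(\omega)^M\subseteq\mathcal{P}(\omega)^{M[G]}$ with cardinals preserved. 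The delicate point is the middle paragraph: the naive idea of collecting all nice $P$-names for reals fails, since with $|P|\le\omega_1$ a maximal antichain may have size $\omega_1$ and $\mathcal{P}(\omega_1)$ is not a set of $M$, so one must trim each such name to a countable and, crucially, \emph{bounded} piece of $P$ — and it is precisely the regularity of $\omega_1^{M[G]}$, i.e.\ $\omega_1$-preservation, that permits this.
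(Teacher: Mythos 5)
The paper offers no proof of this Fact at all --- it is stated as folklore and attributed to a personal communication --- so there is nothing to compare your argument against except its own internal correctness, and on that score it holds up. Your three-part decomposition is the natural one, and you have correctly isolated the only genuinely delicate point: in a $Z_3$ model $\mathcal{P}(\omega_1)$ need not exist, so the usual ``collect all nice names built from antichains'' argument for the survival of $\mathcal{P}(\omega)$ is unavailable when $|P|=\omega_1$, and one must instead trim each name for a real to a piece supported on a \emph{bounded} (hence countable) initial segment of an enumeration of $P$, which is exactly what the preservation of $\omega_1$ (via the regularity of $\omega_1^{M[G]}$ in $M[G]\models ZFC^{-}$) licenses. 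The remaining steps are all in order: $ZFC^{-}$ with Collection (as the paper's footnote defines it) is preserved by set forcing; the class $\mathcal{N}$ of trimmed names is a set of $M$ because for each $\gamma<\omega_1$ the subsets of the countable set $\omega\times e[\gamma]$ form a set in bijection with $\mathcal{P}(\omega)^{M}$, and one takes a union over $\omega_1$ by Collection; both inclusions of $(\dot r^{\gamma})_G=r$ are immediate; and the $\beth_1$ bound follows by composing the injections $y\hookrightarrow\operatorname{dom}(\tau)\hookrightarrow\mathcal{P}(\omega)^{M}\subseteq\mathcal{P}(\omega)^{M[G]}$ (for which, as you could note, full cardinal preservation is not even strictly needed, though it is true and your $\omega_2$-c.c.\ argument for it is fine). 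I see no gap.
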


\begin{theorem}\label{thoerem on the key property of B forcing}
Suppose $\eta\in Lim(C)$. Then
\[L_{\alpha_{\eta}}[A\cap\eta, C\cap\eta]\models\eta=\omega_1\Leftrightarrow L_{\alpha_{\eta}}[A\cap\eta, C\cap\eta]\models Z_3.\]
\end{theorem}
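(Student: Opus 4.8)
\emph{Overall plan.} I would prove the two implications separately; the forward direction ``$\models\eta=\omega_1\ \Rightarrow\ \models Z_3$'' is the substantive one (it is what Step Three needs), and the converse is comparatively soft. The whole argument turns on the remark that, since $\eta\in\mathrm{Lim}(C)$ and $C$ is closed, $C\cap\eta$ is a club in $\eta$, and on isolating the single relevant condition $\{(\eta,\eta)\}$: I plan to show that $L_{\alpha_\eta}[A\cap\eta,C\cap\eta]\models\eta=\omega_1$ holds iff $\{(\eta,\eta)\}\in G^*$ (equivalently $F_{G^*}(\eta)=\eta$, equivalently $o.t.(C\cap\eta)=\eta$), and that under this condition Theorem \ref{B forcing key representation theorem} exhibits $C\cap\eta$ as a generic over $L_{\alpha_\eta}[A\cap\eta]$ for its own Baumgartner forcing, whence Fact \ref{forcing preserving system lemma} delivers $Z_3$. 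Before either direction I would record the basic fact used in both: for any $\beta<\eta$, since $C\cap\eta$ is unbounded in $\eta$ there is $\beta'\in C\cap\eta$ with $\beta<\beta'$; as $\beta'\in S$, Lemma \ref{Basic properties of S}(a) gives $L_{\beta'}[A]=L_{\beta'}[A\cap\beta']\models``\beta$ is countable'', and since $\beta'<\eta\le\alpha_\eta$ this structure lies inside $L_{\alpha_\eta}[A\cap\eta,C\cap\eta]$. Hence every ordinal $<\eta$ is countable there, so $\omega_1^{L_{\alpha_\eta}[A\cap\eta,C\cap\eta]}\ge\eta$ whenever that model satisfies ``$\omega_1$ exists''.

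\emph{The direction ($\Rightarrow$).} Assume $M:=L_{\alpha_\eta}[A\cap\eta,C\cap\eta]\models\eta=\omega_1$. Since $M$ carries a definable well-ordering it satisfies $\mathrm{AC}$, so $\eta=\omega_1$ is regular in $M$; as $C\cap\eta\in M$ is unbounded in $\eta$ this forces $o.t.(C\cap\eta)=\eta$ (if $o.t.(C\cap\eta)=\xi<\eta$, then using surjections $\omega\to g(\zeta)$ for $\zeta<\xi$, where $g$ enumerates $C\cap\eta$, and a surjection $\omega\to\xi$, one builds a surjection $\omega\to\eta$ in $M$, contradicting that $\eta$ is uncountable in $M$). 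Equivalently $F_{G^*}\upharpoonright(\eta+1)$ witnesses $\{(\eta,\eta)\}\in P_S^B$, and $\{(\eta,\eta)\}\in G^*$. By Lemma \ref{Bamber forcing key lemma}, $G^*\cap(P_S^B)_{\{(\eta,\eta)\}}$ is $(P_S^B)_{\{(\eta,\eta)\}}$-generic over $L[B_0,A]$; by Theorem \ref{B forcing key representation theorem} the map $g\mapsto g\cup\{(\eta,\eta)\}$ is an isomorphism of $(P^B_{S\cap\eta})^{L_{\alpha_\eta}[A\cap\eta]}$ onto $(P_S^B)_{\{(\eta,\eta)\}}$, so the induced filter is $(P^B_{S\cap\eta})^{L_{\alpha_\eta}[A\cap\eta]}$-generic over $L[B_0,A]$, a fortiori over $L_{\alpha_\eta}[A\cap\eta]$, and the club it adjoins is exactly $ran(F_{G^*}\upharpoonright\eta)=C\cap\eta$; thus $M=L_{\alpha_\eta}[A\cap\eta][C\cap\eta]$ is a generic extension of $L_{\alpha_\eta}[A\cap\eta]$ by $(P^B_{S\cap\eta})^{L_{\alpha_\eta}[A\cap\eta]}$. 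Now $L_{\alpha_\eta}[A\cap\eta]\models Z_3+\eta=\omega_1$ by the choice of $\alpha_\eta$, and — the point — $L_{\alpha_\eta}[A\cap\eta]\models``S\cap\eta$ is stationary in $\omega_1$'': this is the relativization to $\eta\in S$ of the Step Two proof that $S$ contains a club, namely inside $L_{\alpha_\eta}[A\cap\eta]$ the set of $Y\cap\eta$ with $Y\prec L_{\alpha_\eta}[A\cap\eta]$ countable contains a club and is contained in the local version of $Z$, which coincides with $Z\cap\eta$ by Lemma \ref{Basic properties of S}(b), and intersecting with the club of $L$-cardinals below $\eta$ (a club since the cofinal set $C\cap\eta$ consists of $L$-cardinals) yields $S\cap\eta$. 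Hence, by Fact \ref{Baumgartner fact on forcing notion}, $L_{\alpha_\eta}[A\cap\eta]\models``|P^B_{S\cap\eta}|=\omega_1$ and $P^B_{S\cap\eta}$ preserves $\omega_1$'', and Fact \ref{forcing preserving system lemma} applies to give $M\models Z_3$.

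\emph{The direction ($\Leftarrow$).} Assume $M\models Z_3$; then $\omega_1^M$ is defined and $\ge\eta$ by the remark above, so it is enough to exclude $o.t.(C\cap\eta)<\eta$. If $o.t.(C\cap\eta)=\xi<\eta$, then as in the previous paragraph $\eta$ is collapsed (indeed singularized of cofinality $<\eta$) in $M$; I would argue that this is incompatible with $M\models Z_3$, using the minimality of $\alpha_\eta$: the structure $L_{\alpha_\eta}[A\cap\eta]$ is only just tall enough to model $Z_3$ with $\omega_1=\eta$, and adjoining the collapsing predicate $C\cap\eta$ — which inserts a cofinal map of length $<\eta$ into $\eta$ — destroys enough Replacement that no $L_{\alpha'}[A\cap\eta,C\cap\eta]$ with $\alpha'\le\alpha_\eta$ can model $Z_3$; hence $o.t.(C\cap\eta)=\eta$, we are back in the situation of ($\Rightarrow$), $\eta=\omega_1$ is preserved between $L_{\alpha_\eta}[A\cap\eta]$ and $M$ (Baumgartner's forcing preserves $\omega_1$), and so $\omega_1^M=\eta$, i.e. $M\models\eta=\omega_1$.

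\emph{Main obstacle.} The delicate step is the identification, under $o.t.(C\cap\eta)=\eta$, of $C\cap\eta$ as a $(P^B_{S\cap\eta})^{L_{\alpha_\eta}[A\cap\eta]}$-generic over $L_{\alpha_\eta}[A\cap\eta]$ together with the verification that $S\cap\eta$ is stationary in $\omega_1$ inside $L_{\alpha_\eta}[A\cap\eta]$ — i.e. that the local Baumgartner forcing computed in the minimal $Z_3$-model over $A\cap\eta$ genuinely coincides with the tail $(P_S^B)_{\{(\eta,\eta)\}}$ of the global forcing. This is exactly what Theorem \ref{B forcing key representation theorem} (via Theorem \ref{second pre thm}, and ultimately the defining property (\ref{def of S}) of $S$) was designed to provide, and it is also the reason the construction used Baumgartner's forcing rather than Harrington's.
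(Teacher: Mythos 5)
Your forward direction is essentially the paper's argument: from $M:=L_{\alpha_\eta}[A\cap\eta,C\cap\eta]\models\eta=\omega_1$ you deduce $o.t.(C\cap\eta)=\eta$, hence $\{(\eta,\eta)\}\in G^{\ast}$, and then Lemma \ref{Bamber forcing key lemma} plus Theorem \ref{B forcing key representation theorem} exhibit $C\cap\eta$ as $(P^{B}_{S\cap\eta})^{L_{\alpha_\eta}[A\cap\eta]}$-generic over $L_{\alpha_\eta}[A\cap\eta]$, after which Facts \ref{Baumgartner fact on forcing notion} and \ref{forcing preserving system lemma} give $Z_3$. The one place you diverge is the proof that $L_{\alpha_\eta}[A\cap\eta]\models ``S\cap\eta$ is stationary''. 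You propose to relativize the Step Two argument (countable $Y\prec L_{\alpha_\eta}[A\cap\eta]$, a local version of $Z$, Lemma \ref{Basic properties of S}(b)); this can probably be made to work, but it requires re-verifying (\ref{key resut in section two}) internally and checking that the relevant witnesses satisfy the admissibility clause in the definition of $Z$, none of which you carry out. The paper's proof is a two-line contradiction: a club $E\in L_{\alpha_\eta}[A\cap\eta]$ disjoint from $S\cap\eta$ would, inside $M$, be a club disjoint from the club $C\cap\eta\subseteq S\cap\eta$, which is impossible because $M\models\eta=\omega_1$. (That argument is only available under the hypothesis $M\models\eta=\omega_1$, which is exactly the hypothesis of this direction.)

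The converse is where you have a genuine gap. You reduce to excluding $o.t.(C\cap\eta)<\eta$ and then assert that inserting a short cofinal map into $\eta$ ``destroys enough Replacement that no $L_{\alpha'}[A\cap\eta,C\cap\eta]$ with $\alpha'\le\alpha_\eta$ can model $Z_3$.'' This is not an argument, and Replacement is not the right obstruction; as written the direction does not close. The paper's converse is a short cardinality computation that never mentions $o.t.(C\cap\eta)$: suppose $M\models Z_3$ but $M\not\models\eta=\omega_1$. By your own opening remark every ordinal $<\eta$ is countable in $M$, so $\eta<\omega_1^{M}$. Then $\omega_1^{M}$ is an uncountable cardinal of $M$, hence (by downward absoluteness of cardinality to a transitive submodel with the same ordinals) a cardinal of $L_{\alpha_\eta}[A\cap\eta]$ above $\eta$. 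But $L_{\alpha_\eta}[A\cap\eta]\models Z_3\,+\,\eta=\omega_1$ together with $V=L[A\cap\eta]$, $A\cap\eta\subseteq\omega_1$, forces $\eta$ to be the largest cardinal of that model --- contradiction. You should replace your $(\Leftarrow)$ with this argument; it also spares you the circularity risk of invoking genericity and $\omega_1$-preservation (which require the stationarity of $S\cap\eta$) before $\eta=\omega_1^{M}$ has been established.
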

\begin{proof}
$(\Rightarrow)$  Suppose $L_{\alpha_{\eta}}[A\cap\eta, C\cap\eta]\models\eta=\omega_1$. Then
\begin{equation}\label{fact on there is club}
\text{$L_{\alpha_{\eta}}[A\cap\eta, C\cap\eta]\models C\cap \eta$ is a club in $\eta$.}
\end{equation}
We show that
\begin{equation}\label{fact on stationary}
\text{$L_{\alpha_{\eta}}[A\cap\eta]\models S\cap\eta$ is stationary.}
\end{equation}

Suppose not. Then there exists a club $E$ in $\eta$ such that $E\in L_{\alpha_{\eta}}[A\cap \eta]$ and $E\cap S\cap\eta=\emptyset$. Then  $L_{\alpha_{\eta}}[A\cap\eta, C\cap\eta]\models E$ and $C\cap\eta$ are disjoint closed subsets of $\eta$. Contradiction.

By (\ref{fact on there is club}), $o.t.(C\cap \eta)=\eta$ and hence $\eta$ is the $\eta$-th element of $C$.  Since $F_{G^{\ast}}(\xi)$ is the $\xi$-th element of $C$, $F_{G^{\ast}}(\eta)=\eta$. Let $f=\{(\eta, \eta)\}$. Since $f\in G^{\ast}$, by Lemma \ref{Bamber forcing key lemma}, $G^{\ast}\cap (P_{S}^{B})_{f}$ is $(P_{S}^{B})_{f}$-generic over $V$. By Theorem \ref{B forcing key representation theorem},  $(P_{S}^{B})_{f}=\{h\cup \{(\eta, \eta)\}\mid h\in   (P_{S\cap\eta}^{B})^{L_{\alpha_{\eta}}[A\cap \eta]}\}$. So $G^{\ast}\cap (P_{S\cap\eta}^{B})^{L_{\alpha_{\eta}}[A\cap \eta]}$ is $(P_{S\cap\eta}^{B})^{L_{\alpha_{\eta}}[A\cap \eta]}$-generic over $L_{\alpha_{\eta}}[A\cap \eta]$ and hence
\begin{equation}\label{key generic fact}
\text{$C\cap\eta$ is $(P_{S\cap\eta}^{B})^{L_{\alpha_{\eta}}[A\cap \eta]}$-generic over $L_{\alpha_{\eta}}[A\cap \eta]$.}
\end{equation}

By (\ref{fact on stationary}), do  Baumgartner's forcing $P_{S\cap\eta}^{B}$ over $L_{\alpha_{\eta}}[A\cap \eta]$. Since $L_{\alpha_{\eta}}[A\cap\eta]\models Z_3$, by Fact \ref{Baumgartner fact on forcing notion},  $L_{\alpha_{\eta}}[A\cap\eta]\models ``|(P_{S\cap\eta}^{B})|=\omega_1$ and $P_{S\cap\eta}^{B}$ preserves $\omega_1"$. By (\ref{key generic fact}) and Fact \ref{forcing preserving system lemma}, $L_{\alpha_{\eta}}[A\cap\eta, C\cap\eta]\models Z_3$.

$(\Leftarrow)$ Suppose $L_{\alpha_{\eta}}[A\cap\eta, C\cap\eta]\models Z_3$. We show that $L_{\alpha_{\eta}}[A\cap\eta, C\cap\eta]\models\eta=\omega_1$. Suppose not. i.e. $\eta<\omega_1^{L_{\alpha_{\eta}}[A\cap\eta, C\cap\eta]}$. Since $L_{\alpha_{\eta}}[A\cap\eta]\subseteq L_{\alpha_{\eta}}[A\cap\eta, C\cap\eta]$, $\omega_1^{L_{\alpha_{\eta}}[A\cap\eta, C\cap\eta]}$ is a cardinal in $L_{\alpha_{\eta}}[A\cap\eta]$. But since $L_{\alpha_{\eta}}[A\cap\eta]\models Z_3\, + \,\eta=\omega_1, \eta=\omega_1^{L_{\alpha_{\eta}}[A\cap\eta]}$ is the largest cardinal in $L_{\alpha_{\eta}}[A\cap\eta]$. Contradiction.\footnote{The key step in  Theorem \ref{thoerem on the key property of B forcing} is to show that (\ref{fact on stationary}) implies (\ref{key generic fact}) which depends on the representation theorem for $(P_{S\cap\eta}^{B})^{L_{\alpha_{\eta}}[A\cap \eta]}$(Theorem \ref{second pre thm}).}
\end{proof}

As a summary, by (\ref{def of S}) and Theorem \ref{thoerem on the key property of B forcing}, $Lim(C)$ has the following properties:
\begin{equation}\label{first pro of c}
\text{$\forall\eta\in Lim(C)(\eta$ is an $L$-cardinal);}
\end{equation}
\begin{equation}\label{sec pro of c}
\text{$\forall\eta\in Lim(C)((\eta\leq\beta<\alpha_{\eta}\wedge\beta$ is $A\cap\eta$-admissible)$\rightarrow\beta$ is an $L$-cardinal);}
\end{equation}
\begin{equation}\label{thrd pro of c}
\text{$\forall\eta\in Lim(C)(L_{\alpha_{\eta}}[A\cap\eta, C\cap\eta]\models\eta=\omega_1\rightarrow L_{\alpha_{\eta}}[A\cap\eta, C\cap\eta]\models Z_3)$.}
\end{equation}

\subsection{Step Four}
In this step, we use properties of $Lim(C)$ to define the almost disjoint system on $\omega$ and some $B^{\ast}\subseteq \omega_1$. Then we do almost disjoint forcing to code $B^{\ast}$ by a real $x$. Finally, we use (\ref{first pro of c})-(\ref{thrd pro of c}) to show that $x$ is the witness real for {\sf HP}.

We still work in $L[B_0, A, C]$. Take  $\alpha$ and $X$ such that $L_{\alpha}[A]\models Z_3, X\prec L_{\alpha}[A,C]$, $|X|=\omega$ and $X\cap\omega_1\in Lim(C)$. Let $\eta=X\cap\omega_1$. The transitive collapse of $X$ is in the form $L_{\bar{\alpha}}[A\cap \eta,C\cap \eta]$. Note that $L_{\bar{\alpha}}[A\cap \eta]\models Z_3$ and
\begin{equation}\label{new equ today}
\text{$L_{\bar{\alpha}}[A\cap \eta,C\cap \eta]\models \eta=\omega_1$.}
\end{equation}
By (\ref{new equ today}), $L_{\bar{\alpha}}[A\cap \eta]\models \eta=\omega_1$. So $\alpha_{\eta}\leq\bar{\alpha}$. By (\ref{new equ today}), $L_{\alpha_{\eta}}[A\cap \eta,C\cap \eta]\models \eta=\omega_1$. Since $\eta\in Lim(C)$, by (\ref{thrd pro of c}), $L_{\alpha_{\eta}}[A\cap \eta,C\cap \eta]\models Z_3$. Let
\begin{equation}\label{definition of eta}
\text{$\eta^{\ast}$ be the least $\eta\in Lim(C)$ such that $L_{\alpha_{\eta}}[A\cap \eta,C\cap \eta]\models Z_3\, + \,\eta=\omega_1$.}
\end{equation}
Note that $\eta^{\ast}$ is a limit point of $Lim(C)$.\footnote{Suppose not. Let $\xi<\eta^{\ast}$ be the largest element of $Lim(C)$. Then $o.t.(C\cap (\eta^{\ast}\setminus (\xi+1)))=\omega$. But since $L_{\alpha_{\eta^{\ast}}}[A\cap \eta^{\ast}, C\cap \eta^{\ast}]\models \eta^{\ast}=\omega_1, L_{\alpha_{\eta^{\ast}}}[A\cap\eta^{\ast}, C\cap\eta^{\ast}]\models C\cap\eta^{\ast}$  is a club in $\eta^{\ast}$. Contradiction.}

\begin{lemma}\label{Basic properties of D}
Suppose $\eta\in Lim(C), \eta<\eta^{\ast}$ and  $\beta<\alpha_{\eta}$. Then  $L_{\alpha_{\eta}}[A\cap\eta, C\cap\eta]\models \beta<\omega_1$.
\end{lemma}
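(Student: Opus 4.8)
The plan is to compare the two transitive structures $M:=L_{\alpha_{\eta}}[A\cap\eta]$ and $N:=L_{\alpha_{\eta}}[A\cap\eta, C\cap\eta]$; they have the same ordinal height $\alpha_{\eta}$ and $M\subseteq N$. Since $\eta\in Lim(C)\subseteq S$, the notation $\alpha_{\eta}$ is defined and $M\models Z_3+\eta=\omega_1$. First I would observe that $\eta$ is the largest cardinal of $M$: indeed $M\models V=L[A\cap\eta]$ with $A\cap\eta\subseteq\eta=\omega_1^{M}$, so the observation recorded in the introduction that $Z_3$ together with $\exists A\subseteq\omega_1\,(V=L[A])$ implies that $\omega_1$ is the largest cardinal applies inside $M$. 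Hence for every $\beta$ with $\eta\leq\beta<\alpha_{\eta}$ there is, in $M$ and therefore in $N$, a surjection $f_{\beta}\colon\eta\to\beta$. Therefore the lemma reduces to showing that $\eta$ is countable in $N$: composing $f_{\beta}$ with a surjection $\omega\to\eta$ lying in $N$ witnesses in $N$ that every such $\beta$ is countable, while each $\beta<\eta$ is a fortiori countable in $N$.

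To show that $\eta$ is countable in $N$, I would use the hypothesis $\eta<\eta^{\ast}$ together with Theorem \ref{thoerem on the key property of B forcing}. By the choice of $\eta^{\ast}$ in (\ref{definition of eta}), $N$ does not satisfy $Z_3+\eta=\omega_1$; and since $\eta\in Lim(C)$, Theorem \ref{thoerem on the key property of B forcing} gives $N\models\eta=\omega_1$ if and only if $N\models Z_3$. So $N\models\eta=\omega_1$ would yield $N\models Z_3+\eta=\omega_1$, contradicting $\eta<\eta^{\ast}$; hence $N$ does not satisfy $\eta=\omega_1$. On the other hand $M\subseteq N$ is transitive and $M\models\eta=\omega_1$, so every ordinal below $\eta$ is countable in $M$, hence in $N$. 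Since every proper initial segment of $\eta$ is countable in $N$, the failure of ``$\eta=\omega_1$'' in $N$ can only be caused by $\eta$ itself being countable in $N$. This is essentially the whole argument; it runs parallel to the proof of Lemma \ref{Basic properties of S}(a), with the role played there by membership in $S$ now played by the non-largest-cardinal behaviour of $\eta$ in $N$ that is forced by Theorem \ref{thoerem on the key property of B forcing}.

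The point needing care is the meaning of ``$N\models\eta=\omega_1$'', since for $\eta<\eta^{\ast}$ the structure $N$ need not model $Z_3$ (in fact it does not): I would read it, exactly as Theorem \ref{thoerem on the key property of B forcing} does, as ``$\eta$ is the least ordinal that $N$ regards as uncountable'', and then the inclusion $M\subseteq N$ does the work, because any surjection $\omega\to\beta$ with $\beta<\eta$ already lies in $M$ and hence in $N$. The remaining matters are routine: $\alpha_{\eta}$ is a limit ordinal (it is the ordinal height of a model of $Z_3$), so $N$ is closed under composition of functions and the composites $\omega\to\eta\to\beta$ genuinely belong to $N$.
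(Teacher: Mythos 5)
Your proposal is correct and follows essentially the same route as the paper's proof: the paper likewise gets a surjection $f\colon\eta\to\beta$ in $L_{\alpha_{\eta}}[A\cap\eta]$ from $Z_3$ there, proves as a claim that $L_{\alpha_{\eta}}[A\cap\eta,C\cap\eta]\models\eta<\omega_1$ by combining (\ref{thrd pro of c}) (i.e.\ Theorem \ref{thoerem on the key property of B forcing}) with the minimality of $\eta^{\ast}$, and then composes with a surjection $g\colon\omega\to\eta$. Your extra care about what ``$\models\eta=\omega_1$'' means in a structure not satisfying $Z_3$ is a harmless elaboration of the same argument.
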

\begin{proof}
Since $L_{\alpha_{\eta}}[A\cap\eta]\models Z_3, L_{\alpha_{\eta}}[A\cap\eta]\models \forall \beta\in Ord(|\beta|\leq \omega_1)$. Since $L_{\alpha_{\eta}}[A\cap\eta]\models \eta=\omega_1$ and $\beta<\alpha_{\eta}$, there exists $f\in L_{\alpha_{\eta}}[A\cap\eta]$ such that $f: \eta\rightarrow\beta$ is surjective.

\begin{claim}
$L_{\alpha_{\eta}}[A\cap\eta, C\cap\eta]\models\eta<\omega_1$.
\end{claim}
\begin{proof}
Suppose $L_{\alpha_{\eta}}[A\cap\eta, C\cap\eta]\models\eta=\omega_1$. By (\ref{thrd pro of c}), $L_{\alpha_{\eta}}[A\cap\eta, C\cap\eta]\models Z_3$. By (\ref{definition of eta}), $\eta\geq\eta^{\ast}$. Contradiction.
\end{proof}

So there exists $g\in L_{\alpha_{\eta}}[A\cap\eta, C\cap\eta]$ such that $g: \omega\rightarrow \eta$ is surjective. So $f\circ g: \omega\rightarrow \beta$ is surjective and $f\circ g\in L_{\alpha_{\eta}}[A\cap\eta, C\cap\eta]$. Hence $L_{\alpha_{\eta}}[A\cap\eta, C\cap\eta]\models \beta<\omega_1$.
\end{proof}

Now we work in $L_{\alpha_{\eta^{\ast}}}[A\cap\eta^{\ast}, C\cap\eta^{\ast}]$. We first define an almost disjoint system $\langle\delta_{\beta}: \beta<\eta^{\ast}\rangle$ on $\omega$ and $B^{\ast}\subseteq \eta^{\ast}$. To define  $\langle\delta_{\beta}:\beta<\eta^{\ast}\rangle$ we first define $\langle f_{\beta}: \beta<\eta^{\ast}\rangle$ by induction on $\beta<\eta^{\ast}$.  Let $\langle f_{\beta}: \omega\rightarrow 1+\beta\mid  \beta<\omega\rangle$ be an uniformly defined sequence of recursive functions.\footnote{i.e. Take a recursive function $F:\omega\rightarrow \omega^{\omega}$ such that $F(\beta)(n)=f_{\beta}(n)$.} 

Fix $\omega\leq\beta<\eta^{\ast}$. Let  $\eta_0=\sup(Lim(C)\cap\beta)$ and $\eta_1=\min(C\setminus (\beta+1))$.
\begin{definition}\label{def of almost disjont syst}
\begin{enumerate}[(i)]
        \item Suppose $\eta_0=0$. Since $\eta_1\in C$ and $\beta<\eta_1$, by Lemma \ref{Basic properties of S}(a), $L_{\eta_1}[A]\models \beta$ is countable. Let $f_{\beta}:\omega\rightarrow\beta$ be the least surjection in $L_{\eta_1}[A]$.
        \item Suppose $\eta_0\neq 0$ and $\beta<\alpha_{\eta_0}$. Since $\eta_0\in Lim(C), \eta_0<\eta^{\ast}$ and $\beta<\alpha_{\eta_0}$, by Lemma \ref{Basic properties of D}, $L_{\alpha_{\eta_0}}[A\cap\eta_0, C\cap\eta_0]\models \beta<\omega_1$. Let $f_{\beta}: \omega\rightarrow\beta$ be the least surjection in $L_{\alpha_{\eta_0}}[A\cap \eta_0, C\cap  \eta_0]$.
            \item Suppose $\eta_0\neq 0$ and $\beta\geq\alpha_{\eta_0}$. Since $\eta_1\in S$ and $\beta<\eta_1$, by Lemma \ref{Basic properties of S}(a), $L_{\eta_1}[A]\models \beta$ is countable. Let $f_{\beta}: \omega\rightarrow\beta$ be the least surjection in $L_{\eta_1}[A]$.
      \end{enumerate}
\end{definition}

Now we define an almost disjoint system $\langle\delta_{\beta}: \beta<\eta^{\ast}\rangle$ on $\omega$ from $\langle f_{\beta}: \beta<\eta^{\ast}\rangle$. Fix a recursive bijection $\pi: \omega\leftrightarrow\omega\times\omega$. Let $x_{\beta}=\{(i,j)\mid  f_{\beta}(i)<f_{\beta}(j) \}$ and $y_{\beta}=\{k\in\omega\mid \pi(k)\in x_{\beta}\}$. Let $\langle s_i\mid i\in\omega\rangle$ be an injective, recursive enumeration of $\omega^{<\omega}$ and $\delta_{\beta}=\{i\in\omega\mid \exists m\in\omega(s_i=y_{\beta}\cap m)\}$. Then $\langle\delta_{\beta}: \beta<\eta^{\ast}\rangle$ is a sequence of almost disjoint reals. Since $\langle s_i\mid i\in\omega\rangle$ is recursive, $\pi$ is recursive and for any $i\in\omega, f_i$ is recursive, $\langle\delta_{i}: i\in\omega\rangle$ is recursive.

Now we define $B^{\ast}\subseteq \eta^{\ast}$. Fix $\beta<\eta^{\ast}$. We define $z_{\beta}$ as follows. Let
\begin{equation}\label{def of two next num}
\text{$\eta^{\beta}_0=\min(Lim(C)\setminus (\beta+1))$ and $\eta^{\beta}_1=\min(Lim(C)\setminus (\eta^{\beta}_0+1))$. }
\end{equation}
Note that $\eta^{\beta}_1<\eta^{\ast}$ since $\eta^{\ast}$ is a limit point of $Lim(C)$. By Lemma \ref{Basic properties of S}(b), $\alpha_{\eta^{\beta}_0}<\alpha_{\eta^{\beta}_1}$. By Lemma \ref{Basic properties of D}, $\alpha_{\eta^{\beta}_0}$ is countable in $L_{\alpha_{\eta^{\beta}_1}}[A\cap\eta^{\beta}_1, C\cap\eta^{\beta}_1]$. Let $z_{\beta}$ be the least real in $L_{\alpha_{\eta^{\beta}_1}}[A\cap\eta^{\beta}_1, C\cap\eta^{\beta}_1]$ such that
\begin{equation}\label{def of least real}
\text{$z_{\beta}$ codes $\langle \eta^{\beta}_0,  \alpha_{\eta^{\beta}_0}, A\cap\eta^{\beta}_0,  C\cap\eta^{\beta}_0\rangle$.}
\end{equation}\begin{equation}\label{def of b new}
\text{Define $B^{\ast}=\{\omega\cdot\alpha+i\mid \alpha<\eta^{\ast}\wedge i\in z_{\alpha}\}$.}
\end{equation}

By almost disjoint forcing, we get a  real $x$ such that for $\alpha<\eta^{\ast}$,
\begin{equation}\label{result of adf}
\text{$\alpha\in B^{\ast}\Leftrightarrow |x\cap \delta_{\alpha}|<\omega$. }
\end{equation}
Since $L_{\alpha_{\eta^{\ast}}}[A\cap\eta^{\ast},C\cap\eta^{\ast}]\models Z_3$ and $x$ is a generic real built via a $c.c.c$ forcing, by Fact \ref{forcing preserving system lemma}, $L_{\alpha_{\eta^{\ast}}}[A\cap\eta^{\ast},C\cap\eta^{\ast}][x]\models Z_3$. By (\ref{result of adf}), (\ref{def of b new}) and (\ref{def of least real}), $x$ codes $(A\cap\eta^{\ast}, C\cap\eta^{\ast})$ via $\langle\delta_{\beta}: \beta<\eta^{\ast}\rangle$. 

We want to show that $L_{\alpha_{\eta^{\ast}}}[A\cap\eta^{\ast},C\cap\eta^{\ast}][x]\models$ {\sf HP}. By absoluteness,  it suffices to show in $L[B_0, A, C, x]$ that if $\lambda<\alpha_{\eta^{\ast}}$ is $x$-admissible, then $\lambda$ is an $L$-cardinal. Now we work in $L[B_0, A, C, x]$. In the rest of this section, we fix $\lambda<\alpha_{\eta^{\ast}}$ and assume that
\begin{equation}\label{fact on admissibility}
\text{$\lambda$ is $x$-admissible.}
\end{equation}

Since $\langle\delta_{i}\mid i\in\omega\rangle$ is recursive, by (\ref{fact on admissibility}), $\langle\delta_{i}\mid i\in\omega\rangle\in L_{\lambda}[x]$. By (\ref{def of b new}), $B^{\ast}\cap \omega=z_0$. By (\ref{result of adf}), $B^{\ast}\cap \omega=\{i\in\omega\mid |x\cap \delta_{i}|<\omega\}$. By (\ref{fact on admissibility}),  $z_0\in L_{\lambda}[x]$.

\begin{definition}
$\theta=\sup(\{\beta<\eta^{\ast}\mid z_{\beta}\in L_{\lambda}[x]\})$ and  $\gamma=\sup(\{\eta^{\beta}_0\mid \beta<\theta\})$.
\end{definition}

By (\ref{def of two next num}) and (\ref{def of least real}), for $\beta<\eta^{\ast}$, $z_{\beta}=z_{\beta+1}$. So $\theta$ is a limit ordinal. By (\ref{def of least real}), if $\beta_0<\beta_1<\eta^{\ast}$, then $z_{\beta_0}$ is recursive in $z_{\beta_1}$. So if  $\beta<\theta$, then by (\ref{fact on admissibility}), $z_{\beta}\in L_{\lambda}[x]$. Note that  $z_{\beta}$ codes $(A\cap\eta^{\beta}_0,  C\cap\eta^{\beta}_0)$ for $\beta<\theta$ and hence $(A\cap\gamma,  C\cap\gamma)\in L_{\lambda}[x]$.

\begin{lemma}\label{definability lemma}
Suppose $\theta<\lambda$. Then $\langle z_{\beta}\mid\beta<\theta\rangle$ is $\Sigma_1$-definable in $L_{\lambda}[x]$ from $(A\cap\gamma,  C\cap\gamma)$.
\end{lemma}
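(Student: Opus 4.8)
The plan is to exhibit a $\Sigma_1$ formula over $L_\lambda[x]$, using $(A\cap\gamma, C\cap\gamma)$ as a parameter, whose solutions are exactly the pairs $(\beta, z_\beta)$ with $\beta<\theta$. The key observation is that, by the very definition of $z_\beta$ in \eqref{def of two next num}--\eqref{def of least real}, the value $z_\beta$ depends only on the data $\langle\eta^\beta_0, \alpha_{\eta^\beta_0}, A\cap\eta^\beta_0, C\cap\eta^\beta_0\rangle$, and for $\beta<\theta$ all of $\eta^\beta_0$ and $\eta^\beta_1$ lie below $\gamma$, so the relevant initial segments of $A$ and $C$ are computable from the parameter $(A\cap\gamma, C\cap\gamma)$. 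First I would check that $\langle z_\beta\mid\beta<\theta\rangle$ and $\gamma$ are well-defined objects of $L_\lambda[x]$: since $\theta<\lambda$ and each $z_\beta$ ($\beta<\theta$) is in $L_\lambda[x]$ (as noted just before the lemma, using that $z_{\beta_0}$ is recursive in $z_{\beta_1}$ for $\beta_0<\beta_1$ together with $x$-admissibility), the sequence is a set of reals indexed by $\theta<\lambda$, and $(A\cap\gamma, C\cap\gamma)\in L_\lambda[x]$ as already shown.

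Next I would write out the recursion. Given the parameter $p=(A\cap\gamma, C\cap\gamma)$, one recovers $Lim(C)\cap\gamma$ and hence, for each $\beta<\gamma$, the ordinals $\eta^\beta_0$ and $\eta^\beta_1$ defined in \eqref{def of two next num} (these are first two elements of $Lim(C)$ above $\beta$, which lie below $\gamma$ whenever $\beta<\theta$ since $\gamma=\sup\{\eta^\beta_0\mid\beta<\theta\}$ and $\eta^\beta_1<\eta^{\beta'}_0$ for suitable larger $\beta'<\theta$). From $C\cap\eta^\beta_1$ and $A\cap\eta^\beta_1$ one reconstructs the model $L_{\alpha_{\eta^\beta_1}}[A\cap\eta^\beta_1, C\cap\eta^\beta_1]$ — here one uses that $\alpha_{\eta^\beta_1}$ is determined by the $\Sigma_1$ condition ``least $\alpha>\eta^\beta_1$ with $L_\alpha[A\cap\eta^\beta_1]\models Z_3+\eta^\beta_1=\omega_1$'' — and then $z_\beta$ is the $<_L$-least real in that model coding $\langle\eta^\beta_0,\alpha_{\eta^\beta_0},A\cap\eta^\beta_0,C\cap\eta^\beta_0\rangle$, exactly as in \eqref{def of least real}. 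Each of these steps ``there exists a level $L_\delta[\,\cdots]$ satisfying such-and-such and containing the desired witness'' is $\Sigma_1$ over $L_\lambda[x]$, so the graph of $\beta\mapsto z_\beta$ is $\Sigma_1(p)$; the full sequence $\langle z_\beta\mid\beta<\theta\rangle$ is then $\Sigma_1(p)$ as well, being ``the set of pairs $(\beta,z)$ such that $\beta<\theta$ and $z=z_\beta$'', where $\beta<\theta$ is itself expressible as ``$z_\beta$ exists in $L_\lambda[x]$'', i.e.\ $\exists\, L_\delta[x]$ with $\delta<\lambda$ in which the reconstruction of $z_\beta$ succeeds.

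The step I expect to be the main obstacle is verifying that all the auxiliary models $L_{\alpha_{\eta^\beta_j}}[A\cap\eta^\beta_j, C\cap\eta^\beta_j]$, and in particular the ordinals $\alpha_{\eta^\beta_j}$, are actually \emph{below} $\lambda$ — i.e.\ that the witnessing levels for the $\Sigma_1$ statements genuinely appear inside $L_\lambda[x]$ rather than merely in $V$. For $\beta<\theta$ this should follow because $z_\beta\in L_\lambda[x]$ already, and $z_\beta$ by construction codes enough of this data that, running the reconstruction inside $L_\lambda[x]$, $x$-admissibility ($\Sigma_1$-collection in $L_\lambda[x]$) closes one off under the passage ``from $z_\beta$ build $L_{\alpha_{\eta^\beta_0}}[A\cap\eta^\beta_0, C\cap\eta^\beta_0]$''. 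I would make this precise by noting that $z_\beta$ codes $\alpha_{\eta^\beta_0}$ directly via \eqref{def of least real}, so $\alpha_{\eta^\beta_0}<\lambda$ is immediate, and $\alpha_{\eta^\beta_0}$ is the relevant bound since $\eta^{\beta}_1$ for smaller $\beta$ is dominated by $\eta^{\beta'}_0$ for larger $\beta'<\theta$; the remaining bookkeeping (that $\gamma\le\lambda$, that the recursion is uniform, that the $<_L$-least-real clause is absolute to the correctly reconstructed level) is routine.
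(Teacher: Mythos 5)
Your proposal is correct and follows essentially the same route as the paper: one writes a $\Sigma_1$ formula that existentially quantifies over limit levels $\lambda_0<\lambda$ of $L[x]$, reconstructs $\eta^{\beta}_0$, $\alpha_{\eta^{\beta}_0}$, $A\cap\eta^{\beta}_0$, $C\cap\eta^{\beta}_0$ from $\beta$ and the parameter $(A\cap\gamma,C\cap\gamma)$ inside $L_{\lambda_0}[x]$, and uses $z_{\beta}\in L_{\lambda}[x]$ together with $x$-admissibility to guarantee that a witnessing level exists below $\lambda$. The only real difference is that the paper's formula simply asserts that $z$ is the $<_{L_{\lambda_0}[x]}$-least real coding that tuple (invoking absoluteness), rather than literally rebuilding $L_{\alpha_{\eta^{\beta}_1}}[A\cap\eta^{\beta}_1,C\cap\eta^{\beta}_1]$ as you propose, which spares it the extra bookkeeping you flag about bounding $\alpha_{\eta^{\beta}_1}$ below $\lambda$.
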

\begin{proof}
If $\beta<\theta$, then $z_{\beta}\in L_{\lambda}[x]$ and hence by (\ref{def of least real})  and (\ref{fact on admissibility}), there exists $\lambda_0<\lambda$ such that  $\lambda_0$ is a limit ordinal and $\langle \eta^{\beta}_0,  \alpha_{\eta^{\beta}_0}, A\cap\eta^{\beta}_0,  C\cap\eta^{\beta}_0\rangle\in L_{\lambda_0}[x]$. We can find a formula $\varphi(\alpha,z,\beta,x, A\cap\gamma,C\cap\gamma)$ which says that $\langle \eta^{\beta}_0,  \alpha_{\eta^{\beta}_0}, A\cap\eta^{\beta}_0,  C\cap\eta^{\beta}_0\rangle$ is countable in  $L_{\alpha}[x]$ and $z$ is the $<_{L_{\alpha}[x]}$-least  real which codes $\langle \eta^{\beta}_0,  \alpha_{\eta^{\beta}_0}, A\cap\eta^{\beta}_0,  C\cap\eta^{\beta}_0\rangle$. By absoluteness, for $\beta<\theta, z=z_{\beta}$ if and only if $\exists\lambda_0<\lambda(z\in L_{\lambda_0}[x]\wedge \lambda_0$ is a limit ordinal $\wedge L_{\lambda_0}[x]\models \varphi[\lambda_0, z,\beta,x, A\cap\gamma,C\cap\gamma])$.
\end{proof}

\begin{theorem}\label{proof of the main thesis theorem}
$\lambda$ is an $L$-cardinal.
\end{theorem}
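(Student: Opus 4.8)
The plan is to exhibit $\lambda$ as one of the ordinals to which (\ref{first pro of c}) or (\ref{sec pro of c}) applies directly. Two elementary facts about the $x$-admissible ordinal $\lambda$ of (\ref{fact on admissibility}) will be used throughout: first, every set of ordinals lying in $L_{\lambda}[x]$ is bounded below $\lambda$, so in particular every ordinal coded by a real of $L_{\lambda}[x]$ is $<\lambda$; and second, if $a\in L_{\lambda}[x]$ then $\lambda$ is $a$-admissible, because a $\Sigma_1(L_{\lambda}[a])$ function mapping some $\delta<\lambda$ cofinally into $\lambda$ would, using $a\in L_{\lambda}[x]$, be a $\Sigma_1(L_{\lambda}[x])$ function cofinal in $\lambda$, contradicting the $x$-admissibility of $\lambda$.

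The first step is to check that $\gamma\in Lim(C)$ and $\gamma\leq\lambda$. For each $\beta<\theta$ the ordinal $\eta^{\beta}_0=\min(Lim(C)\setminus(\beta+1))$ is a member of the closed set $Lim(C)$ and is coded by the real $z_{\beta}\in L_{\lambda}[x]$, hence $\eta^{\beta}_0<\lambda$; since $\{\eta^{\beta}_0\mid\beta<\theta\}$ is nonempty and $Lim(C)$ is closed, $\gamma=\sup\{\eta^{\beta}_0\mid\beta<\theta\}$ belongs to $Lim(C)$, and $\gamma\leq\lambda$. If $\gamma=\lambda$, then $\lambda\in Lim(C)$ and $\lambda$ is an $L$-cardinal by (\ref{first pro of c}); so from now on assume $\gamma<\lambda$.

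Since $\eta^{\beta}_0>\beta$ for every $\beta<\theta$ and $\theta$ is a limit ordinal, $\theta\leq\gamma<\lambda$, so Lemma \ref{definability lemma} applies: together with the already-noted fact that $(A\cap\gamma,C\cap\gamma)\in L_{\lambda}[x]$, it yields that $\langle z_{\beta}\mid\beta<\theta\rangle$ is $\Sigma_1$-definable in $L_{\lambda}[x]$ from $(A\cap\gamma,C\cap\gamma)$. By the second elementary fact above, $\lambda$ is $A\cap\gamma$-admissible. It remains to see that $\gamma\leq\lambda<\alpha_{\gamma}$; the lower bound is the case hypothesis, and if $\gamma=\eta^{\ast}$ the upper bound is immediate because $\lambda<\alpha_{\eta^{\ast}}=\alpha_{\gamma}$. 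If $\gamma<\eta^{\ast}$, suppose toward a contradiction that $\lambda\geq\alpha_{\gamma}$. Then $L_{\lambda}[x]$ has $(A\cap\gamma,C\cap\gamma)$ and sees the level $\alpha_{\gamma}$, so, running the uniform recipes of Step Four (Definition \ref{def of almost disjont syst} together with the definitions of $\langle f_{\beta}\rangle$ and $\langle z_{\beta}\rangle$) and using Lemma \ref{Basic properties of S} and Lemma \ref{Basic properties of D}, $L_{\lambda}[x]$ can reconstruct the almost disjoint family $\langle\delta_{\beta}\rangle$ far enough to decode $z_{\theta}$ from $x$; thus $z_{\theta}\in L_{\lambda}[x]$. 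But $z_{\theta}=z_{\theta+1}$, as noted after (\ref{def of least real}), so this contradicts $\theta=\sup\{\beta<\eta^{\ast}\mid z_{\beta}\in L_{\lambda}[x]\}$. Hence $\lambda<\alpha_{\gamma}$, and (\ref{sec pro of c}) applied with $\eta=\gamma\in Lim(C)$ gives that $\lambda$ is an $L$-cardinal.

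I expect the decoding step --- that $\lambda\geq\alpha_{\gamma}$ already forces $z_{\theta}\in L_{\lambda}[x]$ --- to be the main obstacle, since it requires pinning down exactly which levels $L_{\eta_1}[A]$ and $L_{\alpha_{\eta_0}}[A\cap\eta_0,C\cap\eta_0]$ are consulted in Definition \ref{def of almost disjont syst} to build the relevant $f_{\beta}$ and $\delta_{\beta}$, and then verifying, via Lemma \ref{Basic properties of S}(b) and Lemma \ref{Basic properties of D}, that the bound $\lambda\geq\alpha_{\gamma}$ genuinely provides enough room; one must also mind the case distinctions $\theta=\gamma$ versus $\theta<\gamma$ and the shape of the data $\langle\eta^{\theta}_0,\alpha_{\eta^{\theta}_0},A\cap\eta^{\theta}_0,C\cap\eta^{\theta}_0\rangle$ coded by $z_{\theta}$. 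The two elementary facts about $x$-admissibility and the identification of $\gamma$ with a member of $Lim(C)$ are routine by comparison.
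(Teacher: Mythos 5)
Your proposal is correct and follows essentially the same route as the paper: the same trichotomy ($\gamma=\lambda$ handled by (\ref{first pro of c}); $\lambda<\alpha_{\gamma}$ handled by (\ref{sec pro of c}) after observing that $\lambda$ is $A\cap\gamma$-admissible; and the remaining case killed by showing $L_{\lambda}[x]$ could then decode one more $z$, contradicting the definition of $\theta$). The only divergence is that for that contradiction the paper decodes $z_{\gamma}$ rather than $z_{\theta}$ --- since $\gamma\in Lim(C)$ gives $\omega\cdot\gamma=\gamma$ and puts each $\delta_{\gamma+i}$ squarely into case (ii) of Definition \ref{def of almost disjont syst}, with $f_{\gamma+i}$ the least surjection in $L_{\alpha_{\gamma}}[A\cap\gamma,C\cap\gamma]\subseteq L_{\lambda}[x]$ --- which sidesteps exactly the $\theta=\gamma$ versus $\theta<\gamma$ case analysis you flag as the main obstacle, the contradiction then being that $\gamma<\theta$ forces $\eta^{\gamma}_{0}\leq\gamma$ while $\eta^{\gamma}_{0}>\gamma$ by definition.
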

\begin{proof}
If $\beta<\theta$, then since $z_{\beta}$ codes $\eta^{\beta}_0$ and $z_{\beta}\in L_{\lambda}[x]$, by (\ref{fact on admissibility}),  $\beta<\eta^{\beta}_0<\lambda$. Hence $\theta\leq\lambda$ and $\gamma\leq\lambda$.

Case 1: $\theta=\lambda$. Then $\gamma=\sup(\{\eta^{\beta}_0\mid \beta<\lambda\})=\lambda$. Since $\gamma\in Lim(C)$, by (\ref{first pro of c}), $\lambda$ is an $L$-cardinal.

Case 2: $\theta<\lambda$. Since $(A\cap\gamma,  C\cap\gamma)\in L_{\lambda}[x]$, by Lemma \ref{definability lemma} and (\ref{fact on admissibility}), $\langle z_{\beta}\mid\beta<\theta\rangle\in L_{\lambda}[x]$. 

Subcase 1: $\alpha_{\gamma}\leq\lambda$. Since $\gamma, \eta^{\ast}\in Lim(C)$ and $\lambda<\alpha_{\eta^{\ast}}$, by Lemma \ref{Basic properties of S}(b), $\gamma<\eta^{\ast}$. For $i\in\omega$, since $\gamma+i<\alpha_{\gamma}$, by Definition \ref{def of almost disjont syst}(ii),  $f_{\gamma+i}: \omega\rightarrow \gamma+i$ is the least surjection in $L_{\alpha_{\gamma}}[A\cap\gamma, C\cap\gamma]$.\footnote{This is the place we use (\ref{thrd pro of c}): Definition \ref{def of almost disjont syst}(ii) uses Lemma \ref{Basic properties of D} which follows from (\ref{thrd pro of c}).} So $\langle \delta_{\gamma+i}\mid i\in\omega\rangle$ is $\Sigma_1$-definable in $L_{\alpha_{\gamma}}[A\cap\gamma, C\cap\gamma]$ from $(A\cap\gamma, C\cap\gamma)$. Since $\langle \delta_{\gamma+i}\mid i\in\omega\rangle$ is $\Sigma_1$-definable in $L_{\lambda}[x]$ from $(A\cap\gamma, C\cap\gamma)$ and $(A\cap\gamma,  C\cap\gamma)\in L_{\lambda}[x]$, by (\ref{fact on admissibility}), $\langle \delta_{\gamma+i}\mid i\in\omega\rangle\in L_{\lambda}[x]$. Note that $\omega\cdot\gamma=\gamma$ and $z_{\gamma}=\{i\in\omega\mid \omega\cdot\gamma+i\in B^{\ast}\}=\{i\in\omega\mid |x\cap\delta_{\gamma+i}|<\omega\}$. By (\ref{fact on admissibility}), $z_{\gamma}\in L_{\lambda}[x]$ and hence $\gamma<\theta$. By the definition of $\gamma, \eta^{\gamma}_0\leq \gamma$. Contradiction.

Subcase 2: $\lambda<\alpha_{\gamma}$. Since $A\cap\gamma\in L_{\lambda}[x]$, by (\ref{fact on admissibility}),  $\lambda$ is $A\cap\gamma$-admissible. Since $\gamma \in Lim(C)$ and $\gamma\leq\lambda<\alpha_{\gamma}$, by (\ref{sec pro of c}), $\lambda$ is an $L$-cardinal.
\end{proof}

So $L_{\alpha_{\eta^{\ast}}}[A\cap\eta^{\ast},C\cap\eta^{\ast}][x]\models Z_3\, + \, {\sf HP}$ and we have proved  The Main Theorem \ref{the main thm of the thesis}.\footnote{To define an almost disjoint system on $\omega$, we usually use the reshaping technique. However, in our proof we did not use reshaping and instead we use properties of $Lim(C)$ to define the almost disjoint system.} As a corollary, $Z_3\, + \,  {\sf HP}$ does not imply $0^{\sharp}$ exists.\footnote{From \cite{Schindler2}, any remarkable cardinal is remarkable in $L$.}

\subsection{Conclusion}\label{final section} 

We give an outline of our proof of The Main Theorem \ref{the main thm of the thesis}. In Step One, we force over $L$ to get a club in $\omega_2$ of $L$-cardinals with the strong reflecting property. This is necessary to show in Step Two that (\ref{key resut in section two}) holds. In Step Two, we find some $B_0\subseteq\omega_2$ and $A\subseteq\omega_1$ such that (\ref{key resut in section two}) holds in $L[B_0, A]$. (\ref{key resut in section two}) motivates the  definition of $S$ and is necessary to show that $S$ as defined in (\ref{def of S}) contains a club in $\omega_1$ and hence is stationary. In Step Three, we shoot a club  $C$ through $S$ via Baumgartner's forcing such that (\ref{thrd pro of c}) holds.  (\ref{thrd pro of c}) will be used to define the almost disjoint system and show that the generic real via almost disjoint forcing satisfies {\sf HP}. In Step Four, we use properties of $Lim(C)$(Lemma \ref{Basic properties of S} and Lemma \ref{Basic properties of D}) to define the almost disjoint system  on $\omega$ and some $B^{\ast}\subseteq \omega_1$. Then we do almost disjoint forcing to code $B^{\ast}$ by a real $x$.  Finally, we use properties of $Lim(C)$((\ref{first pro of c}), (\ref{sec pro of c}) and (\ref{thrd pro of c})) to show that $x$ is the witness real for {\sf HP}. 

From the proof of The Main Theorem \ref{the main thm of the thesis}, if we can force  a club in $\omega_2$ of $L$-cardinals with the weakly reflecting property via set forcing, then we can force  a set model of $Z_3\, + \, {\sf HP}$ via set forcing without reshaping. In our proof, the hypothesis ``there exists a remarkable cardinal with a weakly inaccessible cardinal above it" is  only used in Step One to force a club in $\omega_2$ of $L$-cardinals with the weakly reflecting property. 

We give a remark about the amount of the strong reflecting property  needed in our proof. For our proof, we need that $\omega_2$ has the strong reflecting property. Only knowing that some $\gamma\in [\omega_1, \omega_2)$ has the strong reflecting property is not enough for our proof. From this observation, only assuming one remarkable cardinal is not enough for our proof.

\begin{acknowledgement}
This paper is based on part of the author's
Ph.D.\ thesis written in 2012 at the National University of Singapore under the supervision of Chong Chi Tat and W.Hugh Woodin. I would like to express my deep gratitude to W.Hugh Woodin for all his support and guidance on the thesis. I would like to thank members of my Ph.D. committee. I would like to thank Ralf Schindler for  pointing out that Step One can be done by assuming a weaker large cardinal: a remarkable cardinal. I would like to thank  referees for their careful reading and helpful comments. 
\end{acknowledgement}

\end{document}